\documentclass{amsart}

\usepackage{fullpage,enumerate}

\usepackage{amsmath,amsxtra,amsthm,amssymb,xr,enumerate}
\usepackage[all,cmtip]{xy}
\newtheorem{theorem}{Theorem}[section]
\newtheorem{lem}[theorem]{Lemma}
\newtheorem{prop}[theorem]{Proposition}
\newtheorem{corollary}[theorem]{Corollary}
\newtheorem{defn}[theorem]{Definition}

\newtheorem{remark}[theorem]{Remark}

\numberwithin{equation}{section}

\newcommand{\tE}{\tilde{\mathbb{E}}}

\newcommand{\calH}{\mathcal{H}}

\newcommand{\calO}{\mathcal{O}}
\newcommand{\calL}{\mathcal{L}}
\newcommand{\fM}{\mathfrak{M}}
\newcommand{\QTr}{Q_{T,r}}
\newcommand{\T}{\mathfrak{T}}
\newcommand{\DD}{\mathbb D}
\newcommand{\NN}{\mathbb N}
\newcommand{\BB}{\mathbb B}

\renewcommand{\AA}{\mathbb A}
\newcommand{\ZZ}{\mathbb{Z}}
\newcommand{\QQ}{\mathbb{Q}}
\newcommand{\Zp}{\ZZ_p}
\newcommand{\Cp}{\mathbb{C}_p}
\newcommand{\Qp}{\QQ_p}
\newcommand{\AQp}{\AA_{F}^+}

\newcommand{\vp}{\varphi}
\newcommand{\Brig}{\BB^+_{F,\rm rig}}
\newcommand{\Bcris}{\BB_{\rm cris}}
\newcommand{\BdR}{\BB_{\rm dR}}

\newcommand{\Dcris}{\DD_{\mathrm{cris}}}
\newcommand{\Acris}{\AA_{\mathrm{cris}}}

\newcommand{\ft}{\mathfrak{t}}
\newcommand{\fs}{\mathfrak{c}}
\newcommand{\fc}{\mathfrak{u}}

\newcommand{\tfc}{\tilde{\mathfrak{u}}}

\newcommand{\vs}{\vspace{6pt}}
\DeclareMathOperator{\Gal}{Gal}
\DeclareMathOperator{\ord}{ord}

\DeclareMathOperator{\Iw}{Iw}
\DeclareMathOperator{\Tr}{Tr}
\DeclareMathOperator{\Fil}{Fil}
\DeclareMathOperator{\cor}{cor}
\DeclareMathOperator{\rank}{rank}
\DeclareMathOperator{\pr}{pr}
\DeclareMathOperator{\Tw}{Tw}

\DeclareMathOperator{\GL}{GL}
\DeclareMathOperator{\Tam}{Tam}
\newcommand{\HIw}{H^1_{\Iw}}

\begin{document}

\title[Tamagawa numbers over cyclotomic extensions]{Bounds on the Tamagawa numbers of a crystalline representation over towers of cyclotomic extensions}

\begin{abstract}
In this paper, we study the Tamagawa numbers of a crystalline representation over a tower of cyclotomic extensions under certain technical conditions on the representation. In particular, we show that we may improve the asymptotic bounds given in the thesis of Arthur Laurent in certain cases.
\end{abstract}

\author{Antonio Lei}
\address{Antonio Lei\newline
D\'epartement de Math\'ematiques et de Statistique\\
Universit\'e Laval, Pavillion Alexandre-Vachon\\
1045 Avenue de la M\'edecine\\
Qu\'ebec, QC\\
Canada G1V 0A6}
\email{antonio.lei@mat.ulaval.ca}
\thanks{The author's research is supported by the Discovery Grants Program 05710 of the Natural Sciences and Engineering Research Council of Canada}

\subjclass[2010]{11F80 (primary); 11R18, 11F11, 11F85, 11R23 (secondary).}
\keywords{Tamagawa numbers, cyclotomic extensions, $p$-adic representations, $p$-adic Hodge theory, Wach modules, modular forms.}
\maketitle

\section{Introduction}
In \cite{blochkato}, Bloch and Kato defined the local Tamagawa number of a general motive. Furthermore, they formulated a conjecture relating the Tamagawa number to the size of the Shafarevich-Tate group, generalizing works of Birch and Swinnerton-Dyer on elliptic curves.

Fix an odd prime $p$. Let $T$ be a crystalline $\Zp$-representation of $G_F$, where $F$ is a finite unramified extension of $\Qp$.  We write $V=T\otimes\Qp$. The local Tamagawa number of $T$ at a prime outside $p$ is relatively well understood (see for example \cite{fontaineperrinriou94}). In this article, we study the local Tamagawa number of $T$ at $p$ over a tower of cyclotomic extensions of $F$. More specifically, we give asymptotic bounds on $\Tam_{F_n,\omega_n}(T)$ for $n\ge1$, where $F_n=F(\mu_{p^n})$ and $\omega_n$ is a basis of $\det_{\Zp}\left(\calO_{F_n}\otimes_{\calO_F}\Dcris(T)/\Fil^0\Dcris(T)\right)$, with $\Dcris(T)$ being the Dieudonn\'e module of $T$. We show in \S\ref{S:tamagawa} that under some technical hypotheses on $T$, we have the inequality
\begin{equation}
\label{eq:bound}
\Tam_{F_n,\omega_n}(T)\le \frac{p^{(r(p^{n-1}+p-2)+s_1(p-1)+s_2[F_n:F])d'}}{\det_{\Zp}(1-\vp|\Dcris(V))_{(p)}},
\end{equation}
for all $n\ge1$. Here, $d'$ is the dimension of $\Dcris(V)/\Fil^0\Dcris(V)$, $r$, $s_1$ and $s_2$ are some constants that depend only on the integrality of the Frobenius action $\vp$ on $\Dcris(V)$ and $(\star)_{(p)}$ denotes $p^{\ord_p(\star)}$.

The structure of the paper is as follows. We shall first of all introduce some notation and review some standard results from $p$-adic Hodge Theory in \S\ref{S:notation}. We then reprove a result of Perrin-Riou in \S\ref{S:Eg}, which is one of the key ingredient of her construction of the big exponential map in \cite{perrinriou94}. This result will be needed to analyse the denominators of the Bloch-Kato exponential map. In \S\ref{S:wach}, we review some results of Wach modules from \cite{berger04} and use them to prove a relation between crystalline classes over cyclotomic extensions and power series. Furthermore, we shall analyse the denominators of these power series. In \S\ref{S:tamagawa}, we combine these results to prove \eqref{eq:bound}. We note that in Arthur Laurent's thesis \cite{laurent}, a different asymptotic bound is given. We shall discuss how the two bounds compare in \S\ref{S:compare}. In particular, if $f=\sum a_nq^n$ is a modular form of even weight $k$ and that $\ord_p(a_p)\ge k/2$, it is possible to work out the constants $r$, $s_1$ and $s_2$ explicitly and that \eqref{eq:bound} gives
\[
\Tam_{F_n,\omega_n}(T_f(k/2))\le p^{\left(k/2(p-1)(p^{n-1}+1)-p^{n-1}(p-2)\right)e},
\]
where $T_f$ is the two dimensional representation of Deligne associated to $f$ and $e$ is the degree of the smallest extension of $\Qp$ that contains all $a_n$.

Finally, there are two appendices to this paper. The first one contains some elementary results on $p$-adic valuations of elements in $F_n$ and their relations to cyclotomic polynomials. The second one is a discussion on a consequence of our results in \S\ref{S:Eg} on $Q$-systems, which are important objects in the study of $p$-adic Gross-Zagier formulae in \cite{kobayashi13,kobayashi14,ota14}.

\section{Notation}\label{S:notation}
\subsection{Iwasawa algebra and power series}
Let $p$ be an odd prime, with $v_p$ the normalized $p$-adic valuation on $\overline{\Qp}$ with $v_p(p)=1$. We fix $F$ a finite unramified extension of $\Qp$.
 It is equipped with the Frobenius automorphism $\sigma$. For $1\le n\le \infty$, we denote the extension $F(\mu_{p^n})$ by $F_n$.  We write $\Gamma$ for the Galois group $\Gal(F_\infty/F)$. We have the cyclotomic character $\chi$ on the absolute Galois group $G_F$ of $F$. Fix a topological generator $\gamma$ of $\Gal(F_\infty/F_1)$ and write $u=\chi(\gamma)\in1+p\Zp$.
 
Let $\Lambda_F=\calO_F[[\Gamma]]=\calO_F[\Delta][[\gamma-1]]$ be the Iwasawa algebra of $\Gamma$ over $\calO_F$. We define  $\calH_F$ to be the set of elements in $\Qp[\Delta][[\gamma-1]]$ that converge when we replace $\gamma-1$ by any elements in the open $p$-adic unit disk.
 
Let $m\in\ZZ$ and $I\subset\ZZ$ a finite subset. For an integer $n\ge0$, define $\omega_{n,m}(\gamma)=(u^{-m}\gamma)^{p^n}-1$, $\omega_{n,I}(\gamma)=\prod_{m\in I}\omega_{n,m}(\gamma)$, $\Phi_{n,m}(\gamma)=\Phi_{p^n}(u^{-m}\gamma)$ and $\Phi_{n,I}(\gamma)=\prod_{m\in I}\Phi_{n,m}(\gamma)$, where $\Phi_{p^n}$ is the $p^n$-th cyclotomic polynomial. We also introduce the following notation of $p$-adic logarithms
\[
\ell_{m}=\frac{\log(\gamma)}{\log(u)}-m=\frac{\log(u^{-m}\gamma)}{\log(u)}.
\]

 Given $f\in \calH_F$ and $g\in \Qp[[\gamma-1]]$. We say that $g$ is divisible by $f$ (written $g|f$) if each $\Delta$-isotypic component of $f$ is divisible by $g$ over $\Qp[[\gamma-1]]$.

We define $\Acris$, $\Bcris$ and $\BdR$ to be the usual Fontaine rings (definitions can be found in \cite{berger03,berger04} for example). Recall that the ring $\tE^+=\displaystyle\varprojlim_{x\mapsto x^p} \calO_{\Cp}$  is equipped with the operator $\vp$  defined by $x\mapsto x^p$ and that $\vp$  is invertible on $\tE^+$. We fix an element $\epsilon=(1,\zeta_p,\zeta_{p^2},\ldots)\in\tE^+$, where $\zeta_{p^n}$ is a primitive $p^n$-th root of unity. For an integer $n$, we write $\epsilon_n=[\vp^{-n}(\epsilon)]\in W(\tE^+)$. Here, $W(\tE^+)$ is the ring of Witt vectors with coefficients in $\tE^+$ and $[\bullet]$ denotes the Teichmuller lift. Note that, when $n\ge0$, we have 
\begin{equation}\label{eq:epsilon}
\epsilon_{-n}=\epsilon_0^{p^n}\in 1+p^nW(\tE^+)
\end{equation}
since the first $n$ coordinates of $\epsilon^{p^n}$ are all $1$.

Let $\pi=[\epsilon_0]-1$. Define $\AQp=\calO_F[[\pi]]$, which we shall identify with the set of power series $\calO_F[[X]]$. Given $f=\sum a_m\pi^m$, we write $f^{\sigma^n}$ to be $\sum\sigma^n(a_m)\pi^m$ for $n\in\ZZ$. The action of $\vp:\pi\mapsto (1+\pi)^p-1$ is extended $\sigma$-linearly to $\AQp$.  In other words, if $F=\sum a_m\pi^m\in\AQp$, then $\vp(F)=\sum \sigma(a_m)((1+\pi)^p-1)^m$. Further, there is a standard left inverse $\psi$ of $\vp$ on $\AQp$, which sends $(1+\pi)^i$ to $(1+\pi)^{i/p}$ if $p|i$ and $0$ otherwise.

 We define $\Brig$ to be the set of power series in $F[[\pi]]$ that converge on the open unit disk. We have $t=\log(1+\pi)\in\Brig$. The operators $\vp$ and $\psi$ on $\AQp$ extend naturally to $\Brig$. For an integer $m\ge0$, we  have the operator on $(\Brig)^{\psi=0}$ defined by $\nabla_m=t\frac{d}{dt}-m=t\partial-m$, where $\partial=(1+\pi)\frac{d}{d\pi}$. We note that the operator $\partial$ acts bijectively on $(\AQp)^{\psi=0}$ as well as $(\Brig)^{\psi=0}$.

We have an isomorphism of $\Lambda_F$-modules (the Mellin transform)
\begin{align*}
\fM_F:\calH_F&\stackrel{\sim}{\longrightarrow}(\Brig)^{\psi=0}\\
1&\mapsto1+\pi,
\end{align*}
where the image of $\Lambda_F$ is $(\AQp)^{\psi=0}$. If $f\in\calH_F$, then
\begin{equation}\label{eq:twisting}
\fM_F\left(\ell_m\cdot f\right)=\nabla_m\circ\fM_F(f).
\end{equation}

\begin{theorem}
Let $f\in\calH_F$ and $n\ge1$ an integer. We have $\Phi_{n-1}(\gamma)|f$ over $\calH_F$ if and only if $\fM_F(f)\in\Phi_n(1+\pi)(\Brig)^{\psi=0}$.
\end{theorem}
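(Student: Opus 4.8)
The plan is to push everything through the Mellin transform and reduce to an equality of submodules of $(\Brig)^{\psi=0}$. Since $\fM_F$ is an isomorphism of $\calH_F$-modules which intertwines multiplication on $\calH_F$ with the $\Gamma$-action on $(\Brig)^{\psi=0}$, the condition $\Phi_{n-1}(\gamma)\mid f$ over $\calH_F$ is equivalent to $\fM_F(f)$ lying in the image $\Phi_{n-1}(\gamma)\cdot(\Brig)^{\psi=0}$ of the operator obtained by letting the polynomial $\Phi_{n-1}$ act through $\gamma\in\Gamma$. (Here one uses that a convergent power series divisible, $\Delta$-isotypic-component by component, by a cyclotomic polynomial in $\gamma-1$ — all of whose roots lie in the open unit disc — has a convergent quotient, so divisibility over $\calH_F$ and over $\Qp[[\gamma-1]]$ amount to the same thing, namely to $f\in\Phi_{n-1}(\gamma)\calH_F$.) It therefore suffices to prove the identity of submodules $\Phi_{n-1}(\gamma)\cdot(\Brig)^{\psi=0}=\Phi_n(1+\pi)(\Brig)^{\psi=0}$; note the left-hand side is automatically contained in $(\Brig)^{\psi=0}$ since the $\Gamma$-action commutes with $\psi$.

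For the inclusion $\subseteq$, take $g\in(\Brig)^{\psi=0}$ and I would first show $\Phi_{n-1}(\gamma)\cdot g$ is divisible by $\Phi_n(1+\pi)$ in $\Brig$. As the zeros of $\Phi_n(1+\pi)$ are the simple zeros $1+\pi=\zeta$ at the relevant $p$-power roots of unity, all lying in the open unit disc, it is enough to evaluate: specialization $1+\pi\mapsto\zeta$ is a ring homomorphism under which $\gamma$ acts by $1+\pi\mapsto(1+\pi)^u$, so $(\gamma^k g)(\zeta-1)=g(\zeta^{u^k}-1)$, and since $u^{p^{n-1}}\equiv 1\pmod{p^n}$ the powers of $u$ occurring in $\Phi_{n-1}(\gamma)$ carry $\zeta$ into (or fix, when $n=1$) the $\mu_p$-orbit of $\zeta$. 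Writing $g=\sum_i a_i(1+\pi)^i$ with $a_i=0$ whenever $p\mid i$ — this is exactly the condition $\psi(g)=0$ — the value $(\Phi_{n-1}(\gamma)g)(\zeta-1)$ collapses to a sum $\sum_{\xi\in\mu_p}g(\zeta\xi-1)=p\sum_{p\mid i}a_i\zeta^i=0$. Hence $\Phi_{n-1}(\gamma)g=\Phi_n(1+\pi)h$ with $h\in\Brig$. To see $\psi(h)=0$, use that $\Phi_n(1+\pi)=\vp^{n-1}\!\bigl(((1+\pi)^p-1)/\pi\bigr)$ is $\vp$ of a lower-level element, so in $\psi(\Phi_n(1+\pi)h)$ one can pull off a $\vp$ via the identity $\psi(\vp(a)x)=a\psi(x)$, reducing to $\psi(\Phi_{n-1}(\gamma)g)=\Phi_{n-1}(\gamma)\psi(g)=0$; comparing the two expressions forces $\psi(h)=0$.

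For the reverse inclusion $\supseteq$, suppose $\fM_F(f)=\Phi_n(1+\pi)h$ with $h\in(\Brig)^{\psi=0}$, so $\fM_F(f)$ vanishes at all the relevant roots of unity. Reading $\fM_F$ as the Amice transform, so that the coefficients $a_i$ of $\fM_F(f)$ are the moments of the distribution $f$ on $\Gamma$, the value of $f$ at a Dirichlet character $\eta$ of $p$-power conductor is a finite Fourier sum of the values $\fM_F(f)(\zeta-1)$ over roots of unity $\zeta$ (the Gauss-sum formula, the non-primitive contributions dropping out as sums over $\mu_p$-cosets). Thus $f(\eta)=0$ for every character of conductor dividing $p^n$ of the type annihilated by $\Phi_{n-1}(\gamma)$, i.e.\ every $\Delta$-isotypic component of $f$ is divisible by $\Phi_{n-1}(\gamma)$, as required. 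I expect the main obstacle to be the $\psi$-bookkeeping in the first inclusion — keeping the quotient $h$ inside $(\Brig)^{\psi=0}$ and not merely in $\Brig$ — together with the boundary case $n=1$, where $\Phi_n(1+\pi)$ is no longer literally of the form $\vp(\cdot)$ and the $\psi$-argument (or the polynomial itself, replacing it by $(1+\pi)^{p^n}-1$) has to be adjusted.
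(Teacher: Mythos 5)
The paper offers no internal proof of this statement --- the body of the proof is simply the citation \cite[Theorem~5.4]{leiloefflerzerbes10} --- so there is no argument in the text to compare against; your attempt is a genuine from-scratch proof. For $n\ge 2$ your strategy is sound and essentially complete: push everything through $\fM_F$, show $\Phi_{n-1}(\gamma)g$ vanishes at $\pi=\zeta-1$ for every primitive $p^n$-th root of unity $\zeta$ (the collapsing step should be stated as $\sum_{\xi\in\mu_p}g(\xi(1+\pi)-1)=p\,\vp\psi(g)=0$, since $g\in\Brig$ is a power series in $\pi$ and cannot literally be written as a finite sum $\sum_i a_i(1+\pi)^i$), control $\psi$ of the quotient via $\Phi_n(1+\pi)=\vp(\Phi_{n-1}(1+\pi))$ and $\psi(\vp(a)x)=a\psi(x)$, and recover the converse by interpolating the values $\fM_F(f)(\zeta-1)$ back to $f(\eta)$ for characters $\eta$ of conductor $p^n$ and applying Weierstrass preparation componentwise.

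The genuine gap is the $n=1$ case, which you flag but do not resolve, and which is not a mere bookkeeping adjustment: the submodule identity $\Phi_{0}(\gamma)(\Brig)^{\psi=0}=\Phi_1(1+\pi)(\Brig)^{\psi=0}$ you reduce to is \emph{false}. On one side, writing $q=\Phi_1(1+\pi)=1+(1+\pi)+\cdots+(1+\pi)^{p-1}$, one already has $\psi\bigl(q\cdot(1+\pi)\bigr)=\psi\bigl((1+\pi)+\cdots+(1+\pi)^{p}\bigr)=1+\pi\ne 0$, so $\Phi_1(1+\pi)(\Brig)^{\psi=0}\not\subset(\Brig)^{\psi=0}$, whereas the left-hand side is contained in $(\Brig)^{\psi=0}$ as you note. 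On the other side the forward containment also fails concretely: take $p=3$ and $g=(1+\pi)^2\in(\AQp)^{\psi=0}$; then
\[
(\gamma-1)g=(1+\pi)^{2u}-(1+\pi)^2=q\,h,\qquad h=\pi(1+\pi)^2\,\vp\!\left(\frac{(1+\pi)^{2(u-1)/p}-1}{\pi}\right),
\]
and $\psi(h)=\psi\bigl(\pi(1+\pi)^2\bigr)\cdot\frac{(1+\pi)^{2(u-1)/p}-1}{\pi}\ne 0$ because $\psi(\pi(1+\pi)^2)=\psi\bigl((1+\pi)^3\bigr)=1+\pi$. The correct target for $n=1$ is $\vp(\pi)(\Brig)^{\psi=0}=\Phi_1(1+\pi)\Brig\cap(\Brig)^{\psi=0}$ --- compare Lemma~\ref{lem:ker} of the paper --- which is what your parenthetical remark about replacing $\Phi_n(1+\pi)$ by $(1+\pi)^{p^n}-1$ hints at, but as written this case is left unproven, so the proposal does not close.
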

\begin{proof}
This is \cite[Theorem~5.4]{leiloefflerzerbes10}.
\end{proof}

\begin{corollary}\label{cor:mellin}
Let $f\in\calH_F$ and $m,n\ge1$ integers. We have $\Phi_{n-1,m}(\gamma)|f$ over $ 
\calH_F$ if and only if $\partial^m\fM_F(f)\in\Phi_n(1+\pi)(\Brig)^{\psi=0}$.
\end{corollary}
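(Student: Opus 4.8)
The plan is to deduce the corollary from the Theorem by a twisting argument. For $m\in\ZZ$, let $\Tw_m$ be the $\calO_F$-algebra automorphism of $\calH_F$ determined on group-like elements by $g\mapsto\chi(g)^m g$; concretely it sends $\gamma\mapsto u^m\gamma$ and $\delta\mapsto\chi(\delta)^m\delta$ for $\delta\in\Delta$. This is a well-defined continuous automorphism of $\calH_F$: in the variable $\gamma-1$ it acts by the substitution $\gamma-1\mapsto u^m\gamma-1=u^m(\gamma-1)+(u^m-1)$, which preserves the open unit disk because $u^m-1\in p\Zp$, while on $\calO_F[\Delta]$ it permutes the idempotents, sending $e_\eta$ to $e_{\eta\chi^{-m}}$. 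Consequently $\Tw_m$ maps the $\eta$-isotypic component of $\calH_F$ isomorphically onto the $\eta\chi^{-m}$-isotypic component, and under the natural identification of these components as rings of convergent power series in $\gamma-1$ this isomorphism is the substitution $\gamma\mapsto u^m\gamma$. Since $\Phi_{n-1,m}(\gamma)=\Phi_{p^{n-1}}(u^{-m}\gamma)$ carries no $\Delta$-part and $\gamma\mapsto u^m\gamma$ sends it to $\Phi_{p^{n-1}}(\gamma)=\Phi_{n-1}(\gamma)$, it follows that, for every $f\in\calH_F$, $\Phi_{n-1,m}(\gamma)\mid f$ over $\calH_F$ if and only if $\Phi_{n-1}(\gamma)\mid\Tw_m(f)$ over $\calH_F$.

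The main computation is the intertwining identity $\fM_F\circ\Tw_m=\partial^m\circ\fM_F$ on $\calH_F$. To prove it, recall that $\fM_F(g\cdot1)=(1+\pi)^{\chi(g)}$ for a group-like element $g$, and that $\partial\big((1+\pi)^c\big)=c(1+\pi)^c$ for $c\in\Zp$; hence $\partial^m\fM_F(g\cdot1)=\chi(g)^m(1+\pi)^{\chi(g)}=\fM_F\big(\Tw_m(g\cdot1)\big)$. By $\calO_F$-linearity the identity holds on $\Lambda_F$, and then on all of $\calH_F$ by continuity of $\partial$, $\fM_F$, $\Tw_m$ and density. (Alternatively one establishes the case $m=1$ and iterates.)

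Combining the two paragraphs and applying the Theorem to $\Tw_m(f)\in\calH_F$ gives, for $f\in\calH_F$ and $m,n\ge1$,
\begin{align*}
\Phi_{n-1,m}(\gamma)\mid f
&\ \Longleftrightarrow\ \Phi_{n-1}(\gamma)\mid\Tw_m(f)\\
&\ \Longleftrightarrow\ \fM_F\big(\Tw_m(f)\big)\in\Phi_n(1+\pi)(\Brig)^{\psi=0}\\
&\ \Longleftrightarrow\ \partial^m\fM_F(f)\in\Phi_n(1+\pi)(\Brig)^{\psi=0},
\end{align*}
which is the assertion. The step I expect to require the most care is the first equivalence above: $\Tw_m$ does not fix the individual $\Delta$-isotypic components but permutes them, so one must check that this permutation is compatible with the component-wise definition of divisibility. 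This works out precisely because $\Phi_{n-1,m}(\gamma)$ and $\Phi_{n-1}(\gamma)$ carry no $\Delta$-part and the relevant substitution $\gamma\mapsto u^m\gamma$ is the same on every component.
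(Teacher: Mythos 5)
Your argument is correct and self-contained, which is more than the paper provides: the paper's ``proof'' is just a pointer to the proof of Lemma~5.9 in the cited Lei--Loeffler--Zerbes paper, so there is nothing written here to compare against. The twisting argument you give is the natural route, and in fact matches the spirit of the surrounding material. The intertwining identity $\fM_F\circ\Tw_m=\partial^m\circ\fM_F$ is correctly established: it follows from $\Lambda_F$-linearity of $\fM_F$, the computation $\partial\bigl((1+\pi)^c\bigr)=c(1+\pi)^c$ for $c\in\Zp$, and density of $\Lambda_F\otimes\Qp$ in $\calH_F$ together with continuity of all maps involved. You have also correctly identified and resolved the only genuinely delicate point, namely that $\Tw_m$ permutes the $\Delta$-isotypic components while divisibility in $\calH_F$ is tested component-wise; this is harmless precisely because $\Phi_{n-1,m}(\gamma)$ and $\Phi_{n-1}(\gamma)$ lie in $\Qp[[\gamma-1]]$ (no $\Delta$-part) and the substitution $\gamma\mapsto u^m\gamma$ is identical on every component. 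One cosmetic remark: the automorphism you call $\Tw_m$ on $\calH_F$ collides notationally with the map $\Tw_m$ on $(\Brig)^{\psi=0}\otimes\Dcris(T)$ introduced in \S\ref{S:crystalline}; these are compatible under $\fM_F$ up to signs/inverses but are not literally the same object, so a different symbol would avoid confusion.
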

\begin{proof}
This follows from \cite[proof of Lemma~5.9]{leiloefflerzerbes10}.
\end{proof}

\subsection{Crystalline representations and  Wach modules}\label{S:crystalline}
Let $T$ be a crystalline rank-$d$ $\Zp$-representation of $G_F$ and $V=T\otimes_{\Zp}\Qp$. We assume that $T$ has Hodge-Tate weights in $[a;b]$ with  $b\ge1$. Furthermore, we shall assume the following Fontaine-Laffaille condition from \cite{fontainelaffaille} holds throughout.

\vs

\noindent
\underline{\textbf{(H.FL)}} $b-a\le p-1$.

\vs

We define $\HIw(F,T)$ to be the inverse limit $
\varprojlim H^1(F_n,T)
$, where the connecting map is the corestriction map. For $m\ge n\ge0$, we write $\cor_{m/n}$ for the corestriction map from $H^1(F_m,T)$ to $H^1(F_n,T)$. Let
\[
\pr_{T,n}:\HIw(F,T)\longrightarrow H^1(F_n,T)
\]
denote the natural projection.

Let $m\in\ZZ$. We have the Tate twist $T(m)=T\otimes \Zp e_m$, where $G_F$ acts on  $\Zp e_m$ via the character $\chi^m$. We have a natural isomorphism
\[
\HIw(F,T)\stackrel{\cdot e_m}{\longrightarrow}\HIw(F,T(m)).
\]

We write $\Dcris(T)$ for the Dieudonn\'e module of $T$. We assume the following hypothesis holds.

\vs

\noindent
\underline{\textbf{(H.eigen)}} None of the eigenvalues of $\vp$ on $\Dcris(T)$ is an integral power of $p$.

\vs

Note in particular that (H.eigen) implies that $T(m)^{G_{F_\infty}}=0$ for all $m\in\ZZ$. We have the identification
\begin{equation}\label{eq:twistDieudonne}
\Dcris(T(m))=\Dcris(T)\cdot t^{-m}e_m.
\end{equation}
We write $\Tw_m$ for the bijection
\begin{align*}
\Tw_m:(\Brig)^{\psi=0}\otimes\Dcris(T)&\rightarrow(\Brig)^{\psi=0}\otimes\Dcris(T(m))\\
f\otimes v&\mapsto \partial^{-m}f\otimes v\cdot t^{-m}e_m.
\end{align*}
Here, the tensor product is taken over $\calO_F$. We shall abuse notation and write $\vp$ for the diagonal map $\vp\otimes\vp$ on $\Brig\otimes\Dcris(T)$.

\begin{remark}\label{rk:commute}It can be checked that $\Tw_m$ commutes with the actions of  $\vp=\vp\otimes\vp$ on $(\Brig)^{\psi=0}\otimes\Dcris(T)$ and  $(\Brig)^{\psi=0}\otimes\Dcris(T(m))$ since $\partial^m\vp=p^m\vp\partial^m$ and $\vp(t^{-m})=p^{-m}t^{-m}$.\end{remark}

The tangent space of $V$ over $F_n$ is defined to be
\[
t_n(V):=F_n\otimes \Dcris(T)/\Fil^0\Dcris(T).
\]
When $n=0$, we simply write $t(V)=F\otimes \Dcris(T)/\Fil^0\Dcris(T)$.
Recall that the Bloch-Kato exponential map 
\[
\exp_{V,n}:t_n(V)\oplus\Dcris(T){\longrightarrow} H^1_f(F_n,V)
\]
is defined as the connecting map of the short exact sequence
\begin{equation}
0\rightarrow \Qp\rightarrow \Bcris\rightarrow\Bcris\oplus\BdR/\BdR^+\rightarrow0
\label{eq:exact}
\end{equation}
and $H^1_f(F_n,V)$ can be realized as
\[
\ker\left(H^1(F_n,V)\rightarrow H^1(F_n,V\otimes\Bcris)\right).
\]
We write $\exp^*_{V,n}:H^1(F_n,V)/H^1_f(F_n,V)\longrightarrow F_n\otimes\Fil^0\Dcris(V)$ for the dual map of $\exp_{V,n}$.
Note that under (H.eigen), we in fact have
\[
H^1_e(K,T)=H^1_f(K,T)=H^1_g(K,T)
\]
for all finite extension $K$ of $F$ (see \cite[\S3.11]{blochkato}).

We write  $\NN(T)$ for the Wach module of $T$ (see \cite[\S A]{berger03} or \cite[\S II]{berger04} for a definition). We recall that there is an identification
\begin{equation}\label{eq:twistWach} 
\NN(T(m))=\NN(T)\otimes \pi^{-m}e_m
\end{equation}
and that there is an isomorphism (\cite[Theorem A.3]{berger03})
\begin{equation} \label{eq:Herr}
h^1_T:\DD(T)^{\psi=1}=(\pi^a\NN(T))^{\psi=1}\stackrel{\sim}{\longrightarrow}\HIw(F,T).
\end{equation}
We may extend this map to
\[
h^1_T:\DD_{\rm rig}(V)^{\psi=1}\stackrel{\sim}{\longrightarrow}\calH_F\otimes\HIw(F,T),
\]
where $\DD_{\rm rig}(V)$ is the $(\vp,\Gamma)$-module over the Robba ring associated to $V$.
For an integer $n\ge0$, we shall write $h^1_{T,n}$ for the map $h^1_T$ composed with the natural projection $\pr_{T,n}$.

If $x\in F_n((t))\otimes\Dcris(T)$, we define $\partial_T(x)\in F_n\otimes\Dcris(T)$ to be the constant term (coefficient of $t^0$) of $x$. Recall that we have the map $\vp^{-n}:\Brig\rightarrow F_n[[t]]$ sending $\pi$ to $\zeta_{p^n}\exp(t/p^n)-1$.

\begin{theorem}\label{thm:berger}
If $y\in\DD_{\rm rig}(V)^{\psi=1}\cap(\Brig[t^{-1}]\otimes\Dcris(T))$, then
\[
\exp^*_{V,n}\circ h^1_{T,n}(y)=p^{-n}\partial_T(\vp^{-n}(y))
\]
for all $n\ge1$. If $n=0$, then 
\[
\exp^*_{V,0}\circ h^1_{T,0}(y)=(1-p^{-1}\vp^{-1})\partial_T(y).
\]
\end{theorem}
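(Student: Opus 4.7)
The plan is to deduce this as a consequence of Berger's explicit computation of the Bloch--Kato dual exponential in terms of $(\vp,\Gamma)$-modules, combined with Herr's description of Iwasawa cohomology. The essential point is to match, on the one hand, the cocycle representative of $h^1_{T,n}(y)$ produced by the Herr complex, and on the other hand, the cocycle-lifting characterization of $\exp^*_{V,n}$ derived from the definition of the Bloch--Kato exponential.

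First, I unwind the definitions. For $y\in \DD_{\rm rig}(V)^{\psi=1}$, the Herr complex produces a continuous cocycle on $G_{F_n}$ whose class equals $h^1_{T,n}(y)$, constructed from the relation $(\vp-1)z=(\gamma-1)y$ for a suitable auxiliary $z$. Dually, the connecting map coming from \eqref{eq:exact} shows that for a class represented by a cocycle $\sigma\mapsto c_\sigma$, the element $\exp^*_{V,n}(c)$ is the image in $F_n\otimes\Fil^0\Dcris(V)$ of any $b\in V\otimes\BdR$ with $(\sigma-1)b=c_\sigma$ for all $\sigma\in G_{F_n}$.

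For $n\ge 1$, I would use the Galois-equivariant localization $\vp^{-n}\colon \Brig\to F_n[[t]]$, extended to $\DD_{\rm rig}(V)\to F_n[[t]]\otimes\Dcris(V)$. The assumption $y\in \Brig[t^{-1}]\otimes\Dcris(T)$ guarantees that $\vp^{-n}(y)$ lies in $F_n((t))\otimes\Dcris(V)\hookrightarrow V\otimes\BdR$, and a direct calculation shows that $p^{-n}\partial_T(\vp^{-n}(y))$, read modulo $\Fil^0$, is precisely the projection of a lift $b$ of the Herr cocycle. The factor $p^{-n}$ arises because $\vp$ acts on $t$ by multiplication by $p$, so the shift introduced by $\vp^{-n}$ contributes this normalization. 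For $n=0$, the Euler factor $(1-p^{-1}\vp^{-1})$ is produced by applying the $n=1$ formula and corestricting from $F_1$ to $F$: the compatibility $\pr_{T,0}=\cor_{1/0}\circ\pr_{T,1}$, together with an explicit computation of $\cor_{1/0}$ on $\DD(T)^{\psi=1}$-classes (which can be expressed in terms of $\vp$ via the Frobenius trace), yields exactly this factor.

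The principal obstacle is the cocycle-matching step: tracking the $\Gamma$- and $G_{F_n}$-actions under $\vp^{-n}$ and checking the $\BdR$-lifting equation requires delicate bookkeeping, as one has to thread a careful path through the period rings $\Bcris$ and $\BdR$ and the image of $\DD_{\rm rig}(V)$ under $\vp^{-n}$. This verification is the content of Berger's theorems in \cite{berger03} and \cite{berger04}, and in practice I would simply cite those results rather than reproduce the computation here.
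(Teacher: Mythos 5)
The paper's own proof is a one-line citation to Berger's Theorem~II.6 in \cite{berger03}, which is exactly where you land as well, so the approach is essentially the same. Your heuristic sketch is broadly in the right spirit, but note that the characterization of $\exp^*_{V,n}$ via a primitive $b\in V\otimes\BdR$ with $(\sigma-1)b=c_\sigma$ is not well-posed as stated (such a $b$ is only determined up to $\DD_{\rm dR}(V)$; the dual exponential is more naturally described via cup product with $\log\chi$), and the claim that the $n=0$ Euler factor drops out of corestriction from $F_1$ is not how Berger actually obtains it --- neither point causes a gap here, since you defer to Berger's result for the actual proof.
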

\begin{proof}
This is \cite[Theorem II.6]{berger03}
\end{proof}

\subsection{Perrin-Riou's exponential map}\label{S:PR}
Let $T$ be as above with both (H.FL) and (H.eigen) hold. Since $\Fil^{-b}\Dcris(T)=\Dcris(T)$, we have the Perrin-Riou exponential map  from \cite{perrinriou94}
\[
\Omega_{T,b}:(\Brig)^{\psi=0}\otimes\Dcris(T)\longrightarrow \calH_F\otimes\HIw(F,T).
\]
Note that $\Omega_{T,b}$ is a $\Lambda_F$-homomorphism. It is compatible under twisting, that is, we have the following commutative diagram:
\begin{equation}\label{eq:PRcommute}
\xymatrix{
      (\Brig)^{\psi=0}\otimes\Dcris(T)\ar[rrrr]^{\Omega_{T,b}}\ar[d]^{\Tw_m} &&&&\calH_F\otimes\HIw(F,T)\ar[d]^{\cdot e_m}\\
      (\Brig)^{\psi=0}\otimes\Dcris(T(m))\ar[rrrr]^{(-1)^{m}\Omega_{T(m),b+m}} &&&& \calH_F\otimes\HIw(F,T)
    }
\end{equation}
for any integer $m\ge 1-b$ (c.f. \cite[Th\'eor\`eme~3.2.3 (B.ii)]{perrinriou94}). If $g\in(\Brig)^{\psi=0}\otimes\Dcris(T)$, then by \cite[Proposition~2.2.1]{perrinriou94}, there exists a unique $G\in(\Brig\otimes\Dcris(T))^{\psi=1}$ such that $(1-\vp)G=g$ (see also \S\ref{S:Eg} below for a discussion on the solution to this equation).
By \cite[Theorem II.13]{berger03}, we can describe the Perrin-Riou exponential map via
\begin{equation}\label{eq:bergerexpo}
\Omega_{T,b}(g)=h^1_T\circ\nabla_{b-1}\circ\cdots\circ\nabla_0 (G).
\end{equation}

%By the explicit reciprocity law 
%\begin{equation}\label{eq:reciprocity}
%\calL_{T,b}(z)=\sum_{i=1}^d\left\langle z,\Omega_{T^*(1),1}((1+\pi)\otimes v_i')\right\rangle v_i
%\end{equation}
%where $v_1,\ldots,v_d$ is some basis of $\Dcris(T)$ with dual basis $v_1',\ldots,v_d'$ of $\Dcris(T^*(1))$.

\begin{defn}\label{defn:Sigman}
We define the maps
\begin{align*}
\Xi_{T,n}:(\AQp)^{\psi=0}\otimes\Dcris(T)\rightarrow& H^1_f(F_n,V)\\
g\mapsto&\exp_{V,n}\left( (p\otimes\vp)^{-n}G^{\sigma^{-n}}(\zeta_{p^n}-1)\right),\\
\Sigma_{T,n}:(\AQp)^{\psi=0}\otimes\Dcris(T)\rightarrow& H^1_f(F_n,V)\\
g\mapsto& p^{(b-1)n}\exp_{V,n}\left( G^{\sigma^{-n}}(\zeta_{p^n}-1)\right),\\
\Theta_{T,n}:(\AQp)^{\psi=0}\otimes\Dcris(T)\rightarrow& t_n(V),\\
g\mapsto& G^{\sigma^{-n}}(\zeta_{p^n}-1)\mod\Fil^0\Dcris(T),
\end{align*}
where $G\in(\Brig\otimes\Dcris(T))^{\psi=1}$ is such that $(1-\vp)g=G$ and $G^{\sigma^{-n}}$ denotes $(\sigma^{-n}\otimes 1)G$.
\end{defn}

 For $j\le b-1$ and $n\ge1$, we have the interpolation formula
\begin{equation}\label{eq:PRXi}
\pr_{T(-j),n}(\Omega_{T,b}(g)\cdot e_{-j})=(-1)^j(b-j-1)!\Xi_{T(-j),n}\circ\Tw_{-j}(g)
\end{equation}
by \cite[Th\'eor\`eme 3.2.3 (A)]{perrinriou94} (after changing the sign of $j$).

\begin{prop}\label{prop:IntegralPR}
There exists an integer $s$ such that the image of  $\Sigma_{T,n}$ lies inside $p^{-s}H^1_f(F_n,T)$ for all $n\ge1$. Furthermore, we have the relation \[\cor_{n+1/n}\Sigma_{T,n+1}(g)=p^b\Sigma_{T,n}(1\otimes \vp (g))\]
for all $g\in (\AQp)^{\psi=0}\otimes\Dcris(T)$ and $n\ge1$.
\end{prop}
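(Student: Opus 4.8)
The plan is to reduce everything to statements about the power series $G$ and the Perrin-Riou exponential map, and then transport integrality across the known maps. First I would observe that $\Sigma_{T,n}$ differs from the map appearing in the interpolation formula \eqref{eq:PRXi} (the $j=0$ case) only by the factor $p^{(b-1)n}$: indeed $\Xi_{T,n}(g)=\exp_{V,n}((p\otimes\vp)^{-n}G^{\sigma^{-n}}(\zeta_{p^n}-1))$ while $\Sigma_{T,n}(g)=p^{(b-1)n}\exp_{V,n}(G^{\sigma^{-n}}(\zeta_{p^n}-1))$, so the two differ by applying $\vp^n$ to the argument of $\exp_{V,n}$ together with an explicit power of $p$. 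Taking $j=0$ in \eqref{eq:PRXi} gives $\pr_{T,n}(\Omega_{T,b}(g))=(b-1)!\,\Xi_{T,n}(g)$, so $\Xi_{T,n}(g)$ lands in $\frac{1}{(b-1)!}\pr_{T,n}(\HIw(F,T))$, and since $\HIw(F,T)$ is a finitely generated $\Lambda_F$-module with $\Omega_{T,b}$ a $\Lambda_F$-homomorphism, the denominators of $\Omega_{T,b}$ on the integral lattice $(\AQp)^{\psi=0}\otimes\Dcris(T)$ are bounded independently of $n$. Combining this with the comparison between $\Sigma_{T,n}$ and $\Xi_{T,n}$ and the fact that $\vp$ preserves $\DD(T)=\pi^a\NN(T)$ up to a bounded power of $p$ (because $\vp(\pi^a\NN(T))\subseteq\pi^{a-b}\NN(T)$ under (H.FL), by Berger's theory of Wach modules), yields the uniform bound: the image of $\Sigma_{T,n}$ lies in $p^{-s}H^1_f(F_n,T)$ for a single $s$ valid for all $n\ge1$.

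For the corestriction relation, I would use the compatibility of the Bloch-Kato exponential with corestriction together with the behaviour of the evaluation maps $G\mapsto G^{\sigma^{-n}}(\zeta_{p^n}-1)$. Concretely, if $(1-\vp)G=g$, then at level $n+1$ we evaluate $G^{\sigma^{-n-1}}$ at $\zeta_{p^{n+1}}-1$, and the trace from $F_{n+1}[[t]]$ to $F_n[[t]]$ (which computes corestriction on the level of Dieudonné modules via the exact sequence \eqref{eq:exact}) sends $\vp^{-n-1}$-evaluations to $\vp^{-n}$-evaluations with an intervening application of $\psi$ or, equivalently, of $\vp$, since $\psi(G)=G$. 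More precisely, the identity $\cor_{n+1/n}\circ\exp_{V,n+1}=\exp_{V,n}\circ\Tr_{F_{n+1}/F_n}$ on tangent spaces, combined with the computation $\sum_{\zeta^p=\zeta_{p^n}}G^{\sigma^{-n-1}}(\zeta-1)=(\vp G)^{\sigma^{-n}}(\zeta_{p^n}-1)=(p\vp\text{-twisted evaluation})$, reduces the claim to bookkeeping with the powers $p^{(b-1)(n+1)}$ versus $p^b\cdot p^{(b-1)n}$, which match since $(b-1)(n+1)=(b-1)n+(b-1)$ and the extra factor of $p$ comes from the relation $\nabla$-twisting contributes in the exponential normalisation; one checks the constant works out to exactly $p^b$ as claimed. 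This is essentially the distribution/norm relation for Perrin-Riou's map, and I would cite or adapt \cite[Th\'eor\`eme~3.2.3]{perrinriou94} and Berger's description \eqref{eq:bergerexpo} to make it precise, being careful that the solution $G$ of $(1-\vp)G=g$ is unique in $(\Brig\otimes\Dcris(T))^{\psi=1}$ under (H.eigen).

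The main obstacle, I expect, is the uniformity of $s$ in $n$: one must show that the denominators introduced by evaluating $G$ at $\zeta_{p^n}-1$ and by the Bloch-Kato exponential do not grow with $n$. The clean way to see this is to go through $h^1_T$ and the interpolation formula rather than estimating $\exp_{V,n}$ directly — the point being that $\Omega_{T,b}$ maps the integral module $(\AQp)^{\psi=0}\otimes\Dcris(T)$ (after clearing a bounded denominator coming from $\Dcris(T)$ versus an integral lattice stable under $\vp$) into $\calH_F\otimes\HIw(F,T)$ with image contained in $\frac{1}{p^{s_0}}\Lambda_F\otimes\HIw(F,T)$ for some fixed $s_0$, because $\Omega_{T,b}$ is $\Lambda_F$-linear and $(\AQp)^{\psi=0}$ is a finitely generated $\Lambda_F$-module. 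Then $\pr_{T,n}$ of this lands in $\frac{1}{p^{s_0}}H^1_f(F_n,T)$ automatically, with no $n$-dependence. The factor $p^{(b-1)n}$ in $\Sigma_{T,n}$ is designed precisely to absorb the $p$-power discrepancy between $\Xi_{T,n}$ (normalised with $(p\vp)^{-n}$) and the raw evaluation, so after this normalisation the bound is clean. Once this is set up, the corestriction relation is a formal consequence of the analogous relation for $\Omega_{T,b}$ under $\pr_{T,n}$, transported through the comparison of $\Sigma_{T,n}$ with $\pr_{T,n}\circ\Omega_{T,b}$.
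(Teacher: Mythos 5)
The paper's own proof is a one-line citation to Perrin-Riou's Proposition 2.4.2, so your attempt to reconstruct the argument is by definition a different route; the norm-compatibility part of your argument is sound (the identity $\Tr_{F_{n+1}/F_n}(G^{\sigma^{-(n+1)}}(\zeta_{p^{n+1}}-1))=p\cdot G'^{\sigma^{-n}}(\zeta_{p^n}-1)$ with $G'=(1\otimes\vp)G$ follows from $\psi(G)=G$ exactly as you outline, and the powers $p^{(b-1)(n+1)+1}=p^b\cdot p^{(b-1)n}$ match). However, the first half of your argument has a genuine gap. You claim that the image of $(\AQp)^{\psi=0}\otimes\Dcris(T)$ under $\Omega_{T,b}$ lands in $p^{-s_0}\Lambda_F\otimes\HIw(F,T)$ for a fixed $s_0$ ``because $\Omega_{T,b}$ is $\Lambda_F$-linear and $(\AQp)^{\psi=0}$ is a finitely generated $\Lambda_F$-module.'' This does not follow. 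The target $\calH_F\otimes\HIw(F,T)$ is a module over $\calH_F$, which strictly contains $\Qp\otimes\Lambda_F$; $\Lambda_F$-linearity of $\Omega_{T,b}$ only tells you the image is a finitely generated $\Lambda_F$-submodule of $\calH_F\otimes\HIw(F,T)$, and such a submodule need not lie in $p^{-s_0}\Lambda_F\otimes\HIw(F,T)$ for any $s_0$ — the image of a generator could be $h\otimes z$ with $h\in\calH_F\setminus\Qp\otimes\Lambda_F$ (e.g.\ involving $\ell_0$), in which case specialization $\pr_{T,n}$ introduces denominators that grow with $n$.

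In fact, the bounded-denominator statement you are trying to derive as a formality is precisely the deep content of Perrin-Riou's integrality result, and her own proof of the analogue of Th\'eor\`eme 3.2.3 \emph{relies on} Proposition 2.4.2, so appealing to Th\'eor\`eme 3.2.3 here is circular. The non-circular route — and the one Perrin-Riou takes, which the present paper later re-proves in sharpened form in Proposition~\ref{prop:PR} and Corollary~\ref{cor:integralexp} — is to estimate the cocycle directly: express $\exp_{V,n}(G^{\sigma^{-n}}(\zeta_{p^n}-1))$ as the class of $\tau\mapsto(\tau-1)G^{\sigma^{-n}}(\epsilon_n-1)$ arising from the fundamental exact sequence \eqref{eq:exact}, decompose $G$ as in \eqref{eq:solution}, and bound each term in $\Acris\otimes\Dcris(T)$ by explicit $p$-adic estimates using \eqref{eq:eigen}, \eqref{eq:newlattice}, and the congruence \eqref{eq:epsilon}. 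The Wach-module observation you make (that $\vp$ stabilizes $\pi^a\NN(T)$ up to a bounded power of $p$) is true under (H.FL) but is used in a different way: it controls the denominators appearing in the solution $G$ of $(1-\vp)G=g$, not the abstract image of $\Omega_{T,b}$.
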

\begin{proof}
This is \cite[Proposition~2.4.2]{perrinriou94} after a slight modification of the definition of $\Sigma_{T,n}$ (from page 108 of \textit{op. cit.}).
\end{proof}

Assume that $a=0$, then  we have the $p$-adic regulator map
\begin{align}
\calL_{T}:\HIw(F,T)&\rightarrow\calH_{F}\otimes\Dcris(T)\notag\\
z&\mapsto(\fM_F^{-1}\otimes 1)\circ(1-\vp)\circ (h^1_T)^{-1}(z).\label{eq:PRregulator}
\end{align}
See \cite[\S3]{leiloefflerzerbes11} for a detailed discussion on this map. By \eqref{eq:twisting} and \eqref{eq:bergerexpo}, we have the relation
\begin{equation}\label{eq:regulator}
\calL_{T}=\ell_{b-1}\cdots \ell_0\cdot(\fM_F^{-1}\otimes1)\circ\Omega_{T,b}^{-1}.
\end{equation}

%\subsection{Tamagawa numbers}
%We now recall the definition of the local Tamagawa numbers. Let $K$ be a finite extension of $F$. We define
%\[
%L_f(T)=\det{}_{\Zp} H^0(K,T)\otimes\det{}_{\Zp}^{-1}H^1_f(K,T).
%\]
%Let $\omega$ be a basis of $\det_{\Qp}t_V(K)$ and $\omega^*$ its dual basis. The Tamagawa number with respect to $\omega$ is the unique power of $p$ defined by
%\[
%\iota_{K,V}(L_f(T))=\Zp\Tam_{K,\omega}(T)\omega^*
%\]
%where $\iota_{K,V}$ is the isomorphism $L_f(V)\rightarrow\det_{\Qp}^{-1}t_V(K)$. Our hypothesis (H.eigen) implies that the
%\[
%\Tam_{K,\omega}(T)=\frac{(H^1_f(K,T):\exp_{V,K}(M))}{\det(1-\vp^{[F:\Qp]}|F_n\otimes\Dcris(T))_{(p)}},
%\]
%where $M$ is a lattice of $K\otimes\Dcris(V)/\Fil^0\Dcris(V)$ such that $\omega$ generates $\det_{\Zp}(M)$ and $x_{(p)}$ denotes $p^{v_p(x)}$ if $x\in\Zp$.

%++++++++++++++++++++++++++++++++++++++++++++++++++++++

 \section{Analysis of the equation $(1-\vp)G=g$}\label{S:Eg}

In this section, we take $T$ to be a fixed crystalline representation as in \S\ref{S:crystalline} satisfying (H.FL) and (H.eigen). The Hodge-Tate weights of $T$ are in $[a;b]$ with  $b\ge1$. By (H.FL), $p^b\vp\Dcris(T)\subset \Dcris(T)$. We fix an integer $1\le r\le b$  such that the slopes of the action of $\vp$ on $\Dcris(T)$ are $\ge -r$. In particular, there exists an integer $s\ge0$ such that 
\begin{equation}
(p^r\vp)^k(\Dcris(T))\subset p^{-s}\Dcris(T)
\label{eq:eigen}
\end{equation}
for all $k\ge0$. We fix an $\calO_F$-basis $v_1,\ldots,v_d$ of $\Dcris(T)$.

Note that $1-p^j\vp$ is invertible on $F\otimes\Dcris(T)$ for all integers $j$ thanks to our assumption (H.eigen). We choose $s$ such that in addition to $\eqref{eq:eigen}$, we have
\begin{equation}\label{eq:newlattice}
(1-p^j\vp)^{-1}(\Dcris(T))\subset p^{-s}\Dcris(T)
\end{equation}
for all $0\le j\le r$.

Let $g\in(\Brig)^{\psi=0}\otimes\Dcris(T)$, our goal in this section is to  study the equation
\begin{equation}
(1-\vp)G=g.\tag{$E_g$}\label{eq:Eg}
\end{equation}
for $G\in(\Brig\otimes\Dcris(T))^{\psi=1}$.
The following is a slight modification of \cite[Proposition~2.2.1]{perrinriou94}.
\begin{prop}\label{prop:PR}
Let $T$, $r$ and $s$ be as above (so that \eqref{eq:eigen} and \eqref{eq:newlattice} are satisfied). Then,
\begin{enumerate}[(i)]
\item The map $(1-\vp):\Brig\otimes\Dcris(T)\rightarrow \Brig\otimes\Dcris(T)$ is bijective;
\item For all $g\in\AQp\otimes\Dcris(T)$ and $n\ge1$, $\tau\in G_{F_n}$, we have
\[
p^{(r-1)n}\cdot(\tau-1)G(\epsilon_n-1)\in p^{1-r-s}\Acris\otimes \Dcris(T),
\]
where $G$ is the unique solution to \eqref{eq:Eg} (which exists by part (i)).
\end{enumerate}
\end{prop}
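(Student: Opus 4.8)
The plan is to solve $(E_g)$ by the standard geometric series trick and then track denominators using the integrality hypotheses \eqref{eq:eigen} and \eqref{eq:newlattice}. For part (i), since $\psi(g)=0$ one writes the candidate solution as $G = -\sum_{k\ge 0}\vp^{k+1}(g)$ (using that $\vp$ is a contraction on the relevant ring in an appropriate topology, combined with the fact that $\vp$ raises the $\pi$-adic/$t$-adic order), and checks convergence in $\Brig\otimes\Dcris(T)$: the operator $\vp$ on $\Brig$ divides by increasing powers of $t$ after the substitution $\pi\mapsto(1+\pi)^p-1$, while on $\Dcris(T)$ the bound $p^b\vp\Dcris(T)\subset\Dcris(T)$ shows the $\Dcris$-part grows at most like $p^{bk}$, which is harmless on $\Brig$ (coefficients in $F$, not $\calO_F$) after combining with the $t$-adic contraction. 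Uniqueness follows because the difference of two solutions is fixed by $\vp$ and lies in $\Brig\otimes\Dcris(T)$, hence must be a $\vp$-eigenvector that is killed by the relevant growth estimate, forcing it to be $0$ — here (H.eigen) is what rules out a nonzero $\vp$-fixed vector coming from $\Dcris(T)$ (an integer power of $p$ would be needed). This is essentially \cite[Proposition~2.2.1]{perrinriou94} and I would cite it for the hardest convergence estimate rather than reprove it.

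For part (ii), the idea is to evaluate the explicit series $G=-\sum_{k\ge 0}\vp^{k+1}(g)$ at $\epsilon_n-1$ and estimate term by term. First I would record that for $g\in\AQp\otimes\Dcris(T)$, since $\vp(\pi)=(1+\pi)^p-1$ and $\epsilon_n-1=[\epsilon_0^{p^{-n}}]-1$, the element $\vp^{k+1}(g)$ evaluated at $\epsilon_n-1$ lies in $\Acris\otimes\Dcris(T)$, with the power-series variable sent to $\vp^{k+1-n}$ applied to the uniformizer; the point is that $\vp^{k+1}$ of the power-series part, evaluated at $\epsilon_n-1$, is divisible by a large power of $p$ once $k+1\ge n$, coming from \eqref{eq:epsilon} which gives $\epsilon_{-m}\in 1+p^m W(\tE^+)$. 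Concretely, for the terms with $k+1\le n$ one uses that $\vp^{k+1}(g)(\epsilon_n-1)$ involves $\epsilon_{n-k-1}-1$ with $n-k-1\ge 0$, controlled directly; for $k+1>n$ the factor $\epsilon_{-(k+1-n)}-1\in p^{k+1-n}\Acris$ provides extra $p$-divisibility that dominates the $p^{bk}$-type growth of $\vp^{k+1}$ on $\Dcris(T)$, so the tail converges with room to spare. Applying $\tau-1$ for $\tau\in G_{F_n}$ kills the terms that already lie in $F_n\otimes\Dcris(T)$ (those with small $k$, since $\tau$ fixes $F_n$ and $\vp^{k+1}(g)(\epsilon_n-1)$ for $k+1\le n$ has coordinates in $\calO_{F_n}$ up to bounded denominator — more precisely, after peeling off the part fixed by $G_{F_n}$ only the tail survives), and on the surviving tail one has the extra $p$-power divisibility. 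Combining the slope bound \eqref{eq:eigen} (which contributes the $p^{-s}$ and the $p^r$ correction per application of $\vp$, explaining the $p^{(r-1)n}$ and $p^{1-r-s}$) with \eqref{eq:newlattice}, and summing the resulting geometric-type series over $k$, yields exactly $p^{(r-1)n}\cdot(\tau-1)G(\epsilon_n-1)\in p^{1-r-s}\Acris\otimes\Dcris(T)$.

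The main obstacle I anticipate is the bookkeeping of the two competing $p$-powers: the \emph{denominators} introduced by $\vp$ acting on $\Dcris(T)$ (where only $p^b\vp$, equivalently $p^r\vp$ up to the fixed loss $p^s$, is integral) versus the \emph{numerators} (extra $p$-divisibility) supplied by evaluating $\vp^{k}$ at $\epsilon_n-1$ via \eqref{eq:epsilon}. One must check that after multiplying by $p^{(r-1)n}$ every term, not just the tail, lands in $p^{1-r-s}\Acris\otimes\Dcris(T)$, and in particular that the $k=0$ (or smallest surviving) term is the worst one, pinning down the exponents $1-r-s$ and $(r-1)n$ exactly. A secondary subtlety is justifying that $G(\epsilon_n-1)$ makes sense as an element of $\Acris\otimes\Dcris(T)$ in the first place — i.e. that the specialization map $\Brig\to\Acris[1/p]$, $\pi\mapsto\epsilon_0-1$ (and its $\vp^{-n}$-twists) behaves well on the solution $G$ — which should follow from the $\psi=1$ property of $G$ together with the convergence from part (i), but needs to be stated carefully. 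I would close by remarking that the power $p^{(r-1)n}$ rather than $p^{rn}$ reflects one extra factor of $p$ gained from the $\epsilon_n-1$ evaluation, which is the quantitative heart of the estimate.
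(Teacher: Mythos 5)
Your overall strategy --- a geometric series for $G$ followed by term-by-term $p$-adic estimates at $\epsilon_n-1$, split according to $k\lessgtr n$ --- is the same broad outline as the paper, but the proposal has a genuine gap at the very start: the naive series does not converge for the class of $g$ in the statement, and repairing this changes the structure of the whole argument.

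First, the candidate $G=-\sum_{k\ge 0}\vp^{k+1}(g)$ has an offset error (it solves $(1-\vp)G=-\vp(g)$, not $(1-\vp)G=g$; the correct series is $\sum_{k\ge0}\vp^k(g)$), but more importantly this series does not converge in $\Brig\otimes\Dcris(T)$ when $g$ has a nonzero constant term (or, more generally, nonzero $\Delta_j(g)$ for some $j\le r$): the constant part $\vp^k(g(0))$ is only bounded by $p^{-s-rk}$ via \eqref{eq:eigen}, with nothing in the ring to compensate. The paper (following Perrin-Riou, with $h$ replaced by $r$) sets $\tilde g=g-\sum_{j=0}^r\frac{t^j}{j!}\otimes\Delta_j(g)\in\pi^{r+1}\Brig\otimes\Dcris(T)$, for which the geometric series \emph{does} converge (the factor $\vp^k(\pi^{r+1})\sim p^{(r+1)k}\pi^{r+1}$ beats the $p^{rk}$ denominator), and inverts the low-degree part by hand: $G=\sum_{k\ge0}\vp^k(\tilde g)+\sum_{j=0}^r\frac{t^j}{j!}\otimes(1-p^j\vp)^{-1}\Delta_j(g)$. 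You cannot black-box this into a citation of Perrin-Riou: part (ii) needs this explicit formula, not just existence.

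As a consequence, your sketch of (ii) is missing an entire family of terms, namely $\frac{t^j}{j!}\otimes(1-p^j\vp)^{-1}\Delta_j(g)$. These are exactly where \eqref{eq:newlattice} (and hence the $p^{-s}$) enters, and where (H.FL) is used to ensure $j!\in\Zp^\times$ because $j\le r\le p-1$; your proposal never invokes \eqref{eq:newlattice} at all. For the terms $\vp^k(\tilde g)$ with $k\le n$, your claim that $\tau-1$ kills them because they lie in $F_n\otimes\Dcris(T)$ is not correct: $\vp^k(\tilde g)(\epsilon_n-1)$ involves $\epsilon_{n-k}\in W(\tE^+)$, on which $\tau\in G_{F_n}$ acts nontrivially by raising to $\chi(\tau)\in 1+p^n\Zp$. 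What is true is that $(\tau-1)$ introduces a factor $\epsilon_{-k}^\alpha-1\in p^k\Acris$ via \eqref{eq:epsilon}, and the resulting $p^k$, together with $n\ge k$ and \eqref{eq:eigen}, yields the bound; it is $p$-divisibility, not Galois invariance, doing the work. Finally, for $k>n$ the phrase ``converges with room to spare'' hides the hardest step: the paper needs the coefficient bound $v_p(a_m)\ge -rs_m$, which only holds \emph{because} the $\Delta_j$'s have been subtracted, together with a binomial expansion of $(x+y)^m-x^m$ and a calculus estimate on $m\mapsto m-r(\log m/\log p+1)$ (using $r\le p-1$ again) to pin down the exponent $1-r-s$. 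Without the $\tilde g$-splitting, none of this machinery is available.
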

\begin{proof}
(i) follows from  \cite[Proposition~2.2.1, parts (i) and (ii)]{perrinriou94} and (H.eigen). We rewrite the construction of $G$ in \textit{op. cit.}  on replacing $h$, which is the largest Hodge-Tate weight of $T$ in Perrin-Riou's proof, by our chosen integer $r$.

Firstly, assume that $g\in \pi^{r+1}\Brig\otimes \Dcris(T)$. Write $g=\pi^{r+1}\sum_{i=1}^df_i\otimes v_i$ where $f_i\in\Brig$. For an integer $k\ge0$, 
\[
\vp^k(g)=p^{-rk}\vp^{k}(\pi)^{r+1}\sum_{i=1}^d\vp^k(f_i)\otimes (p^r\vp)^k(v_i).
\]
Note that $(p^r\vp)^k(v_i)\in p^{-s} \Dcris(T)$ for any $k\ge0$ by \eqref{eq:eigen} and that $p^{-rk}\vp^k(\pi)^{r+1}\rightarrow 0$ as $k\rightarrow\infty$. Furthermore, $\vp^k(f_i)\rightarrow f_i(0)$ as $k\rightarrow\infty$. Hence, we deduce that $\vp^k(g)\rightarrow 0$. This implies that $\sum_{k=0}^\infty \vp^k(g)$ converges to a solution to \eqref{eq:Eg}.

Now, for a general $g\in(\Brig)^{\psi=0}\otimes\Dcris(T)$ and $j\ge0$ an integer, we define $\Delta_j(g)=(\partial^j\otimes 1)(g)|_{\pi=0}\in F\otimes\Dcris(T)$.  Then,
\[
\tilde{g}:=g-\sum_{j=0}^r\frac{t^j}{j!}\otimes\Delta_j(g)\in\pi^{r+1}\Brig\otimes\Dcris(T).
\]
By the first part of our proof, $\sum_{k=0}^{\infty}\vp^k (\tilde{g})$ converges and the limit is the solution to ($E_{\tilde{g}}$). Since $\vp(t^j)=p^jt^j$, we see that
\[
(1-\vp)\left(t^j\otimes(1-p^j\vp)^{-1}\Delta_j(g)\right)=t^j\otimes\Delta_j(g).
\]
Therefore, the solution to \eqref{eq:Eg} is given by
\begin{equation}\label{eq:solution}
G=\sum_{k=0}^{\infty}\vp^k( \tilde{g})+\sum_{j=0}^r\frac{t^j}{j!}\otimes(1-p^j\vp)^{-1}\Delta_j(g).
\end{equation}

We now prove (ii). We shall show that on replacing $\pi$ by $\epsilon_n-1$, the action of $p^{(r-1)n}(\tau-1)$ sends each individual term of \eqref{eq:solution} to $p^{1-r-s}\Acris\otimes\Dcris(T)$ for all $\tau\in G_{F_n}$. 

Fix some $\tau\in G_{F_n}$. By definition, $\chi(\tau)\in 1+p^n\Zp$. Let $\alpha=(\chi(\tau)-1)p^{-n}\in\Zp$. We may describe the actions of $\tau$ explicitly: $\tau\cdot t=\chi(\tau)t$ and $\tau\cdot\epsilon_k=\epsilon_k^{\chi(\tau)}=\epsilon_k\cdot\epsilon_{k-n}^\alpha$ for all integers $k$.

Let us first analyze the terms in the second sum in \eqref{eq:solution}. Since $\log(\epsilon_n)=p^{-n}t$, we have for $0\le j\le r$ the equation
\begin{align*}
&p^{(r-1)n}(\tau-1)\left(\frac{\log^j(\epsilon_n)}{j!}\otimes(1-p^j\vp)^{-1}\Delta_j(g)\right)\\
=&\frac{p^{(r-1)n}(\chi(\tau)^j-1)(p^{-n}t)^j}{j!}\otimes (1-p^j\vp)^{-1}\Delta_j(g)\\
=&\frac{p^{(r-j-1)n}(\chi(\tau)^j-1)t^j}{j!}\otimes (1-p^j\vp)^{-1}\Delta_j(g).
\end{align*}
Note that $\frac{p^{(r-j-1)n}(\chi(\tau)^j-1)t^j}{j!}\in \Acris$ since $\chi(\tau)^j-1\in p^n\Zp$ and $j\le r\le b\le p-1$. Therefore, thanks to \eqref{eq:newlattice}, we deduce that the above expression lies inside $p^{-s}\Acris\otimes\Dcris(T)\subset p^{1-r-s}\Acris\otimes\Dcris(T)$.

We now consider the terms that appear in the first sum in \eqref{eq:solution}:
\[
\vp^k(\tilde{g})=\vp^k(g)-\sum_{j=0}^r\left(\frac{p^{jk}t^j}{j!}\otimes \vp^k (\Delta_j(g))\right),
\]
where $k\ge0$.

\noindent
\underline{\textbf{(Case 1) $n\ge k$}:}

Let $g=\sum_{i=1}^d f_i\otimes v_i$, where $f_i\in\AQp$. Then
\begin{align*}
p^{(r-1)n}(\tau-1)\vp^k(g)(\epsilon_n-1)=&p^{(r-1)n}\sum_{i=1}^d\left( f_i^{\sigma^k}(\epsilon_{n-k}^{\chi(\tau)}-1)-f_i^{\sigma^k}(\epsilon_{n-k}-1)\right)\otimes \vp^k(v_i)\\
=& p^{(r-1)n}\sum_{i=1}^d\left( f_i^{\sigma^k}(\epsilon_{n-k}\cdot\epsilon_{-k}^{\alpha}-1)-f_i^{\sigma^k}(\epsilon_{n-k}-1)\right)\otimes\vp^k(v_i).
\end{align*}
From \eqref{eq:epsilon}, $\epsilon_{-k}\equiv1\mod p^k\Acris$, so the expression above lies inside
$
p^{(r-1)n+k}\Acris\otimes\vp^k(\Dcris(T))$. But $n\ge k$, so via \eqref{eq:eigen}, we deduce that this set is contained in  $\Acris\otimes p^{rk}\vp^k(\Dcris(T))\subset p^{-s}\Acris\otimes \Dcris(T)$.

If $0\le j\le r$, then
\begin{align*}
&p^{(r-1)n}(\tau-1)\left(\frac{p^{jk}\log^j(\epsilon_n)}{j!}\otimes\vp^k(\Delta_j(g))\right)\\
=&p^{(r-1)n+jk}\frac{(\chi(\tau)^j-1)p^{-jn}t^j}{j!}\otimes \vp^k(\Delta_j(g)).
\end{align*}
But $(\chi(\tau)^j-1)\in p^n\Zp$, so  the above expression is contained in
$
 p^{rn+jk-jn}\Acris\otimes\vp^k(\Delta_j(g))$,
 which is contained in $ \Acris\otimes(p^r\vp)^k(\Delta_j(g))\subset p^{-s}\Acris\otimes\Dcris(T)$ thanks to 
the fact that $n\ge k$ and \eqref{eq:eigen}. 

\noindent
\underline{\textbf{(Case 2) $n<k$}:}

Let $\tilde{g}=\sum_{i=1}^d f_i\otimes v_i$. Fix $0\le i\le d$ and write $f_i=\sum_{m=r+1}^\infty a_m\pi^m$, where $a_m\in\calO_F$. By the definition of $\tilde{g}$, we have $v_p(a_m)\ge -rs_m$ for all $m\ge r+1$, where $s_m\ge0$ is the integer such that $p^{s_m}$ is the largest $p$-power $\le m$.

Consider
\[
(\tau-1)\vp^k(f_i)(\epsilon_n-1)=f_i^{\sigma^k}(\epsilon_{n-k}^{\chi(\tau)}-1)-f_i^{\sigma^k}(\epsilon_{n-k}-1).
\]
Let $x=\epsilon_{n-k}-1\in p^{k-n}\Acris$ and $y=\epsilon_{n-k}^{\tau(\chi)}-\epsilon_{n-k}=\epsilon_{n-k}(\epsilon_{-k}^{\alpha}-1)\in p^k\Acris$ (c.f. \eqref{eq:epsilon}). We  have
\[
(\tau-1)\vp^k(f_i)(\epsilon_n-1)=\sum_{m=0}^\infty {\sigma^k}(a_m)((x+y)^m-x^m).
\]
For each $m\ge r+1$, $(x+y)^m-x^m\in p^{m(k-n)+n}\Acris$. Recall that $a_m\in p^{-rs_m}\calO_F$, so 
\[
{\sigma^k}(a_m)((x+y)^m-x^m)\in p^{-rs_m+m(k-n)+n}\Acris.
\]
Therefore,  we deduce that
\[
p^{(r-1)n}(\tau-1)\vp^{k}(\tilde{g})\in p^{rn-rs_m+m(k-n)-rk}\Acris\otimes(p^r\vp)^{k}\left(\Dcris(T)\right).
\]
But $m>r$, $k>n$ and $m\ge p^{s_m}$, so
\[
rn-rs_m+m(k-n)-rk=(m-r)(k-n)-rs_m\ge m-r-rs_m\ge m-r\left(\frac{\log(m)}{\log(p)}+1\right).
\]
The function $f(m)=m-r\left(\frac{\log(m)}{\log(p)}+1\right)$ is increasing by studying its derivative and the fact that $m>r$. Therefore, $f(m)\ge f(r+1)=1-\frac{r\log(r+1)}{\log(p)}$. Recall that $r\le b\le p-1$ by (H.FL), so $f(m)\ge 1-r$. Hence \[
p^{(r-1)n}(\tau-1)\vp^{k}(\tilde{g})\in p^{1-r}\Acris\otimes(p^r\vp)^{k}\left(\Dcris(T)\right)\subset p^{1-r-s}\Acris\otimes\Dcris(T)
\]
by \eqref{eq:eigen}.
\end{proof}

\begin{corollary}\label{cor:integralexp}
If $g\in(\AQp)^{\psi=0}\otimes\Dcris(T)$, then 
\[
p^{(r-b)n}\Sigma_{T,n}(g)\in p^{1-r-s}H^1_f(F_n,T).
\]
\end{corollary}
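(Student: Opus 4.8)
The plan is to realise $p^{(r-b)n}\Sigma_{T,n}(g)$ as the class of an explicit $1$-cocycle and to control the values of that cocycle by means of Proposition~\ref{prop:PR}(ii).

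First I would unwind Definition~\ref{defn:Sigman}: by definition $\Sigma_{T,n}(g)=p^{(b-1)n}\exp_{V,n}\big(G^{\sigma^{-n}}(\zeta_{p^n}-1)\big)$, where $G\in(\Brig\otimes\Dcris(T))^{\psi=1}$ is the unique solution of \eqref{eq:Eg} produced by Proposition~\ref{prop:PR}(i), given by the explicit formula \eqref{eq:solution}. Since $(r-b)n+(b-1)n=(r-1)n$, it is enough to show that $p^{(r-1)n}\exp_{V,n}\big(G^{\sigma^{-n}}(\zeta_{p^n}-1)\big)$ lies in $p^{1-r-s}H^1_f(F_n,T)$.

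Next I would recall that $\exp_{V,n}$ is the connecting homomorphism of \eqref{eq:exact}, so that, exactly as in the proof of \cite[Proposition~2.4.2]{perrinriou94} (i.e.\ of Proposition~\ref{prop:IntegralPR}; see also \cite{berger03}), the class $\exp_{V,n}\big(G^{\sigma^{-n}}(\zeta_{p^n}-1)\big)$ is represented by a cocycle $\tau\mapsto(\tau-1)\widehat G$ with $\widehat G\in\Bcris\otimes V$ a $\varphi$-fixed lift of $G^{\sigma^{-n}}(\zeta_{p^n}-1)$ modulo $\BdR^+\otimes V$, and $\widehat G$ can be taken to be $G(\epsilon_n-1)$ up to a convergent series built from the $\varphi^{k}(g)$ evaluated at the $\epsilon_{n-k}-1$. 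Because $g\in(\AQp)^{\psi=0}\otimes\Dcris(T)$ has coefficients in $\calO_F$, the $(\tau-1)$-images of those auxiliary terms already lie in $\Acris\otimes\Dcris(T)$ by the same kind of computation as in Cases~1 and~2 of the proof of Proposition~\ref{prop:PR}(ii), only simpler (using \eqref{eq:epsilon} and \eqref{eq:eigen}). Consequently $p^{(r-1)n}\exp_{V,n}\big(G^{\sigma^{-n}}(\zeta_{p^n}-1)\big)$ is represented by $\tau\mapsto p^{(r-1)n}(\tau-1)\widehat G$, which by Proposition~\ref{prop:PR}(ii) (together with the preceding remark) takes values in $p^{1-r-s}\Acris\otimes\Dcris(T)$. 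On the other hand this cocycle takes values in $V$: indeed $(\tau-1)\widehat G$ lies in $\Qp\otimes V=V$ since $\widehat G$ is $\varphi$-fixed and congruent to a $G_{F_n}$-invariant element modulo $\BdR^+\otimes V$, so $(\tau-1)\widehat G$ is $\varphi$-fixed and lies in $\BdR^+\otimes V$. Invoking the Fontaine--Laffaille integrality under (H.FL), namely $V\cap(\Acris\otimes\Dcris(T))=T$ inside $\Bcris\otimes V$ (after a Tate twist to reduce to Hodge--Tate weights in $[0,p-1]$ if $a<0$), the cocycle is valued in $p^{1-r-s}T$. Hence $p^{(r-b)n}\Sigma_{T,n}(g)\in p^{1-r-s}H^1(F_n,T)$; as $\Sigma_{T,n}(g)\in H^1_f(F_n,V)$ by construction, it actually lies in $p^{1-r-s}H^1_f(F_n,T)$.

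The main obstacle is the cocycle identification: pinning down precisely which cocycle represents $\exp_{V,n}\big(G^{\sigma^{-n}}(\zeta_{p^n}-1)\big)$, verifying that its principal term is $G(\epsilon_n-1)$ so that Proposition~\ref{prop:PR}(ii) applies verbatim — which requires careful bookkeeping of the Frobenius twists hidden in $G^{\sigma^{-n}}$ versus $G$ and in the passage between $\Xi_{T,n}$ and $\Sigma_{T,n}$ — and checking that the remaining, integrally defined terms contribute nothing to the denominator. The integrality statement $V\cap(\Acris\otimes\Dcris(T))=T$ is classical under (H.FL) but also calls for a brief justification or reference.
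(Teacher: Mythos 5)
Your proof is correct and follows essentially the same route as the paper: unwind $\Sigma_{T,n}$ via Definition~\ref{defn:Sigman}, realize the Bloch--Kato exponential as the cocycle $\tau\mapsto(\tau-1)G^{\sigma^{-n}}(\epsilon_n-1)$, and invoke Proposition~\ref{prop:PR}(ii). You are in fact slightly more careful than the paper's own terse proof, which passes directly from Proposition~\ref{prop:PR}(ii) to the conclusion without explicitly spelling out the Fontaine--Laffaille integrality $V\cap(\Acris\otimes\Dcris(T))=T$ that you correctly flag as needed to upgrade $\Acris\otimes\Dcris(T)$-integrality of the cocycle to $T$-integrality.
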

\begin{proof}
Recall from Definition~\ref{defn:Sigman} that 
\[
p^{(r-b)n}\Sigma_{T,n}(g)=p^{(r-1)n}\exp_{V,n}\left(G^{\sigma^{-n}}(\zeta_{p^n}-1)\right).
\]
But $\exp_{V,n}$ is defined to be the connecting map that arises from the fundamental short exact sequence
\[
0\rightarrow \Qp\rightarrow \Bcris^{\vp=1}\rightarrow \BB_{\rm dR}/\BB_{\rm dR}^+\rightarrow0
\]
after tensoring by $V$, which implies that $\exp_{V,n}\left(G^{\sigma^{-n}}(\zeta_{p^n}-1)\right)$ is represented by the cocycle that sends $\tau\in G_{F_n}$ to $(\tau-1)\cdot G^{\sigma^{-n}}(\epsilon_{n}-1)$.
Therefore, the lemma follows from Proposition~\ref{prop:PR}(ii).
\end{proof}

%++++++++++++++++++++++++++++++++++++++++++++++++++++++

\section{On an inverse of $\Sigma_{T,n}$}\label{S:wach}

\subsection{Preliminary Results on Wach modules}

In this section, we fix a representation as in \S\ref{S:crystalline} under the conditions (H.eigen) and (H.FL). Let $\T$ be the Tate twist $T(-a)$ of $T$. So, all the Hodge-Tate weights of $\T$ are  $\ge0$.  Let $r_1=0,\ldots,r_d=b-a$ be the Hodge-Tate weights of $\T$. Let $v_1,\ldots,v_d$ be a $\calO_F$-basis of $\Dcris(\T)$ that respects its filtration. Then, by (H.FL), the matrix of $\vp$ with respect to this basis is of the form
\[
A:=\begin{pmatrix}
p^{-r_1}&&\\
&\ddots&\\
&&p^{-r_d}
\end{pmatrix}\cdot A_0,
\]
where $A_0\in\GL_d(\calO_F)$.

\begin{prop}\label{prop:Wachbasis}
Under the identification $\NN(\T)/\pi\NN(\T)=\Dcris(\T)$, we may lift $v_1,\ldots, v_d$ to  a $\AQp$-basis of $\NN(\T)$, with respect to which the matrix of $\vp$ is of the form
\[
P:=\begin{pmatrix}
\mu^{r_d-r_1} q^{-r_1}&&\\
&\ddots&\\
&& q^{-r_d}
\end{pmatrix}\cdot A_0,
\]
where $\mu=p/(q-\pi^{p-1})\in (\AQp)^\times$.
\end{prop}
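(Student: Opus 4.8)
The plan is to combine Berger's structural description of the Wach module $\NN(\T)$ from \cite{berger04} with an explicit construction. Recall from \textit{loc.\ cit.} that $\NN(\T)$ is a free $\AQp$-module of rank $d$ carrying a semilinear $\vp$ and a commuting action of $\Gamma$; that there is a canonical $\vp$-equivariant identification $\NN(\T)/\pi\NN(\T)=\Dcris(\T)$; that $\NN(\T)\subseteq\vp^*\NN(\T):=\AQp\cdot\vp(\NN(\T))$ with $\vp^*\NN(\T)/\NN(\T)$ of elementary divisors $q^{r_1},\dots,q^{r_d}$, which is how the Hodge--Tate weights are recorded; and that $\NN(\T)$ is characterised uniquely by these properties together with the recovery of the Hodge filtration of $\Dcris(\T)$ from the localisations $\vp^{-n}$. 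Since $\AQp=\calO_F[[\pi]]$ is $\pi$-adically complete and $\Dcris(\T)=\bigoplus_i\calO_Fv_i$, any lifts $e_1,\dots,e_d\in\NN(\T)$ of $v_1,\dots,v_d$ are automatically an $\AQp$-basis; writing $M$ for the matrix of $\vp$ in such a basis, the identification modulo $\pi$ forces $M\equiv A\pmod\pi$ and the elementary divisors force $M^{-1}\in M_d(\AQp)$ with Smith normal form $\mathrm{diag}(q^{r_1},\dots,q^{r_d})$.

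I would first record the identity $q\mu=p+\mu\pi^{p-1}$: since $q-\pi^{p-1}=p\,u$ with $u\in1+\pi\calO_F[[\pi]]$ one gets $\mu=u^{-1}\in(\AQp)^\times$ and $\mu\equiv1\pmod\pi$. Rewriting the proposed matrix as $P=\mathrm{diag}\bigl(\mu^{r_d}(q\mu)^{-r_1},\dots,\mu^{r_d}(q\mu)^{-r_d}\bigr)A_0$, this shows that $P$ has the invariants forced above: reducing modulo $\pi$, where $q\mu\equiv p$, gives $P\equiv A$, and $P^{-1}=A_0^{-1}\mathrm{diag}(\mu^{r_i-r_d}q^{r_i})\in M_d(\AQp)$ has Smith normal form $\mathrm{diag}(q^{r_i})$ because $\mu$ and $A_0$ are units. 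I would then build the candidate Wach module directly: solve the recursion $\vp(n_j)=\sum_iP_{ij}n_i$ for a system $n_1,\dots,n_d$ reducing to $v_1,\dots,v_d$ modulo $\pi$ — feasible by a convergent iteration, since $P$ is the diagonal matrix $\mathrm{diag}(\mu^{r_d}(q\mu)^{-r_i})$ corrected by a unit and $\vp$ is suitably contracting, in the spirit of the solution of $(E_g)$ carried out in \S\ref{S:Eg} — let $\NN'$ be the $\AQp$-span of the $n_i$, and verify that $\NN'$ is free of rank $d$ with $\NN'/\pi\NN'=\Dcris(\T)$, that $\NN'\subseteq\vp^*\NN'$ with elementary divisors $q^{r_i}$ (immediate from $P^{-1}\in M_d(\AQp)$ and its Smith form), that the localisations $\vp^{-n}(\NN')$ encode the given Hodge filtration, and that $\NN'$ carries a commuting $\Gamma$-action trivial modulo $\pi$. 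By the uniqueness recalled above this gives $\NN'=\NN(\T)$, and $n_1,\dots,n_d$ is the required basis. A variant is to work inside the given $\NN(\T)$ and seek $U\in\GL_d(\AQp)$ with $U\equiv\Id\pmod\pi$ and $M\vp(U)=UP$, solved by successive approximation modulo powers of $\pi$ — each step an $\calO_F$-linear Sylvester-type equation — with $\det U\equiv1\pmod\pi$ ensuring $U\in\GL_d(\AQp)$ and $n_j=\sum_iU_{ij}e_i$ the new basis.

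The main obstacle is not the construction but the verification that the Frobenius matrix is \emph{exactly} $P$: many $\vp$-stable $\AQp$-lattices share a given reduction modulo $\pi$ and a given Smith form for $\vp^*$, so one must genuinely use the hypothesis that $v_1,\dots,v_d$ respects the Hodge filtration. Concretely, the factors $\mu^{r_d-r_i}q^{-r_i}$ — rather than bare powers of $q$ — are forced: among all $q$-units they are the unique ones reducing to $p^{-r_i}$ modulo $\pi$, and this normalisation is precisely what makes $\vp^{-n}(n_i)$ land in the correct step $\Fil^{-r_i}$ of the Hodge filtration of $F_n((t))\otimes\Dcris(\T)$, which is Berger's filtration criterion; no other $q$-unit passes this test. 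I would organise the bookkeeping by dévissage on the number of distinct Hodge--Tate weights, using that a jump of the filtration presents $\T$, after a Tate twist to restore non-negativity, as an extension of crystalline representations whose Wach modules are the corresponding sub- and quotient-modules of $\NN(\T)$, the rank-one case and the two-weight case — essentially contained in \cite{leiloefflerzerbes11} — serving as the base of the induction. The remaining delicate point, in the first approach, is the construction of the commuting $\Gamma$-action on $\NN'$; I would obtain it by a further approximation or by descending the $\Gamma$-action from a scalar extension of $\NN'$ on which it becomes a constant $\vp$-module, as in \cite{berger04}.
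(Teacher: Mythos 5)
Your proposal identifies the right framework but does not actually carry out the step that is the content of the proposition. What has to be proved is the lifting statement: given the filtered $\vp$-module $\Dcris(\T)$ with filtered basis $v_1,\ldots,v_d$, lift to an $\AQp$-basis of $\NN(\T)$ with Frobenius matrix of the prescribed shape. In the paper that lifting is outsourced to the proof of \cite[Proposition~V.2.3]{berger04}, applied after a Tate twist: one twists by $-r_d$ so that $\T(-r_d)$ has Hodge--Tate weights in $[r_1-r_d,0]$ and $\vp$ on $\Dcris(\T(-r_d))$ has matrix $p^{r_d}A=\mathrm{diag}(p^{r_d-r_1},\ldots,1)A_0$, obtains from Berger's proof a basis of $\NN(\T(-r_d))$ with Frobenius matrix $\mathrm{diag}((\mu q)^{r_d-r_1},\ldots,1)A_0$, and then twists back via $\NN(\T)=\NN(\T(-r_d))\otimes\pi^{-r_d}e_{r_d}$, which multiplies each diagonal entry by $q^{-r_d}$ and yields $P$. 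You do not use this twist, and so you never place yourself in the exact situation that Berger's proposition treats.

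Instead you try to re-derive the lifting from the axiomatic characterization of $\NN(\T)$, and there the argument has gaps. The ``convergent iteration'' producing $n_1,\ldots,n_d$, and likewise the ``successive approximation'' solving $M\vp(U)=UP$ with $U\equiv I_d\bmod\pi$, are asserted rather than proved; solving this $\vp$-semilinear equation, and showing convergence, is exactly the substance of Berger's argument, with the Fontaine--Laffaille bound $r_d\le p-1$ entering in an essential way. Your justification of the precise normalizing factors is also incorrect: you claim $\mu^{r_d-r_i}q^{-r_i}$ is ``the unique $q$-unit reducing to $p^{-r_i}$ modulo $\pi$,'' but since $\mu\equiv 1\bmod\pi$ every $\mu^aq^{-r_i}$ has the same reduction, so this pins down the power of $q$ but not the power of $\mu$; the $\mu$-exponent $r_d-r_i$ emerges from Berger's construction (and from the twist normalization), not from reduction modulo $\pi$. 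Finally, producing the commuting $\Gamma$-action ``as in \cite{berger04}'' gestures at precisely the cited proof but leaves the work undone. Replacing the reconstruction attempt with the twist-and-cite argument of the paper closes all of these gaps at once.
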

\begin{proof}
Consider the representation $\T(-r_d)$. Its Hodge-Tate weights are $r_1-r_d,\ldots,0$ and $v_i\otimes t^{r_d}e_{-r_d}$ form a basis of $\Dcris(\T(-r_d))$. The matrix of $\vp$ with respect to this basis is
\[
p^{r_d}A=\begin{pmatrix}
p^{r_d-r_1}&&\\
&\ddots&\\
&&1
\end{pmatrix}\cdot A_0,
\]
since $\vp(t)=pt$.

  By the proof of \cite[Proposition~V.2.3]{berger04}, we can lift it to a $\AQp$-basis of $\NN(\T(-r_d))$ (say $n_1',\ldots,n_d'$) such that the matrix of $\vp$ with respect to this basis is given by
  \[
  \begin{pmatrix}
(\mu q)^{r_d-r_1}&&\\
&\ddots&\\
&&1
\end{pmatrix}\cdot A_0.
  \]
  But $n_1'\otimes \pi^{-r_d}e_{r_d},\ldots, n_d'\otimes \pi^{-r_d}e_{r_d}$ is a basis of $\NN(\T)$ and $$\vp(n_i'\otimes \pi^{-r_d}e_{r_d})=q^{-r_d}\vp(n_i')\otimes \pi^{-r_d}e_{r_d},\quad i=1,\ldots,d.$$ Hence the result.
\end{proof}

Note in particular that the inverse $P^{-1}$ is a matrix with coefficients in $\AQp$. Let $n_1,\ldots,n_d$ be the basis given by Proposition~\ref{prop:Wachbasis} (so $n_i\mod \pi=v_i$ for $i=1,\ldots,d$). We have a change of basis matrix $M=(m_{ij})\in M_{d\times d}(\Brig)$ that satisfies  $n_i=\sum_{j=1}^dm_{ij}v_j$ for $i=1,\ldots,d$. By the semi-linearity of the action of $\vp$ on $\NN(T)$, we have the equation 
\begin{equation}
PM=\vp(M)A,\label{eq:actions}
\end{equation}
which is equivalent to $M=P^{-1}\vp(M)A$. Substituting $M$ on the right-hand side repeatedly, we obtain for any $n\ge0$ the relation
\begin{equation}\label{eq:phirelation}
M=P^{-1}\vp(P^{-1})\cdots\vp^n(P^{-1})\vp^{n+1}(M)\vp^n(A)\cdots \vp(A)A.
\end{equation}

It follows from \cite[Proposition III.2.1]{berger04} that there is an inclusion of $\Brig$-modules
\begin{equation}\label{eq:changeWach}
\Brig\otimes\Dcris(\T(-r_d))\subset\Brig\otimes\NN(\T(-r_d))
\end{equation}
with elementary divisors $[(t/\pi)^{r_1};\ldots;(t/\pi)^{r_d}]$. Therefore, via \eqref{eq:twistDieudonne} and \eqref{eq:twistWach}, we  deduce that
\[
\Brig\otimes\NN(\T)\subset\Brig\otimes\Dcris(\T)
\]
whose elementary divisors are $[(t/\pi)^{r_1};\ldots;(t/\pi)^{r_d}]$.

\begin{prop}\label{prop:matrixcong}
The matrix $M$ lies inside $I_d+\pi^{r_d} M_{d\times d}(\Brig)$, where $I_d$ is the $d\times d$ identity matrix.
\end{prop}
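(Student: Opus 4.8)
\emph{Proof strategy.} The plan is to read the congruence off the relation $PM=\vp(M)A$ of \eqref{eq:actions} by reducing everything modulo $\pi^{r_d}$ and passing to the limit in its iterated form \eqref{eq:phirelation}. Since the $n_i$ were chosen to reduce to the $v_i$ modulo $\pi\NN(\T)$ we already know $M\in I_d+\pi M_{d\times d}(\Brig)$, so the real content is to improve the exponent of $\pi$ from $1$ to $r_d$; in particular one may assume $r_d\ge 2$, and then $p\ge 3$ by (H.FL).

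First I would extract the consequences of (H.FL) that drive the computation. Because $b-a=r_d\le p-1$, the powers $\pi^{p-1},\pi^p,\dots$ all vanish modulo $\pi^{r_d}$; moreover $q-\pi^{p-1}=\sum_{j=1}^{p-1}\binom pj\pi^{j-1}$ is divisible by $p$, and since by definition $q-\pi^{p-1}=p/\mu$ this yields $q\equiv p\mu^{-1}\pmod{\pi^{p-1}}$. Plugging this into the shapes $P=\mathrm{diag}(\mu^{r_d-r_i}q^{-r_i})A_0$ and $A=\mathrm{diag}(p^{-r_i})A_0$ of Proposition~\ref{prop:Wachbasis}, and using $r_i\le r_d\le p-1$, one gets $q^{r_i}\equiv(p\mu^{-1})^{r_i}\pmod{\pi^{r_d}}$ for every $i$, so that the diagonal parts of $P$ and $A$ agree up to the single scalar $\mu^{r_d}$; that is,
\[
P\equiv \mu^{r_d}A\pmod{\pi^{r_d}M_{d\times d}(\Brig)}.
\]
Applying $\vp^k$ (which maps $\pi^{r_d}\Brig$ into itself since $\vp(\pi)=q\pi$) gives $\vp^k(P^{-1})\equiv\vp^k(\mu)^{-r_d}\vp^k(A)^{-1}\pmod{\pi^{r_d}}$ for all $k\ge0$; and the same identity $q\equiv p\mu^{-1}$ shows $\vp^k(\pi)\equiv p^k\pi\cdot(\text{unit})\pmod{\pi^{r_d}}$, whence $\vp^k(\mu)\equiv1$ modulo $p^k\calO_F[[\pi]]+\pi^{r_d}\calO_F[[\pi]]$, so that the infinite product $\prod_{k\ge0}\vp^k(\mu)^{-r_d}$ converges $p$-adically modulo $\pi^{r_d}$.

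Finally one substitutes these congruences into \eqref{eq:phirelation} and lets $n\to\infty$: as in the proof of Proposition~\ref{prop:PR} one has $\vp^{n+1}(f)\to f(0)$ in $\Brig$ for every $f$, hence $\vp^{n+1}(M)\to I_d$, and once the middle factor is replaced by $I_d$ the outer factors $\vp^k(P^{-1})$ and $\vp^k(A)$ telescope against each other modulo $\pi^{r_d}$, leaving only the scalar $\prod_{k=0}^n\vp^k(\mu)^{-r_d}$; matching the resulting scalar congruence with the known congruence $M\equiv I_d\pmod\pi$ should then pin down $M$ modulo $\pi^{r_d}$. I expect the main difficulty to be making this last passage to the limit rigorous: the partial products $\vp^0(P^{-1})\cdots\vp^n(P^{-1})$ and $\vp^n(A)\cdots\vp^0(A)$ do \emph{not} converge individually — their entries accumulate unboundedly large positive, respectively negative, powers of $p$ — so one cannot simply replace $\vp^{n+1}(M)$ by $I_d$ term by term. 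Instead one must arrange the estimate so that the only surviving contributions are the combined product of those two (which the telescoping makes a unit times $I_d$ modulo $\pi^{r_d}$, hence bounded) together with the perturbation coming from $\vp^{n+1}(M)-I_d$, which tends to $0$ $p$-adically modulo $\pi^{r_d}$ at rate $O(p^{-n})$; it is here that the normalisation $r_1=0$ (the smallest Hodge--Tate weight of $\T$ being $0$, so that $\vp$ on $\NN(\T)$ is as integral as possible) ought to be indispensable. A useful auxiliary constraint in controlling the estimates is the elementary-divisor statement $\Brig\otimes\NN(\T)\subset\Brig\otimes\Dcris(\T)$ with divisors $(t/\pi)^{r_i}$, which already forces $\det M$ to equal $(t/\pi)^{r_1+\cdots+r_d}$ times a unit of $\Brig^\times$.
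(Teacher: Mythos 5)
Your approach is genuinely different from the paper's: the paper never uses the iterated $\vp$-relation \eqref{eq:phirelation} at all. Instead it works with the matrix $G_\gamma$ of $\gamma$ on the basis $n_1',\ldots,n_d'$ of $\NN(\T(-r_d))$, exploits the key Wach-module fact (from Berger's construction, \cite[Proposition~V.2.3]{berger04}) that $G_\gamma\equiv I_d\bmod\pi^{p-1}$, and uses the $\gamma$-invariance of $v_i\otimes t^{r_d}e_{-r_d}$ to get the relation $\gamma(M')G_\gamma=M'$ with $M'=(t/\pi)^{r_d}M^{-1}$. Writing $M'=I_d+\pi N$ and comparing coefficients of $\pi$ (using $\gamma(\pi)=u\pi\cdot(\text{unit})$ and $u\ne1$) forces $N(0)=0$, and iterating improves the congruence to $M'\equiv I_d\bmod\pi^{p-1}$, from which the statement is deduced. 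This is a fundamentally $\Gamma$-theoretic argument; the $\vp$-action is not touched.

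Your proposal, on the other hand, tries to squeeze the congruence out of \eqref{eq:phirelation} by sending $n\to\infty$, and you candidly flag that the limit step is where the difficulty lies. That difficulty is fatal as described. Reducing \eqref{eq:phirelation} modulo $\pi^{r_d}$ and using your (correct) congruence $P^{-1}\equiv\mu^{-r_d}A^{-1}\bmod\pi^{r_d}$ gives, for every $n$,
\[
\bar M=\bar\lambda_n\,\bar B_n^{-1}\,\vp^{n+1}(\bar M)\,\bar B_n,
\qquad \bar\lambda_n=\prod_{k=0}^{n}\vp^k(\mu)^{-r_d},\quad
\bar B_n=\sigma^n(A)\cdots\sigma(A)A,
\]
where bars denote reduction in $M_{d\times d}(F[\pi]/\pi^{r_d})$. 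Writing $\vp^{n+1}(\bar M)=I_d+\bar E_n$, one has $\bar E_n=O(p^{n+1})$ (its coefficients carry $\binom{p^{n+1}}{j}$ for $j<p$), but $\bar B_n$ has entries with $p$-adic valuation as low as $-(n+1)r_d$, so $\bar B_n^{-1}\bar E_n\bar B_n$ is only $O(p^{(n+1)(1-r_d)})$, which for $r_d\ge2$ does \emph{not} tend to $0$. There is no telescoping to save you: $\bar B_n^{-1}$ and $\bar B_n$ do not cancel around a non-scalar factor, and the error is being amplified by exactly the denominators that $\vp$ introduces. Worse, even if one assumes optimistically that $\bar M-I_d$ is scalar so that conjugation by $\bar B_n$ becomes harmless, the equation yields $\bar M=\lambda_\infty I_d$ with $\lambda_\infty=\prod_{k\ge0}\vp^k(\mu)^{-r_d}$, and $\lambda_\infty\not\equiv1\bmod\pi^2$ since $\mu\not\equiv1\bmod\pi^2$; so your scheme, pushed through, would produce an answer that disagrees with $I_d$ already at order $\pi$-squared. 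This strongly suggests the $\vp$-relation alone cannot carry the congruence: the essential input is the $\Gamma$-structure, specifically $G_\gamma\equiv I_d\bmod\pi^{p-1}$, which encodes a constraint invisible to $\vp$. In the paper's argument the conjugating matrix $G_\gamma$ lies in $\GL_d(\AQp)$ with integral inverse, so no valuations are amplified; that is exactly the feature your $B_n$ lacks.

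Two smaller points. First, the remark that the elementary-divisor statement forces $\det M=(t/\pi)^{\sum r_i}\cdot(\Brig^\times)$ is a fine sanity check but gives no leverage on the matrix entries themselves. Second, your appeal to $r_1=0$ being ``indispensable'' is a good instinct (it normalises $A$ to have a unit eigenvalue and is what keeps $B_n^{-1}$ integral), but on its own it does not repair the estimate, because the bad direction is $B_n$, not $B_n^{-1}$.
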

\begin{proof}
Let $n_1',\ldots,n_d'$ be the basis in the proof of Proposition~\ref{prop:Wachbasis}. Let $G_\gamma$ be the matrix of $\gamma$ with respect to this basis. Then, the proof of \cite[Proposition~V.2.3]{berger04} tells us that
\[
G_\gamma\in I_d+\pi^{p-1}M_{d\times d}(\AQp).
\]

Let $M'=(m_{ij}')=(t/\pi)^{r_d}M^{-1}$, which is the change of basis matrix for the inclusion 
\eqref{eq:changeWach}. In particular, this is a matrix with entries in $\Brig$. We have 
\[
v_i\otimes t^{r_d} e_{-r_d}=\sum_{j=1}^{d}m_{ij}'n_j',\quad i=1,\ldots, d.
\]
The left-hand side of this equation above is invariant under the action of $\gamma$, this implies the relation
\[
\gamma(M')G_\gamma=M'.
\]

By definition, $M'\equiv I_d\mod \pi$. Let $M'=I_d+\pi N$, where $N$ is a matrix defined over $\Brig$. Then, the fact that $G_\gamma\equiv I_d\mod \pi^{p-1}$ implies that
\[
\gamma(\pi)\gamma(N)\equiv \pi N\mod\pi^{p-1}.
\]
On comparing the coefficients of $\pi$, we see that $N\equiv 0\mod \pi$. In other words, $M'\equiv I_d\mod \pi^2$. On repeating this procedure, we can show that $M'\equiv I_d\mod \pi^{p-1}$.

But $M=(t/\pi)^{r_d}(M')^{-1}$ and $t/\pi\equiv 1\mod \pi$. Since we have assumed that $r_d\le p-1$ (i.e. (H.FL)), we have  $M\equiv I_d\mod \pi^{r_d}$ as required.
\end{proof}

\begin{defn}For an integer  $n\ge0$, we define
\[
M_{\log,n}=\fM_F^{-1}\left((1+\pi)\vp^n(M)\right).
\]
\end{defn}

\begin{corollary}\label{cor:logmellin}
For all $n\ge1$,
\[
\omega_{n-1,\{0,\ldots, r_d-1\}}(\gamma)|\left(M_{\log,n}-I_d\right)
\]
over $\calH_F$.
\end{corollary}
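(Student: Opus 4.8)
The plan is to pass, via the Mellin transform, from the stated divisibility to a statement about the operators $\vp$ and $\partial$ on $(\Brig)^{\psi=0}$, and then to feed in the congruence $M\equiv I_d\bmod\pi^{r_d}$ of Proposition~\ref{prop:matrixcong}. First, since
\[
\omega_{n-1,m}(\gamma)=(u^{-m}\gamma)^{p^{n-1}}-1=\prod_{k=0}^{n-1}\Phi_{k,m}(\gamma),\qquad \omega_{n-1,\{0,\dots,r_d-1\}}(\gamma)=\prod_{m=0}^{r_d-1}\omega_{n-1,m}(\gamma),
\]
and since the polynomials $\Phi_{k,m}(\gamma)$ with $0\le k\le n-1$ and $0\le m\le r_d-1$ are pairwise coprime in each $\Delta$-isotypic component of $\calH_F$ (their roots, the elements $u^m\zeta$ with $\zeta$ a primitive $p^k$-th root of unity, are pairwise distinct because $u\in1+p\Zp$ has infinite order and $p$ is odd), it suffices to prove $\Phi_{k,m}(\gamma)\mid(M_{\log,n}-I_d)$ over $\calH_F$ for every such $k$ and $m$ (the case $r_d=0$ being vacuous). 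By \cite[Theorem~5.4]{leiloefflerzerbes10} when $m=0$ and by Corollary~\ref{cor:mellin} when $m\ge1$, applied entrywise, this is in turn equivalent to
\[
\partial^m\fM_F(M_{\log,n}-I_d)\in\Phi_{k+1}(1+\pi)\,(\Brig)^{\psi=0}.
\]

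To analyse the left-hand side, note that $\fM_F(M_{\log,n}-I_d)=(1+\pi)\vp^n(M)-(1+\pi)I_d=(1+\pi)\vp^n(M-I_d)$, and Proposition~\ref{prop:matrixcong} lets us write $M-I_d=\pi^{r_d}N$ with $N\in M_{d\times d}(\Brig)$, so $\fM_F(M_{\log,n}-I_d)=(1+\pi)\vp^n(\pi^{r_d}N)$. From $\partial\vp=p\vp\partial$ one obtains by induction the identity $\partial^m\bigl((1+\pi)\vp^n(g)\bigr)=(1+\pi)\vp^n\bigl((1+p^n\partial)^m g\bigr)$ for $g\in\Brig$. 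Applying it to $g=\pi^{r_d}N$ and using $m\le r_d-1$, a Leibniz-rule computation shows that $(1+p^n\partial)^m(\pi^{r_d}N)$ is divisible by $\pi$ in $M_{d\times d}(\Brig)$, say equal to $\pi Z$; hence
\[
\partial^m\fM_F(M_{\log,n}-I_d)=(1+\pi)\,\vp^n(\pi)\,\vp^n(Z).
\]
Writing $q=\Phi_1(1+\pi)=\bigl((1+\pi)^p-1\bigr)/\pi$, one has $\vp^n(\pi)=\pi\prod_{j=0}^{n-1}\vp^j(q)$ with $\vp^j(q)=\Phi_{j+1}(1+\pi)$, so the factor $\Phi_{k+1}(1+\pi)=\vp^k(q)$ divides $\vp^n(\pi)$, and the displayed element equals $\Phi_{k+1}(1+\pi)$ times the cofactor $(1+\pi)\bigl(\vp^n(\pi)/\vp^k(q)\bigr)\vp^n(Z)\in M_{d\times d}(\Brig)$.

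The remaining task — and the only genuinely delicate point — is to check that this cofactor lies in $(\Brig)^{\psi=0}$, the difficulty being localised in the factor $\Phi_1(1+\pi)=q$, which, unlike $\Phi_{j+1}(1+\pi)$ for $j\ge1$, is not of the form $\vp(\bullet)$. When $k\ge1$ one uses $\pi q=\vp(\pi)$ to see that $\vp^n(\pi)/\vp^k(q)$ is itself of the form $\vp(\bullet)$, so the cofactor has shape $(1+\pi)\vp(h)$ and $\psi$ kills it because $\psi\bigl((1+\pi)\vp(h)\bigr)=\psi(1+\pi)h=0$. When $k=0$ one computes $\vp^n(\pi)/q=\pi\,\vp\bigl(((1+\pi)^{p^{n-1}}-1)/\pi\bigr)$, so the cofactor has shape $(1+\pi)\pi\,\vp(h)=\bigl((1+\pi)^2-(1+\pi)\bigr)\vp(h)$, which is annihilated by $\psi$ precisely because $p$ is odd, so that $\psi((1+\pi)^i)=0$ for $i=1,2$. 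This yields the required membership in $\Phi_{k+1}(1+\pi)(\Brig)^{\psi=0}$ in all cases and, with the reduction of the first paragraph, completes the proof. The main obstacle is thus exactly this $\psi$-bookkeeping around the non-$\vp$-divisible cyclotomic factor $\Phi_1(1+\pi)$; everything else is formal manipulation with $\vp$, $\partial$ and cyclotomic identities.
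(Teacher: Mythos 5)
Your proof is correct and follows essentially the same route as the paper's: Proposition~\ref{prop:matrixcong} gives $M\equiv I_d\bmod\pi^{r_d}$, hence $\fM_F(M_{\log,n}-I_d)=(1+\pi)\vp^n(\pi^{r_d}N)$, and Corollary~\ref{cor:mellin} translates this back into the stated divisibility over $\calH_F$. The paper's proof is stated in a single line; you have filled in the factor-by-factor reduction over $\Phi_{k,m}(\gamma)$, the identity $\partial^m\bigl((1+\pi)\vp^n(g)\bigr)=(1+\pi)\vp^n\bigl((1+p^n\partial)^m g\bigr)$, and the $\psi$-bookkeeping (in particular the mild delicacy at $k=0$ where $\Phi_1(1+\pi)=q$ is not of the form $\vp(\bullet)$, resolved because $p$ is odd), all of which the paper leaves to the reader.
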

\begin{proof}
By Proposition~\ref{prop:matrixcong}, we have
\[
(1+\pi)\vp^n(M)\in (1+\pi)(I_d+\vp^n(\pi)^{r_d} M_{d\times d}(\Brig)).
\]
Therefore, our result follows from Corollary~\ref{cor:mellin}.
\end{proof}

\subsection{Integrality conditions on crystalline classes}
We shall now make use of the structure of $\NN(\T)$ to study crystalline classes (that is, elements in $H^1_f$). 
For any $z\in\HIw(F,\T)$, we define $ \fc_i(z),\fc_{i,n}(z)\in\calH_F $, $i=1,\ldots,d$ by the relations
\begin{equation}
\calL_{\T}(z)=\sum_{i=1}^d\fc_i(z)\otimes v_i= \sum_{i=1}^d \fc_{i,n}(z)\otimes\vp^n(v_i),
\label{eq:expansion}
\end{equation}
where $v_1,\ldots,v_d$ is a basis of $\Dcris(\T)$ as in the previous section and $\calL_\T$ is the regulator map defined in \eqref{eq:regulator}.
Our main goal in this section is to study the coefficients $\fc_{i,n}(z)$ when $z$ comes from classes in $H^1_f(F_n,T)$. Let us first prove two lemmas.

%\begin{theorem}
%The sequence of matrix $M_n$ converges to $M$.
%\end{theorem}

%\begin{lem}
%Each each of $M_n$ is a $\Zp$-linear combination of products of the form
%\[
%\prod_{i\in I}\vp^i\left(\frac{\mu^{r_{j_i}-b}q^{r_{j_i}}}{p^{r_{j_i}}}\right)
%\]
%where $I$ is a subset of $\{0,\ldots,n\}$ and $j_i\in\{1,\ldots,d\}$.
%\end{lem}

%\begin{corollary}
%Each entry of $M$ is a (possibly infinite) $\Zp$-linear combination of elements of the form
%\[
%\prod_{i\in I}\vp^i\left(\frac{\mu^{r_{j_i}-b}q^{r_{j_i}}}{p^{r_{j_i}}}\right)
%\]
%where $I$ is a subset of $\ZZ_{\ge0}$ and $j_i\in\{1,\ldots,d\}$.
%\end{corollary}

%\begin{defn}
%We call elements of the form $$\prod_{i\in I}\vp^i\left(\frac{\mu^{r_{j_i}-b}q^{r_{j_i}}}{p^{r_{j_i}}}\right)$$ \textit{type-$b$ logarithms}. 
%\end{defn}

%Note in particular that each type-$b$ logarithm is a factor of $\left(\frac{t}{\pi}\right)^b$.

%\begin{lem}
%If $i\ge n$, then $\vp^i\left(\frac{\mu^{r_{j}-b}q^{r_{j}}}{p^{r_{j}}}\right)\equiv 1\mod \vp^n(\pi)^b$ for all $j$.
%\end{lem}
%\begin{proof}
%By definition, we have for all integers $s\ge0$,
%\[(\mu q)^s\equiv p\mod \pi^{p-1}\quad\text{and}\quad\mu=\frac{p}{q-\pi^{p-1}}\equiv1\mod\pi.\]
%In particular, we have
%\[
%\frac{\mu^{r_{j}-b}q^{r_{j}}}{p^{r_{j}}}=\frac{(\mu q)^{r_j}}{p^{r_j}\mu^b}\equiv1\mod\pi^b
%\]
%for all $r_j$ since $b\le p-1$. Therefore,
%\[
%\vp^i\left(\frac{\mu^{r_{j}-b}q^{r_{j}}}{p^{r_{j}}}\right)\equiv1\mod\vp^i(\pi)^b
%\]
%for all $i\ge0$. Hence the result since $\vp^i(\pi)|\vp^n(\pi)$ whenever $i\ge n$.
%\end{proof}

\begin{lem}
 Let $x\in\NN(\T)^{\psi=1}$, then there exist unique elements $s_i(x)\in(\AQp)^{\psi=0}$ for $i=1,\ldots, d$ such that\begin{equation}\label{eq:1-phi}
(1-\vp)x=\sum_{i=1}^d s_i(x)\vp(n_i)=\begin{pmatrix}
s_1(x)&\cdots& s_d(x)
\end{pmatrix}PM\begin{pmatrix}
v_1\\ \vdots\\ v_d
\end{pmatrix},
\end{equation}
where $M$ and $P$ are the matrices as defined in the previous section.
\end{lem}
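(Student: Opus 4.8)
The plan is to write $(1-\vp)x$ in terms of $\vp(n_1),\dots,\vp(n_d)$ and then extract the condition "$\psi=0$" from the identity $\psi\circ\vp=\mathrm{id}$, using that $x$ is fixed by $\psi$.

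First I would pin down the right module. By \eqref{eq:actions} one has $\vp(n_i)=\sum_j (PM)_{ij}v_j$, and $\det(PM)=\det(P)\det(M)\neq 0$ in $\Brig$ (here $\det(P)\neq0$, and $\det(M)\neq0$ since $M\equiv I_d\bmod\pi^{r_d}$ by Proposition~\ref{prop:matrixcong}), so $\vp(n_1),\dots,\vp(n_d)$ are linearly independent over $\AQp$ and generate a free $\AQp$-module $\widetilde{\NN}$ of rank $d$. Because the matrix $P^{-1}$ has coefficients in $\AQp$ (noted right after Proposition~\ref{prop:Wachbasis}), the relations $n_i=\sum_j (P^{-1})_{ij}\vp(n_j)$ show $\NN(\T)\subseteq\widetilde{\NN}$. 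As $\vp$ preserves $\NN(\T)$, the element $(1-\vp)x$ lies in $\NN(\T)\subseteq\widetilde{\NN}$, so there are unique $c_1,\dots,c_d\in\AQp$ with $(1-\vp)x=\sum_{i=1}^d c_i\vp(n_i)$. This gives the uniqueness in the statement, and the displayed matrix identity is just the substitution $\vp(n_i)=\sum_j (PM)_{ij}v_j$.

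It then remains to check that each $c_i$ lies in $(\AQp)^{\psi=0}$. Applying $\psi$, and using $\psi\circ\vp=\mathrm{id}$ on $\NN(\T)$ together with $\psi(x)=x$ (which is exactly the hypothesis $x\in\NN(\T)^{\psi=1}$), we obtain $\psi\big((1-\vp)x\big)=\psi(x)-x=0$. On the other hand, the standard projection formula $\psi\big(f\,\vp(m)\big)=\psi(f)\,m$, valid for $f\in\AQp$ and $m\in\NN(\T)$, gives $\psi\big(\sum_i c_i\vp(n_i)\big)=\sum_i\psi(c_i)\,n_i$. Since $n_1,\dots,n_d$ is an $\AQp$-basis of $\NN(\T)$, comparing the two computations forces $\psi(c_i)=0$ for every $i$, and we take $s_i(x):=c_i$.

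The only point requiring a little care is that it is $P^{-1}$, not $P$, which is integral over $\AQp$ (the diagonal entries of $P$ involve $q^{-r_i}$), so that the $\vp(n_i)$ span a free module $\widetilde{\NN}$ \emph{containing} $\NN(\T)$ rather than contained in it; once $\widetilde{\NN}$ is identified, everything else is a one-line manipulation with $\psi$. I do not expect a genuine obstacle: beyond the explicit description of $\NN(\T)$ from the previous subsection, the only external input is the projection formula for $\psi$ on Wach modules, which is standard.
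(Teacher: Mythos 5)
Your proof is correct and follows essentially the same route as the paper. Where the paper simply cites \cite[\S3]{leiloefflerzerbes11} for the inclusion $\NN(\T)^{\psi=1}\xrightarrow{1-\vp}(\vp^*\NN(\T))^{\psi=0}$ and then reads off the coefficients in the basis $\vp(n_1),\ldots,\vp(n_d)$, you have unpacked that citation into a direct verification: the integrality of $P^{-1}$ gives $\NN(\T)\subseteq\vp^*\NN(\T)=\widetilde{\NN}$, freeness of $\widetilde{\NN}$ gives existence and uniqueness of the $c_i$, and $\psi\circ\vp=\mathrm{id}$ together with the projection formula $\psi(f\,\vp(m))=\psi(f)\,m$ forces $\psi(c_i)=0$. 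The net effect is a more self-contained argument with the same mathematical content.
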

\begin{proof}
Recall from \cite[\S3]{leiloefflerzerbes11} that we have the inclusion
\[
\NN(\T)^{\psi=1}\stackrel{1-\vp}{\longrightarrow}((\vp)^*\NN(\T))^{\psi=0}\subset(\Brig)^{\psi=0}\otimes\Dcris(\T).
\]
Here $(\vp)^*\NN(\T)$ denotes $\AQp\cdot \vp(\NN(\T))$.
Therefore, we may write \[(1-\vp)x=\sum_{i=1}^d s_i(x)\vp(n_i)\]
for some $s_i(x)\in(\AQp)^{\psi=0}$. Recall that $n_i=\sum_{j=1}^dm_{ij}v_j$. Hence, on applying $\vp$, we obtain
\[
\begin{pmatrix}
\vp(n_1)\\\vdots\\\vp(n_d)
\end{pmatrix}
=\vp(M)\begin{pmatrix}
\vp(v_1)\\\vdots\\\vp(v_d)
\end{pmatrix}=\vp(M)A\begin{pmatrix}
v_1\\\vdots\\v_d
\end{pmatrix},
\]
so we are done by \eqref{eq:actions}.
\end{proof}

\begin{lem}\label{lem:evaluation2}
Let $m\ge0$ and $n\ge1$ be integers,  $x\in(\Brig\otimes\Dcris(\T))^{\psi=1}$ and write $x=\sum_{i=1}^dx_iv_i$, where $x_i\in \Brig$. Then,
\[
\partial_{\T(-m)}\circ\vp^{-n}\left(x\cdot  e_{-m}\right)=\frac{1}{m!}\sum_{i=1}^d(\partial^mx_i)(\zeta_{p^n}-1)\otimes\vp^{-n}(v_i\cdot t^me_{-m}).
\]
\end{lem}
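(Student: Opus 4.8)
The plan is to unwind the definitions of the operators $\partial_{\T(-m)}$, $\vp^{-n}$, and the twisting data, and then reduce everything to a single scalar computation about how $\vp^{-n}$ and the differential operator $\partial$ interact on $\Brig$. First I would expand $x = \sum_i x_i v_i$ and tensor with $e_{-m}$, so that via \eqref{eq:twistDieudonne} and \eqref{eq:twistWach} we are looking at $x \cdot e_{-m} = \sum_i x_i \otimes (v_i \cdot t^{-(-m)}e_{-m})$, i.e. $\sum_i x_i \otimes (v_i t^m e_{-m})$ living in $\Brig[t^{-1}]\otimes\Dcris(\T(-m))$. Applying $\vp^{-n}:\Brig \to F_n[[t]]$ (extended to $\Brig[t^{-1}] \to F_n((t))$ and acting as the identity on $\Dcris$) sends this to $\sum_i \vp^{-n}(x_i) \otimes \vp^{-n}(v_i t^m e_{-m})$, where the $\Dcris$-part $\vp^{-n}(v_i t^m e_{-m})$ is exactly the target factor appearing in the statement. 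So the content is entirely in comparing the constant term (in $t$) of $\vp^{-n}(x_i)$ against $\frac{1}{m!}(\partial^m x_i)(\zeta_{p^n}-1)$.

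The key computation is the following: recall $\vp^{-n}$ sends $\pi \mapsto \zeta_{p^n}\exp(t/p^n) - 1$, hence $1+\pi \mapsto \zeta_{p^n}\exp(t/p^n)$, and therefore under $\vp^{-n}$ the operator $\partial = (1+\pi)\frac{d}{d\pi}$ transforms into $p^n \frac{d}{dt}$ (since $t = \log(1+\pi)$ and $\vp^{-n}(t)$ differs from $t$ only by an additive constant $\log\zeta_{p^n}$, whose derivative vanishes — more cleanly, $\partial$ is $\frac{d}{dt}$ on $\Brig$ and $\vp^{-n}$ scales $t$ by $p^{-n}$ up to a constant shift, so $\vp^{-n}\circ \partial = p^n \frac{d}{dt}\circ\vp^{-n}$). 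Wait — I should be careful: on $\Brig$ the operator $\partial$ equals $\frac{d}{dt}$ where $t=\log(1+\pi)$, and $\vp^{-n}(t) = t/p^n + \log\zeta_{p^n}$, so $\frac{d}{dt}\vp^{-n}(f) = \vp^{-n}((\partial f)) \cdot \frac{1}{p^n}$, giving $\vp^{-n}(\partial^m f) = p^{nm}\frac{d^m}{dt^m}\vp^{-n}(f)$. Hmm, but there is no $p^{nm}$ in the statement, so the $t^m$ in the target $\Dcris$-factor must absorb it: indeed $\vp^{-n}(t^m) = (t/p^n + \log\zeta_{p^n})^m$, and since we are extracting the coefficient of $t^0$ of the full product (that is what $\partial_{\T(-m)}$ does), we need the Taylor coefficient of $\vp^{-n}(x_i)$ that multiplies this. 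The cleanest route: write $\vp^{-n}(x_i) = \sum_{j\ge 0} c_j t^j$ as a power series in $t$ (valid since $x_i \in \Brig$); then $c_j = \frac{1}{j!}\cdot\frac{d^j}{dt^j}\vp^{-n}(x_i)\big|_{t=0} = \frac{1}{j!}\cdot \frac{1}{p^{nj}}\vp^{-n}(\partial^j x_i)\big|_{t=0} = \frac{1}{j!\,p^{nj}}(\partial^j x_i)(\zeta_{p^n}-1)$, using that evaluating $\vp^{-n}(\cdot)$ at $t=0$ is the same as setting $\pi = \zeta_{p^n}-1$. Combining with $\vp^{-n}(v_i t^m e_{-m})$ and carefully matching powers of $t$ so that the total $t$-degree is $0$, the $p$-powers cancel and one is left with the single term $\frac{1}{m!}(\partial^m x_i)(\zeta_{p^n}-1)\otimes \vp^{-n}(v_i t^m e_{-m})$, where I interpret $\vp^{-n}(v_i t^m e_{-m})$ as already carrying the $t^m$ factor (whose product with the $t^{-m}$-type contribution is what produces a nonzero constant term). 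So the identity follows by linearity in $i$.

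The main obstacle — and the step requiring genuine care rather than routine manipulation — is the bookkeeping of the various copies of $t$: there is a $t^m$ coming from the twist $e_{-m}$ (via $\Dcris(\T(-m)) = \Dcris(\T)t^{m}e_{-m}$, noting $(-(-m)) = m$), the powers of $t$ hidden inside the Taylor expansion of $\vp^{-n}(x_i)$, and the constant $\log\zeta_{p^n}$ that appears when $\vp^{-n}$ acts on $t$ itself. One must check that $\partial_{\T(-m)}$ — defined as "coefficient of $t^0$" — picks out exactly the $\frac{1}{m!}\partial^m$ term after all cancellations, and in particular that the $\log\zeta_{p^n}$ constants do not contribute (they do not, because they only shift the argument of the exponential and the final extraction of a single Taylor coefficient is insensitive to the additive normalization once one tracks which power of the "honest" variable $t$ is being read off). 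I would organize this by first treating $d=1$ and $v_i = 1$ (the scalar case), establishing $\partial_{\T(-m)}\circ\vp^{-n}(x\cdot e_{-m}) = \frac{1}{m!}(\partial^m x)(\zeta_{p^n}-1)\otimes \vp^{-n}(t^m e_{-m})$ by the Taylor-coefficient argument above, and then deducing the general case by $F$-linearity and the compatibility of $\vp^{-n}$ with tensoring by $\Dcris(\T)$.
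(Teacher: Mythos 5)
Your overall strategy — expand $x$ in the basis $v_i$, apply $\vp^{-n}$, and commute $\partial$ past $\vp^{-n}$ via $\partial\vp^{-n}=p^{-n}\vp^{-n}\partial$ — is the route the paper takes, and your Taylor-coefficient calculation $c_j = \frac{1}{j!\,p^{nj}}(\partial^j x_i)(\zeta_{p^n}-1)$ is correct. But there is a genuine gap at the very first step, and the later appeals to ``carefully matching powers of $t$'' do not close it.

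The problem is the decomposition of $x\cdot e_{-m}$. Under the identification $\Dcris(\T(-m))=\Dcris(\T)\cdot t^m e_{-m}$ from \eqref{eq:twistDieudonne}, the element $v_i\cdot e_{-m}$ equals $t^{-m}\cdot(v_i t^m e_{-m})$, so the correct expansion is
\[
x\cdot e_{-m}=\sum_{i=1}^d (t^{-m}x_i)\otimes(v_i t^m e_{-m})\in\Brig[t^{-1}]\otimes\Dcris(\T(-m)),
\]
with a $t^{-m}$ sitting in the $\Brig[t^{-1}]$ factor. You instead wrote $\sum_i x_i\otimes(v_i t^m e_{-m})$, dropping this $t^{-m}$, and as a result you assert that ``the content is entirely in comparing the \emph{constant} term of $\vp^{-n}(x_i)$'' with $\frac{1}{m!}(\partial^m x_i)(\zeta_{p^n}-1)$ --- which is false: the constant term of $\vp^{-n}(x_i)$ is $x_i(\zeta_{p^n}-1)$. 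Because of the $t^{-m}$, the map $\partial_{\T(-m)}$ (coefficient of $t^0$, with $\Dcris(\T(-m))$ treated as the coefficient module) picks out the coefficient of $t^m$ in $\vp^{-n}(x_i)$, i.e.\ $c_m=\frac{1}{m!\,p^{nm}}(\partial^m x_i)(\zeta_{p^n}-1)$; the leftover $p^{-nm}$ then pairs with $\vp^{-n}(v_i)\cdot t^m e_{-m}=p^{nm}\,\vp^{-n}(v_i\cdot t^m e_{-m})$ to give exactly the stated formula. Your attempt to recover the missing $t^m$-extraction by treating $v_i t^m e_{-m}$ as ``already carrying the $t^m$ factor'' conflates two different things: $v_i t^m e_{-m}$ is an atomic basis vector of $\Dcris(\T(-m))$ and does not contribute $t$-degree to the series in $F_n((t))$. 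If it did, you would be looking for the coefficient of $t^{-m}$ in $\vp^{-n}(x_i)$, which is $0$ since $\vp^{-n}(x_i)\in F_n[[t]]$.

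A secondary inconsistency: you state that $\vp^{-n}$ acts ``as the identity on $\Dcris$'', yet immediately afterwards you write $\vp^{-n}(v_i t^m e_{-m})$ as something nontrivial. In fact $\vp^{-n}$ acts on $\Dcris(\T)$ by the inverse Frobenius, and the scalar $\vp^{-n}(t^m e_{-m})=p^{-nm}t^m e_{-m}$ it produces is precisely the factor that absorbs the $p^{nm}$ coming from the commutation $\partial^m\vp^{-n}=p^{-nm}\vp^{-n}\partial^m$. With $\vp^{-n}=\mathrm{id}$ on $\Dcris$, this cancellation cannot happen and the bookkeeping of $p$-powers would not close up.
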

\begin{proof}
Let us write
\[
x\cdot e_{-m}=\sum_{i=1}^d(t^{-m}x_i)\otimes(v_i\cdot t^me_{-m})\in(\Brig[t^{-1}]\otimes\Dcris(\T(-m)))^{\psi=1}.
\]
If we apply $\vp^{-n}$ on both sides, we deduce that
\[
\vp^{-n}\left(x\cdot e_{-m}\right)=\sum_{i=1}^d\left(t^{-m}x_i(\zeta_{p^n}\exp(t/p^n)-1)\right)\otimes\left(\vp^{-n}(v_i)\cdot t^me_{-m}\right)
\]
as $\vp^{-n}(t^{-m})=p^{-nm}t^{-m}$ and $\vp^{-n}(t^m\cdot e_{-m})=p^{nm}t^m\cdot e_{-m}$.
Therefore, when we apply the map $\partial_{\T(-m)}$, what we get on the right-hand side are the coefficients of $t^m$ in $$x_i(\zeta_{p^n}\exp(t/p^n)-1)=\vp^{-n}(x_i), $$
$i=1,\ldots,d$, tensored by $\vp^{-n}(v_i)\cdot t^me_{-m}$.
 The said coefficients in $\vp^{-n}(x_i)$ can be obtained from applying $\frac{1}{m!}\frac{d^m}{dt^m}|_{t=0}=\frac{1}{m!}\partial^m|_{t=0}$. However, we in fact have the relation $p^{mn}\partial^m\vp^{-n}=\vp^{-n}\partial^m$, which implies that
\[
\frac{1}{m!}\partial^m\vp^{-n}(x_i)=\frac{1}{p^{mn}m!}\vp^{-n}\partial^m(x_i).
\]
In other words,
\begin{align*}
\partial_{\T(-m)}\circ\vp^{-n}\left(x\cdot e_{-m}\right)&=\frac{1}{p^{mn}m!}\sum_{i=1}^d\vp^{-n}\partial^m(x_i)|_{t=0}\otimes\left(\vp^{-n}(v_i)\cdot t^me_{-m}\right)\\
&=\frac{1}{m!}\sum_{i=1}^d\vp^{-n}\partial^m(x_i)|_{t=0}\otimes\vp^{-n}(v_i\cdot t^me_{-m}).
\end{align*}
Given an element  $R\in\Brig$, we have $\vp^{-n}(R)|_{t=0}=R(\zeta_{p^n}-1)$. Hence the result.
\end{proof}

Using the second part of Theorem~\ref{thm:berger}, we deduce similarly the following lemma.

\begin{lem}\label{lem:evaluation2*}
Let $m\ge0$ be an integer,  $x=\sum_{i=1}^dx_iv_i\in(\Brig\otimes\Dcris(\T))^{\psi=1}$be as in Lemma~\ref{lem:evaluation2}. Then,
\[
\partial_{\T(-m)}\circ(1-p^{-1}\vp^{-1})\left(x\cdot  e_{-m}\right)=\frac{1}{m!}\sum_{i=1}^d(\partial^mx_i)(0)\otimes(1-p^{m-1}\vp^{-1})(v_i\cdot t^me_{-m}).
\]
\end{lem}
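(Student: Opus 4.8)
The plan is to give a line-by-line repetition of the proof of Lemma~\ref{lem:evaluation2}, the single change being that the operator $\vp^{-n}$ (which is attached to the $n\ge1$ clause of Theorem~\ref{thm:berger}) is replaced by the operator $1-p^{-1}\vp^{-1}$ (attached to its $n=0$ clause). I would first record the ``$n=0$'' version of the computation in Lemma~\ref{lem:evaluation2}: writing $x\cdot e_{-m}=\sum_{i=1}^{d}(t^{-m}x_i)\otimes(v_i\cdot t^me_{-m})$ as there, the map $\partial_{\T(-m)}$ extracts the coefficient of $t^0$ in $t^{-m}x_i$, i.e. the coefficient of $t^m$ in $x_i$; since $\partial=(1+\pi)\frac{d}{d\pi}$ is $\frac{d}{dt}$ under $t=\log(1+\pi)$ and ``$t=0$'' is ``$\pi=0$'', this equals $\frac{1}{m!}(\partial^mx_i)(0)$, with no power of $p$ appearing because formally $n=0$ (so $\vp^{-0}=\mathrm{id}$, $\zeta_{p^0}-1=0$, and the two factors $\vp^{-0}(t^{-m})$, $\vp^{-0}(t^me_{-m})$ whose powers of $p$ cancel in Lemma~\ref{lem:evaluation2} are both trivial here). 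Hence $\partial_{\T(-m)}(x\cdot e_{-m})=\frac{1}{m!}\sum_i(\partial^mx_i)(0)\otimes v_i\cdot t^me_{-m}$.

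Next I would apply the remaining operator $1-p^{-1}\vp^{-1}$ to this element of $F\otimes\Dcris(\T(-m))$, exactly as the $n=0$ clause of Theorem~\ref{thm:berger} dictates. Under the identification $\Dcris(\T(-m))=\Dcris(\T)\cdot t^me_{-m}$ coming from \eqref{eq:twistDieudonne} and \eqref{eq:twistWach}, the Frobenius of $\Dcris(\T(-m))$ acts on the twisting factor $t^me_{-m}$ through a power of $p$ produced by $\vp(t^m)=p^mt^m$ (cf. Remark~\ref{rk:commute}); transporting this power of $p$ onto $\vp^{-1}$ turns $1-p^{-1}\vp^{-1}$, acting on $\Dcris(\T(-m))$, into the operator $1-p^{m-1}\vp^{-1}$ with $\vp^{-1}$ now read on the $\Dcris(\T)$-factor and $t^me_{-m}$ left passive. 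Applying this to each term $v_i\cdot t^me_{-m}$ of the above display gives the asserted formula; the relation $\partial^m\vp^{-1}=p^{-m}\vp^{-1}\partial^m$ (the inverse of $\partial^m\vp=p^m\vp\partial^m$ of Remark~\ref{rk:commute}) is what makes the $\vp^{-1}$-part of the bookkeeping run exactly as the identity part, keeping the evaluation point at $\pi=0$ throughout.

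The one step that genuinely needs care, precisely as in Lemma~\ref{lem:evaluation2}, is the bookkeeping of the powers of $p$: one must keep track of which $\vp^{-1}$ is acting at each stage (on $\Brig[t^{-1}]$, on $\Dcris(\T)$, or on $\Dcris(\T(-m))$) and check that the contributions of $\vp^{\pm 1}(t^{\pm m})$ and of $\partial^m\vp^{-1}=p^{-m}\vp^{-1}\partial^m$ combine to leave exactly the factor $p^{m-1}$ in front of $\vp^{-1}$. Everything else is a verbatim transcription of the argument for Lemma~\ref{lem:evaluation2}.
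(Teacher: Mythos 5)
Your overall strategy is exactly the paper's: its entire proof of this lemma is the one-sentence pointer that precedes it, directing the reader to redo the computation of Lemma~\ref{lem:evaluation2} with the $n=0$ clause of Theorem~\ref{thm:berger} in place of the $n\ge1$ clause. You carry out that replacement in the right way, writing $x\cdot e_{-m}=\sum_i(t^{-m}x_i)\otimes(v_i\cdot t^me_{-m})$, extracting the $t^0$-coefficient to get $\tfrac{1}{m!}\sum_i(\partial^mx_i)(0)\otimes v_i\cdot t^me_{-m}$, and then applying $1-p^{-1}\vp^{-1}$ on $\Dcris(\T(-m))$.

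The issue is with the bookkeeping in the step you yourself flag as the delicate one. You invoke $\vp(t^m)=p^mt^m$ and then claim that ``transporting this power of $p$ onto $\vp^{-1}$'' converts $p^{-1}\vp^{-1}$ into $p^{m-1}\vp^{-1}$; but transporting the \emph{inverse} Frobenius picks up the \emph{inverse} power. From $\vp(t^me_{-m})=p^mt^me_{-m}$ one gets $\vp^{-1}(t^me_{-m})=p^{-m}t^me_{-m}$, so under the identification $\Dcris(\T(-m))\cong\Dcris(\T)$ the operator $\vp^{-1}_{\T(-m)}$ corresponds to $p^{-m}\vp^{-1}_{\T}$ and hence $p^{-1}\vp^{-1}_{\T(-m)}$ becomes $p^{-m-1}\vp^{-1}_{\T}$, not $p^{m-1}\vp^{-1}_{\T}$. (This $p^{-mn}$, specialized to $n=1$, is exactly the factor that appears in the final displayed line of the paper's proof of Lemma~\ref{lem:evaluation2}.) The exponent $p^{m-1}$ you produce happens to match the lemma as printed, which appears to carry the same slip; it is harmless downstream because in the proof of Theorem~\ref{thm:PRdivisibility} only the injectivity of $1-p^{?}\vp^{-1}$, guaranteed by (H.eigen) for any integer exponent, is used. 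But the derivation you write out does not in fact yield the stated $p^{m-1}$: followed honestly, it yields $p^{-m-1}$, and the sentence asserting that the contributions ``combine to leave exactly the factor $p^{m-1}$'' is where the argument as written breaks.
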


\begin{theorem}\label{thm:PRdivisibility}
Let $0\le m\le r_d-1$ and $n\ge1$ be integers. Fix an element $z\in\HIw(F,\T)$ such that $\pr_{\T(-m),n}(z\cdot e_{-m})\in H^1_f(F_n,\T(-m))$. If $\fc_{i,n}(z)\in\calH_F$ are the elements defined in \eqref{eq:expansion}, then for $i=1,\ldots,d$,
\[
\fc_{i,n}(z)=\omega_{n-1,m}(\gamma)\tfc_{i,n}(z)
\]
for some $\tfc_i^n(z)\in\calH_F$. Furthermore, 
\[
\tfc_{i,n}(z)\equiv \tilde{\ft}_i\mod\omega_{n-1,\{0,\ldots,r_d-1\}}(\gamma)\calH_F
\]
for some $ \tilde{\ft}_i\in\Lambda_F$.
\end{theorem}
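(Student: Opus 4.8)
The plan is to convert the crystalline hypothesis into a divisibility statement for the power series $x_i$ attached to $z$ by the Herr isomorphism, and then to read off the two assertions from the explicit description of $\calL_{\T}$ in terms of Wach modules.

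\emph{Step 1 (translating the hypothesis).} Put $x=(h^1_{\T})^{-1}(z)\in\DD(\T)^{\psi=1}=\NN(\T)^{\psi=1}$ and write $x=\sum_{i=1}^d x_iv_i$ with $x_i\in\Brig$, using the inclusion $\NN(\T)\subset\Brig\otimes\Dcris(\T)$. Since corestriction carries $H^1_f$ into $H^1_f$ and $\pr_{\T(-m),n'}(z\cdot e_{-m})=\cor_{n/n'}\!\bigl(\pr_{\T(-m),n}(z\cdot e_{-m})\bigr)$, the hypothesis gives $\pr_{\T(-m),n'}(z\cdot e_{-m})\in H^1_f(F_{n'},\T(-m))$ for every $0\le n'\le n$. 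By the twisting compatibility of $h^1$ together with \eqref{eq:twistWach}, the preimage $(h^1_{\T(-m)})^{-1}(z\cdot e_{-m})$ is the $e_{-m}$-twist of $x$, which lies in $\DD_{\rm rig}(V(-m))^{\psi=1}\cap(\Brig[t^{-1}]\otimes\Dcris(\T(-m)))$; note that (H.eigen) passes to $\T(-m)$ (the eigenvalues of $\vp$ get multiplied by a fixed power of $p$), so $H^1_f=H^1_g=\ker\exp^*$ for $V(-m)$ at every level. Feeding this through Theorem~\ref{thm:berger} (its first formula for $n'\ge 1$, its second for $n'=0$), Lemmas~\ref{lem:evaluation2} and~\ref{lem:evaluation2*}, the invertibility of $1-p^{m-1}\vp^{-1}$ on $\Dcris(\T(-m))$ (again by (H.eigen)), and the $F_{n'}$-linear independence of the elements $\vp^{-n'}(v_i\cdot t^me_{-m})$, the condition $\pr_{\T(-m),n'}(z\cdot e_{-m})\in H^1_f$ becomes $(\partial^mx_i)(\zeta_{p^{n'}}-1)=0$ for every $i$. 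As $\partial^mx_i\in\Brig$ is analytic on the open unit disc and the points $\zeta_{p^{n'}}-1$ $(0\le n'\le n)$ are precisely the simple zeros of $(1+\pi)^{p^n}-1=\pi\prod_{j=1}^n\Phi_j(1+\pi)$, we deduce $(1+\pi)^{p^n}-1\mid\partial^mx_i$ in $\Brig$ for all $i$.

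\emph{Step 2 (the factor $\omega_{n-1,m}(\gamma)$).} Using $\calL_{\T}(z)=(\fM_F^{-1}\otimes1)\bigl((1-\vp)x\bigr)$, the identity $(1-\vp)x=\sum_j\bigl(x_j-\sum_i\vp(x_i)A_{ij}\bigr)v_j$, and the change of basis $\vp^n(v_i)=\sum_j(A^{[n]})_{ij}v_j$ with $A^{[n]}:=\vp^{n-1}(A)\cdots\vp(A)A$ (whose inverse is integral, since each $\vp^k(A)^{-1}=\sigma^k(A_0)^{-1}\operatorname{diag}(p^{r_1},\dots,p^{r_d})\in M_{d\times d}(\calO_F)$), one obtains a formula expressing $\fM_F(\fc_{i,n}(z))$ as an $\calO_F$-linear combination of the $x_j$ and the $\vp(x_j)$. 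Applying $\partial^m$, using $\partial^m\vp=p^m\vp\partial^m$ and $\vp\bigl((1+\pi)^{p^n}-1\bigr)=(1+\pi)^{p^{n+1}}-1$ (divisible by $(1+\pi)^{p^n}-1$), Step~1 yields $(1+\pi)^{p^n}-1\mid\partial^m\fM_F(\fc_{i,n}(z))$ in $\Brig$, hence $\prod_{j=1}^n\Phi_j(1+\pi)\mid\partial^m\fM_F(\fc_{i,n}(z))$. Since the factors $\Phi_{k,m}(\gamma)$, $0\le k\le n-1$, of $\omega_{n-1,m}(\gamma)$ are pairwise coprime, Corollary~\ref{cor:mellin} applied to each of them gives $\omega_{n-1,m}(\gamma)\mid\fc_{i,n}(z)$ over $\calH_F$; set $\tfc_{i,n}(z)=\fc_{i,n}(z)/\omega_{n-1,m}(\gamma)\in\calH_F$.

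\emph{Step 3 (the congruence modulo $\omega_{n-1,\{0,\dots,r_d-1\}}(\gamma)$).} Re-expand $(1-\vp)x$ through the Wach module: by the lemma preceding Lemma~\ref{lem:evaluation2}, $(1-\vp)x=\sum_i s_i(x)\vp(n_i)$ with $s_i(x)\in(\AQp)^{\psi=0}$; rewriting $\vp(n_i)=\sum_k Q^{(n)}_{ik}\vp^n(n_k)$ with $Q^{(n)}:=\vp(P)^{-1}\cdots\vp^{n-1}(P)^{-1}\in M_{d\times d}(\AQp)$ (Proposition~\ref{prop:Wachbasis}) gives $(1-\vp)x=\sum_k\tilde{s}_k\vp^n(n_k)$, where $\tilde{s}_k:=\sum_i s_i(x)Q^{(n)}_{ik}$; one checks $\tilde{s}_k\in(\AQp)^{\psi=0}$, so $\lambda_k:=\fM_F^{-1}(\tilde{s}_k)\in\Lambda_F$. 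Expanding $\vp^n(n_k)=\sum_l\vp^n(m_{kl})v_l$ and using $M\in I_d+\pi^{r_d}M_{d\times d}(\Brig)$ (Proposition~\ref{prop:matrixcong}), we get $\fM_F(\fc_{l,n}(z))=\tilde{s}_l+\sum_k\tilde{s}_k\bigl(\vp^n(M)-I_d\bigr)_{kl}$ with $\bigl(\vp^n(M)-I_d\bigr)_{kl}\in\vp^n(\pi)^{r_d}\Brig$. The leading term $\tilde{s}_l$ contributes $\fM_F^{-1}(\tilde{s}_l)=\lambda_l\in\Lambda_F$. For the correction term one controls the two relevant divisibilities: Corollary~\ref{cor:logmellin} ($\omega_{n-1,\{0,\dots,r_d-1\}}(\gamma)\mid M_{\log,n}-I_d$, where $M_{\log,n}=\fM_F^{-1}((1+\pi)\vp^n(M))$) provides divisibility by $\omega_{n-1,\{0,\dots,r_d-1\}}(\gamma)$, and an argument parallel to Step~2 — applying $\partial^m$ and using that $\partial^{m-j}$ of a multiple of $\vp^n(\pi)^{r_d}$ is a multiple of $\vp^n(\pi)^{r_d-(m-j)}$, together with $m<r_d$ and Step~1 — provides an additional factor $\omega_{n-1,m}(\gamma)$. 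Since $\omega_{n-1,m}(\gamma)\mid\omega_{n-1,\{0,\dots,r_d-1\}}(\gamma)$ while the complementary factor is coprime to $\omega_{n-1,m}(\gamma)$, these combine to show that $\fM_F^{-1}$ of the correction term lies in $\omega_{n-1,m}(\gamma)\,\omega_{n-1,\{0,\dots,r_d-1\}}(\gamma)\,\calH_F$. Combining with Step~2, $\omega_{n-1,m}(\gamma)\mid\lambda_l$ in $\calH_F$; as $\omega_{n-1,m}(\gamma)$ is (up to a unit, and a possible factor $\gamma-1$) a distinguished polynomial, $\Lambda_F\cap\omega_{n-1,m}(\gamma)\calH_F=\omega_{n-1,m}(\gamma)\Lambda_F$, so $\lambda_l=\omega_{n-1,m}(\gamma)\tilde{\ft}_l$ with $\tilde{\ft}_l\in\Lambda_F$. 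Dividing $\fc_{l,n}(z)=\lambda_l+\fM_F^{-1}(\text{correction})$ by $\omega_{n-1,m}(\gamma)$ then gives $\tfc_{l,n}(z)\equiv\tilde{\ft}_l\pmod{\omega_{n-1,\{0,\dots,r_d-1\}}(\gamma)\calH_F}$.

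The hard part is the last bookkeeping in Step~3: obtaining the \emph{extra} factor $\omega_{n-1,m}(\gamma)$ on the correction term on top of the $\omega_{n-1,\{0,\dots,r_d-1\}}(\gamma)$ coming directly from Corollary~\ref{cor:logmellin}. This requires pushing the crystalline vanishing of Step~1 through the literal products $\tilde{s}_k\cdot\vp^n(m_{kl})$ in $\Brig$ — which are not $\Lambda_F$-module operations, so one must argue via $\partial$-operators and the $\vp^n(\pi)^{r_d}$-divisibility of $\vp^n(M)-I_d$ rather than via Mellin formally — then assemble the two divisibilities using the coprimality of $\omega_{n-1,m}(\gamma)$ with the complementary factor of $\omega_{n-1,\{0,\dots,r_d-1\}}(\gamma)$ and Weierstrass preparation in $\Lambda_F$. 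The remaining steps (Steps~1 and~2) are essentially formal once the cited results are in hand.
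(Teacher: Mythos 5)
Your Steps 1 and 2 are essentially the paper's: the crystalline hypothesis gives $(\partial^m x_i)(\zeta_{p^{n'}}-1)=0$ for $0\le n'\le n$ via Theorem~\ref{thm:berger} and Lemmas~\ref{lem:evaluation2} and \ref{lem:evaluation2*}, hence $\vp^n(\pi)\mid\partial^m x_i$, this is pushed through $1-\vp$ to give $\vp^n(\pi)\mid\partial^m\fM_F(\fc_{i,n}(z))$, and Corollary~\ref{cor:mellin} then yields $\omega_{n-1,m}(\gamma)\mid\fc_{i,n}(z)$. This part is fine.

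Step 3 is where you diverge from the paper, and there is a genuine gap. You rewrite $\fc_{l,n}(z)=\lambda_l+\fM_F^{-1}\bigl(\sum_k\tilde{s}_k(\vp^n(M)-I_d)_{kl}\bigr)$ and aim to show the correction term is divisible by $\omega_{n-1,m}(\gamma)\,\omega_{n-1,\{0,\dots,r_d-1\}}(\gamma)$, so that dividing by $\omega_{n-1,m}(\gamma)$ leaves a multiple of $\omega_{n-1,\{0,\dots,r_d-1\}}(\gamma)$. But the two divisibilities you actually exhibit — $\omega_{n-1,\{0,\dots,r_d-1\}}(\gamma)$ from the $\vp^n(\pi)^{r_d}$-divisibility of $\vp^n(M)-I_d$, and $\omega_{n-1,m}(\gamma)$ from the Leibniz/Step 1 argument — do not combine to the product, since $\omega_{n-1,m}(\gamma)$ divides $\omega_{n-1,\{0,\dots,r_d-1\}}(\gamma)$: to get the product you would need to show the correction term is divisible by $\omega_{n-1,m}(\gamma)^2$ in the $m$-eigenspace, i.e.\ a double-order vanishing at the relevant cyclotomic points. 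Your Leibniz computation does not deliver this: the $i=0$ term $\tau_j\cdot\partial^m(\vp^n(M)-I_d)_{jl}$ is only controlled to order $\vp^n(\pi)^{r_d-m}$, which is $\vp^n(\pi)^1$ when $m=r_d-1$, with $\tau_j$ itself carrying no extra divisibility. There is also no statement like Corollary~\ref{cor:mellin} in the paper that translates $\Phi_n(1+\pi)^2$-divisibility of $\partial^m\fM_F(f)$ into $\Phi_{n-1,m}(\gamma)^2$-divisibility of $f$.

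The paper avoids this problem entirely. It packages the relation as the $\calH_F$-linear identity $(\ft_1,\dots,\ft_d)M_{\log,n}=(\fc_{1,n},\dots,\fc_{d,n})$ (equation~\eqref{eq:matrixequation}), first deduces $\omega_{n-1,m}(\gamma)\mid\ft_i$ in $\Lambda_F$ (from $\omega_{n-1,m}(\gamma)\mid\fc_{i,n}$, $\omega_{n-1,m}(\gamma)\mid\omega_{n-1,\{0,\dots,r_d-1\}}(\gamma)$ and $M_{\log,n}\equiv I_d$; this is the same observation as your $\omega_{n-1,m}(\gamma)\mid\lambda_l$), then \emph{divides the entire matrix equation} by $\omega_{n-1,m}(\gamma)$ to get $(\tilde{\ft}_1,\dots,\tilde{\ft}_d)M_{\log,n}=(\tfc_{1,n},\dots,\tfc_{d,n})$, and finally applies $M_{\log,n}\equiv I_d\bmod\omega_{n-1,\{0,\dots,r_d-1\}}(\gamma)$ to this \emph{new} equation. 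The congruence for the tilde'd objects then drops out immediately — no extra factor of $\omega_{n-1,m}(\gamma)$ on the correction term is needed, because the correction term in the new equation is $\sum_k\tilde{\ft}_k(M_{\log,n}-I_d)_{kl}$, with the divided-out $\tilde{\ft}_k$'s rather than the original ones. Your concern about pushing $\Lambda_F$-module operations through literal $\Brig$-products is reasonable and should be examined carefully when justifying equation~\eqref{eq:matrixequation}; but having accepted that equation, the final step is simple division and does not require the double divisibility you are trying to prove.
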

\begin{proof}
Let $x=(h^1_\T)^{-1}(z)\in\NN(\T)^{\psi=1}$. Then \eqref{eq:PRregulator} says that
\[
\calL_{\T}(z)=(\fM^{-1}\otimes1)\circ(1-\vp)x.
\]

Write $x=\sum_{i=1}^dx_iv_i$ and $(1-\vp)x=\sum_{i=1}^du_{i,n}\vp^n(v_i)$ where $x_i\in\Brig$ and $u_{i,n}\in(\Brig)^{\psi=0}$. On comparing with \eqref{eq:expansion}, we have
\begin{equation}
\fM_F\left(\fc_{i,n}(z)\right)=u_{i,n}.\label{eq:transform}
\end{equation} Furthermore, we may deduce from \eqref{eq:phirelation} and \eqref{eq:1-phi} that
\begin{equation}\label{eq:relations}
\begin{split}
\begin{pmatrix}
u_{1,n}&\cdots&u_{d,n}
\end{pmatrix}=&
\begin{pmatrix}
s_1(x)&\cdots&s_d(x)
\end{pmatrix}PM(\vp^{n-1}(A)\cdots\vp(A)A)^{-1}\\
=&\begin{pmatrix}
s_1(x)&\cdots&s_d(x)
\end{pmatrix}\vp(P^{-1})\cdots \vp^{n-1}(P^{-1})\vp^{n}(M).
\end{split}
\end{equation}

By Theorem~\ref{thm:berger} and Lemma~\ref{lem:evaluation2}, the fact that $\pr_{\T(-m),n}(z\cdot e_{-m})\in H^1_f(F_n,\T(-m))$ implies that  $$(\partial^mx_i)(\zeta_{p^n}-1)=0.$$ Therefore, $\vp^{n-1}(q)|\partial^mx_i$ (over $\Brig$) for all $i$. 

Further, note that if $\pr_{\T(-m),n}(z\cdot e_{-m})\in H^1_f(F_n,\T(-m))$, then $\pr_{\T(-m),k}(z\cdot e_{-m})\in H^1_f(F_k,\T(-m))$ for all $0\le k\le n$.  On applying Lemma~\ref{lem:evaluation2} again, we have $\vp^{k-1}(q)|\partial^mx_i$ for $1\le k\le n$. By Lemma~\ref{lem:evaluation2*} and (H.eigen), we also have $\pi|\partial^mx_i$. Therefore, we deduce that $\vp^n(\pi)|\partial^mx_i$ for all $i$. On applying $1-\vp$, we see that $\vp^n(\pi)|\partial^mu_{i,n}$ for all $i$. The first part of the theorem now follows from \eqref{eq:transform} and Corollary~\ref{cor:mellin}.

Consider \eqref{eq:relations}. Let $\tau_1,\ldots, \tau_d\in(\AQp)^{\psi=0}$ be the elements defined by
\[
\begin{pmatrix}
\tau_1&\cdots&\tau_d
\end{pmatrix}=
\begin{pmatrix}
s_1(x)&\cdots&s_d(x)
\end{pmatrix}\vp(P^{-1})\cdots \vp^{n-1}(P^{-1}).
\]
We then have
\[
\begin{pmatrix}
\tau_1&\cdots&\tau_d
\end{pmatrix}\vp^n(M)=\begin{pmatrix}
u_{1,n}&\cdots&u_{d,n}
\end{pmatrix}.
\]
Recall from \cite[proof of Theorem~3.5]{leiloefflerzerbes10} that if $f\in\Brig$, then
\[
(\AQp\cdot\vp(f))^{\psi=0}
\]
is a free $\Lambda_F$-module of rank 1, with basis $(1+\pi)\vp(f)$. Therefore, there exist $\ft_i\in\Lambda_F$ such that
\[
\begin{pmatrix}
\ft_1&\cdots&\ft_d
\end{pmatrix}\cdot(1+\pi)\vp^n(M)=
\begin{pmatrix}
\tau_1&\cdots&\tau_d
\end{pmatrix}\vp^n(M)=\begin{pmatrix}
u_{1,n}&\cdots&u_{d,n}
\end{pmatrix}.
\]
We deduce from \eqref{eq:transform} that
\begin{equation}
\begin{pmatrix}
\ft_1&\cdots&\ft_d
\end{pmatrix}M_{\log,n}=\begin{pmatrix}
\fc_{1,n}(z)&\cdots \fc_{d,n}(z)
\end{pmatrix}.\label{eq:matrixequation}
\end{equation}
But $M_{\log,n}\equiv I_d\mod \omega_{n-1,\{0,\ldots, r_d-1\}}$ by Corollary~\ref{cor:logmellin}. Hence, the fact that $\omega_{n-1,m}(\gamma)|\fc_{i,n}(z)$ (over $\calH_F$) implies that $\omega_{n-1,m}(\gamma)|\ft_i$ over $\calH_F$ for all $i$. Since $\ft_i\in\Lambda_F$ and $\omega_{n-1,m}(\gamma)$ is a monic polynomial in $\gamma-1$, this division is in fact over $\Lambda_F$. If we write $\tilde{\ft}_i=\ft_i/\omega_{n-1,m}(\gamma)\in\Lambda_F$, then \eqref{eq:matrixequation} tells us that
\[
\begin{pmatrix}
\tilde{\ft}_1&\cdots&\tilde{\ft}_d
\end{pmatrix}M_{\log,n}=\begin{pmatrix}
\tfc_{1,n}(z)&\cdots \tfc_{d,n}(z)
\end{pmatrix}.
\]
Hence,
\[
\tilde{\ft}_i\equiv\tfc_{i,n}(z)\mod\omega_{n-1,\{0,\ldots,r_d-1\}}(\gamma)\calH_F
\]
as required.
\end{proof}

Let $S_{n,m}$ be the set of elements in $\calH_F$ that are coprime to $\omega_{n-1,m}(\gamma)$. If $M$ is a $\calH_F$-module, we write $S_{n,m}^{-1}M$ for the localization of $M$ with respect to $S_{n,m}$.

\begin{corollary}\label{cor:regulator}
Let $m$ and $z$ be as above.  Then,
\[\frac{\calL_{\T}(z)}{\prod_{j=0}^{r_d-1}\ell_j}\in S^{-1}_{n,m}\calH_F\otimes\Dcris(\T).\]
Furthermore, there exist $\fs_{i,m}\in\Lambda_F$, $i=1,\ldots, d$ such that
\[
\frac{\calL_{T}(z)}{\prod_{j=0}^{r_d-1}\ell_j}\equiv p^{n}\sum_{i=1}^d\fs_{i,m}\otimes\vp^n(v_i)\mod\omega_{n-1,m}(\gamma)S_{n,m}^{-1} \calH_F\otimes\Dcris(\T).
\] 
\end{corollary}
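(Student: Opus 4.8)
The plan is to substitute Theorem~\ref{thm:PRdivisibility} into the expansion \eqref{eq:expansion} and then to compare $\prod_{j=0}^{r_d-1}\ell_j$ with $\omega_{n-1,m}(\gamma)$ after localising at $S_{n,m}$. By Theorem~\ref{thm:PRdivisibility} and \eqref{eq:expansion}, the hypothesis $\pr_{\T(-m),n}(z\cdot e_{-m})\in H^1_f(F_n,\T(-m))$ gives
\[
\calL_{\T}(z)=\omega_{n-1,m}(\gamma)\sum_{i=1}^d\tfc_{i,n}(z)\otimes\vp^n(v_i),
\]
where $\tfc_{i,n}(z)\in\calH_F$ and $\tfc_{i,n}(z)\equiv\tilde{\ft}_i\bmod\omega_{n-1,\{0,\ldots,r_d-1\}}(\gamma)\calH_F$ for some $\tilde{\ft}_i\in\Lambda_F$. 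Since $0\le m\le r_d-1$, the polynomial $\omega_{n-1,m}(\gamma)$ divides $\omega_{n-1,\{0,\ldots,r_d-1\}}(\gamma)$, so this last congruence holds in particular modulo $\omega_{n-1,m}(\gamma)\calH_F$.

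Next I would isolate two elementary facts relating $\ell_j$ and $\omega_{n-1,m}(\gamma)$ (of the kind recorded in the first appendix). First, using $\log(u^{-m}\gamma)=(u^{-m}\gamma-1)\prod_{k\ge1}\Phi_{k,m}(\gamma)/p$ together with $\omega_{n-1,m}(\gamma)=\prod_{k=0}^{n-1}\Phi_{k,m}(\gamma)$, one gets $\ell_m=\omega_{n-1,m}(\gamma)\cdot\rho$ with $\rho=(p^{n-1}\log u)^{-1}\prod_{k\ge n}\Phi_{k,m}(\gamma)/p\in\calH_F$; the zeros of $\rho$ avoid those of $\omega_{n-1,m}(\gamma)$, so $\rho\in S_{n,m}$ and hence $\omega_{n-1,m}(\gamma)/\ell_m=\rho^{-1}$ is a unit of $S_{n,m}^{-1}\calH_F$, and since $\Phi_{k,m}(\gamma)/p\equiv1\bmod\omega_{n-1,m}(\gamma)\calH_F$ for every $k\ge n$ (evaluate at a $p^{n-1}$-th root of unity of $u^{-m}\gamma$, where $\Phi_{p^k}$ takes the value $p$), we obtain $\omega_{n-1,m}(\gamma)/\ell_m\equiv p^{n-1}\log u\bmod\omega_{n-1,m}(\gamma)S_{n,m}^{-1}\calH_F$; finally $v_p(\log u)=v_p(u-1)=1$, so $p^{n-1}\log u=p^n w$ with $w\in\ZZ_p^\times$. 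Second, from $\ell_j=\frac{\log\gamma}{\log u}-j$ one has $\ell_j-\ell_m=m-j$, and since $\ell_m\equiv0\bmod\omega_{n-1,m}(\gamma)\calH_F$ while $1\le|m-j|\le r_d-1\le p-2$ by (H.FL) for each $j\in\{0,\dots,r_d-1\}\setminus\{m\}$, every such $\ell_j$ is congruent modulo $\omega_{n-1,m}(\gamma)\calH_F$ to the nonzero integer $m-j\in\ZZ_p^\times$; in particular $\ell_j\in S_{n,m}$ and $\prod_{j\ne m}\ell_j\equiv c\bmod\omega_{n-1,m}(\gamma)\calH_F$, with $c=\prod_{j\ne m}(m-j)\in\ZZ_p^\times$.

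Granting this, I would write $\prod_{j=0}^{r_d-1}\ell_j=\ell_m\cdot\prod_{j\ne m}\ell_j$ and
\[
\frac{\calL_{\T}(z)}{\prod_{j=0}^{r_d-1}\ell_j}=\frac{\omega_{n-1,m}(\gamma)}{\ell_m}\cdot\frac{1}{\prod_{j\ne m}\ell_j}\cdot\sum_{i=1}^d\tfc_{i,n}(z)\otimes\vp^n(v_i).
\]
By the second paragraph the first two factors lie in $S_{n,m}^{-1}\calH_F$, which proves the first assertion. Reducing modulo $\omega_{n-1,m}(\gamma)S_{n,m}^{-1}\calH_F$ and inserting $\omega_{n-1,m}(\gamma)/\ell_m\equiv p^n w$, $\prod_{j\ne m}\ell_j\equiv c$ and $\tfc_{i,n}(z)\equiv\tilde{\ft}_i$, one gets
\[
\frac{\calL_{\T}(z)}{\prod_{j=0}^{r_d-1}\ell_j}\equiv p^n\sum_{i=1}^d\fs_{i,m}\otimes\vp^n(v_i)\pmod{\omega_{n-1,m}(\gamma)S_{n,m}^{-1}\calH_F\otimes\Dcris(\T)},
\]
where $\fs_{i,m}:=wc^{-1}\tilde{\ft}_i\in\Lambda_F$ (note $wc^{-1}\in\ZZ_p^\times\subseteq\calO_F^\times\subseteq\Lambda_F$ and $\tilde{\ft}_i\in\Lambda_F$).

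The main obstacle is the first fact of the second paragraph: one must determine the exact scalar $\omega_{n-1,m}(\gamma)/\ell_m\equiv p^{n-1}\log u$ modulo $\omega_{n-1,m}(\gamma)$, not merely the divisibility $\omega_{n-1,m}(\gamma)\mid\ell_m$ in $\calH_F$ (which alone would only yield the inclusion assertion). This can be done either via the product expansion of $\log$ through the cyclotomic polynomials $\Phi_{k,m}$ or via an elementary L'H\^opital-type evaluation of $\ell_m/\omega_{n-1,m}(\gamma)$ at the $p^{n-1}$-th roots of unity of $u^{-m}\gamma$; once this is in place, the condition $r_d\le p-1$ of (H.FL) is exactly what guarantees that the cross terms $m-j$ are $p$-adic units, so that all the powers of $p$ combine into the single factor $p^n$.
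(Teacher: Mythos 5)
Your proof is correct and follows essentially the same route as the paper's: substitute Theorem~\ref{thm:PRdivisibility} into \eqref{eq:expansion}, factor $\prod_{j=0}^{r_d-1}\ell_j = \ell_m\cdot\prod_{j\ne m}\ell_j$, use the identity $\log(u^{-m}\gamma) = (u^{-m}\gamma-1)\prod_{k\ge1}\Phi_{k,m}(\gamma)/p$ to extract the factor $p^{n-1}\log u$ from $\omega_{n-1,m}(\gamma)/\ell_m$, invoke $v_p(\log u)=1$, and use (H.FL) to ensure the cross terms $m-j$ are $p$-adic units. The only cosmetic difference is that you re-derive the congruence $\Phi_{k,m}(\gamma)/p\equiv1\bmod\omega_{n-1,m}(\gamma)$ directly (via evaluation at the roots of $\omega_{n-1,m}$, which is in fact the sharp statement in the case $i=j=m$), whereas the paper appeals to the slightly more general Lemma~\ref{lem:modulo1}, and you re-derive Lemma~\ref{lem:modulounit} via $\ell_j-\ell_m=m-j$; both are the same appendix facts, used in the same way.
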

\begin{proof}
We use the same notation as in the proof of Theorem~\ref{thm:PRdivisibility}. We have
\begin{align*}
\frac{\calL_{\T}(z)}{\prod_{j=0}^{r_d-1}\ell_j}&=\frac{1}{\ell_{r_d-1}\cdots\ell_0}\sum_{i=1}^d\fc_{i,n}(z)\otimes\vp^n(v_i)\\
&=\frac{\omega_{n-1,m}(\gamma)}{\ell_{r_d-1}\cdots\ell_0}\sum_{i=1}^d\tfc_{i,n}(z)\otimes\vp^n(v_i),
\end{align*}
which gives the first part of the corollary as $\ell_j$ with $j\ne m$ and $\ell_m/\omega_{n-1,m}(\gamma)$ are all coprime to $\omega_{n-1,m}(\gamma)$.

Now, consider the factor
\[
\frac{\omega_{n-1,m}(\gamma)}{\ell_{r_d-1}\cdots\ell_0}=\frac{p^{(n-1)}\log(u)}{\prod_{i\ge n}\frac{\Phi_{i,m}(\gamma)}{p}}\prod_{j\ne m}\frac{1}{\ell_j}.
\]
On the one hand, we deduce from Lemma~\ref{lem:modulounit} and (H.FL) that $\ell_j$ is congruent to $(m-j)$ modulo $\omega_{n-1,m}(\gamma)$, which is a $p$-adic unit  for all $j\ne m$. On the other hand, Lemma~\ref{lem:modulo1} says that $\prod_{i\ge n}\frac{\Phi_{i,m}(\gamma)}{p}$ is  congruent to a constant in $1+p^n\Zp$. Hence we are done by the following fact
\[
v_p(\log(u))=v_p(u-1)=1.
\]
\end{proof}

\begin{corollary}\label{cor:inversePR}
Let $m$ and $z$ be as above. There exist elements $c_{i,m}\in(\AQp)^{\psi=0}$, $i=1,\ldots, d$ such that
\[
p^{ n}\sum_{i=1}^d\partial^mc_{i,m}\otimes\vp^n(v_i)\equiv\partial^m\circ\Omega_{\T,r_d}^{-1}(z)\mod\vp^n(\pi)S_{n,0}^{-1}(\Brig)^{\psi=0}\otimes\Dcris(\T).
\]
\end{corollary}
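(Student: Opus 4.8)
The plan is to translate the statement of Corollary~\ref{cor:regulator}, which concerns the regulator map $\calL_\T$, back into a statement about the Perrin-Riou exponential map $\Omega_{\T,r_d}^{-1}$ via the identity \eqref{eq:regulator}, namely $\calL_\T=\ell_{r_d-1}\cdots\ell_0\cdot(\fM_F^{-1}\otimes1)\circ\Omega_{\T,r_d}^{-1}$. Dividing by $\prod_{j=0}^{r_d-1}\ell_j$ and applying $\fM_F$ (together with \eqref{eq:twisting}, which turns multiplication by $\ell_j$ on the $\calH_F$-side into $\nabla_j$ on the $(\Brig)^{\psi=0}$-side) should convert the congruence of Corollary~\ref{cor:regulator} into a congruence for $\Omega_{\T,r_d}^{-1}(z)$ modulo $\fM_F(\omega_{n-1,m}(\gamma)\,\calH_F)$. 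By the Theorem quoted after \eqref{eq:twisting} (or rather its refinement Corollary~\ref{cor:mellin}), the ideal $\omega_{n-1,m}(\gamma)\calH_F$ corresponds, after applying $\fM_F$ and then $\partial^m$, to $\vp^n(\pi)(\Brig)^{\psi=0}$; this is exactly the modulus appearing in the statement, once one notes $\vp^{n-1}(q)\cdot\pi^{\,?}$-type bookkeeping collapses to $\vp^n(\pi)$ as in the proof of Theorem~\ref{thm:PRdivisibility}.

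First I would set $w_i=\fc_{i,n}(z)/\bigl(\prod_{j\ne m}\ell_j\bigr)$ — well-defined in $S_{n,m}^{-1}\calH_F$ by the first half of Corollary~\ref{cor:regulator} — and use \eqref{eq:regulator} to write $\Omega_{\T,r_d}^{-1}(z)=\fM_F\bigl(\calL_\T(z)/(\ell_{r_d-1}\cdots\ell_0)\bigr)$, localized appropriately. Then I would apply $\partial^m$ and use the fact, recorded in Remark~\ref{rk:commute} and the line after \eqref{eq:twisting}, that $\partial$ intertwines the relevant operators, to reduce to understanding $\partial^m\fM_F(\omega_{n-1,m}(\gamma)f)$ for $f\in S_{n,m}^{-1}\calH_F$. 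Corollary~\ref{cor:mellin} gives precisely that $\Phi_{n-1,m}(\gamma)\mid f$ over $\calH_F$ iff $\partial^m\fM_F(f)\in\Phi_n(1+\pi)(\Brig)^{\psi=0}$; combining the factor $\Phi_n(1+\pi)$ across $n$ (or, more directly, using that $\omega_{n-1,m}(\gamma)$ maps under $\fM_F$ followed by $\partial^m$ into $\vp^n(\pi)(\Brig)^{\psi=0}$, since $\vp^n(\pi)=\prod_{k=0}^n\Phi_k(1+\pi)$ up to units and the lower factors are absorbed by the $\pi\mid\partial^m x_i$ argument) yields the claimed modulus. Defining $c_{i,m}:=\fM_F(\fs_{i,m})$, which lies in $(\AQp)^{\psi=0}$ because $\fs_{i,m}\in\Lambda_F$ and $\fM_F(\Lambda_F)=(\AQp)^{\psi=0}$, and noting that $\fM_F$ commutes with $\partial^m$ up to the twisting \eqref{eq:twisting} so that $\partial^m c_{i,m}$ is the correct image, I would then transport the congruence of Corollary~\ref{cor:regulator} term by term.

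The main obstacle I anticipate is bookkeeping the exact power of $p$ and the exact modulus: Corollary~\ref{cor:regulator} has the factor $p^n$ on the distinguished term and the modulus $\omega_{n-1,m}(\gamma)S_{n,m}^{-1}\calH_F$, while the target statement wants $p^n$ and $\vp^n(\pi)S_{n,0}^{-1}(\Brig)^{\psi=0}\otimes\Dcris(\T)$ — note the subscript has changed from $S_{n,m}$ to $S_{n,0}$, because after applying $\partial^m$ the only surviving cyclotomic obstruction is $\Phi_n(1+\pi)$ (equivalently $\Phi_{n-1,0}(\gamma)$ on the $\calH_F$ side), not the twisted $\Phi_{n-1,m}$. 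Reconciling these requires care: one must verify that dividing out $\prod_{j\ne m}\ell_j$ and applying $\partial^m$ indeed ``untwists'' the $m$ back to $0$, which follows because $\Tw_{-m}$ and $\partial^{-m}$ are built to do exactly this (compare the interpolation formula \eqref{eq:PRXi} and diagram \eqref{eq:PRcommute}), but writing it cleanly is the crux. The integrality $c_{i,m}\in(\AQp)^{\psi=0}$ rather than merely $\calH_F$ follows formally from $\fs_{i,m}\in\Lambda_F$ together with $\fM_F(\Lambda_F)=(\AQp)^{\psi=0}$, so that part is routine.
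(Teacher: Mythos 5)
Your proposal is correct and follows essentially the same route as the paper: combine \eqref{eq:regulator} with Corollary~\ref{cor:regulator}, set $c_{i,m}=\fM_F(\fs_{i,m})\in(\AQp)^{\psi=0}$, apply $\partial^m$, and use Corollary~\ref{cor:mellin} to identify $\partial^m\fM_{F,S_{n,m}}\bigl(\omega_{n-1,m}(\gamma)S_{n,m}^{-1}\calH_F\bigr)$ with $\vp^n(\pi)S_{n,0}^{-1}(\Brig)^{\psi=0}$. Your hedging about ``$\fM_F$ commuting with $\partial^m$'' is unnecessary — one simply applies $\fM_F$ and then $\partial^m$ to both sides of the congruence from Corollary~\ref{cor:regulator}, acting on the first tensor factor only — but this does not affect the validity of the argument.
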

\begin{proof}
Let $\fM_{F,S_{n,m}}$ be the induced map from $S_{n,m}^{-1}\calH_F$ to $S_{n,m}^{-1}(\Brig)^{\psi=0}$. Then, by Corollary~\ref{cor:mellin}, we have
\[
\partial^m\fM_{F,S_{n,m}}\left(\omega_{n-1,m}(\gamma)S^{-1}_{n,m} \calH_F\right)=\vp^n(\pi)S_{n,0}^{-1}(\Brig)^{\psi=0}.
\]
Our result now follows from combining this with Corollary~\ref{cor:regulator} and \eqref{eq:regulator}.
\end{proof}

\begin{prop}\label{prop:inversePR}Let $c_{i,m}\in(\AQp)^{\psi=0}$ be any elements given by Corollary~\ref{cor:inversePR}. Then
\[
\pr_{\T(-m),n}(z)=(-1)^m(r_d-m-1)!p^{(m+1-r_d)n}\Sigma_{\T(-m),n}\circ\Tw_{-m}\left(\sum_{i=1}^dc_{i,m}\otimes v_i\right).
\]
\end{prop}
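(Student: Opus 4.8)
The plan is to relate the two sides through the Perrin-Riou exponential map $\Omega_{\T,r_d}$ and its interpolation formula. I would start from the defining property of the $c_{i,m}$ in Corollary~\ref{cor:inversePR}, which says that $p^n\sum_i\partial^m c_{i,m}\otimes\vp^n(v_i)$ agrees with $\partial^m\circ\Omega_{\T,r_d}^{-1}(z)$ modulo $\vp^n(\pi)S_{n,0}^{-1}(\Brig)^{\psi=0}\otimes\Dcris(\T)$. Applying $\Tw_{-m}$ and untwisting via the commutative diagram \eqref{eq:PRcommute} (with the representation $\T$, the weight $r_d$ and the twist $m$, noting $m\le r_d-1$ so $-m\ge 1-r_d$), this should identify $\Tw_{-m}(\sum_i c_{i,m}\otimes v_i)$ up to a sign $(-1)^m$ with an element whose image under $\Omega_{\T(-m),r_d-m}$ is, modulo a suitable error term, a unit multiple of $z\cdot e_{-m}$. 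More precisely, $\Tw_{-m}$ turns $\partial^m$ into the identity on the twisted module, so $p^n$ times $\Tw_{-m}(\sum_i c_{i,m}\otimes v_i)$ becomes congruent to $\Tw_{-m}\circ\Omega_{\T,r_d}^{-1}(z) = \pm\,\Omega_{\T(-m),r_d-m}^{-1}(z\cdot e_{-m})$ modulo $\pi\cdot(\text{localized})(\Brig)^{\psi=0}\otimes\Dcris(\T(-m))$.

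Next I would feed this into the interpolation formula \eqref{eq:PRXi}: with the twisted representation $\T(-m)$ in the role of $T$ there and taking $j=0$ (since after twisting the relevant index is $0\le r_d-m-1$), we get $\pr_{\T(-m),n}(\Omega_{\T(-m),r_d-m}(h)) = (r_d-m-1)!\,\Xi_{\T(-m),n}(h)$ for $h$ in the appropriate module — actually I would prefer to route this through $\Sigma$ rather than $\Xi$, using that $\Sigma_{\T(-m),n}(g) = p^{(r_d-m-1)n}\Xi_{\T(-m),n}(g)$, which is immediate from comparing the two definitions in Definition~\ref{defn:Sigman}. Combining the factor $(r_d-m-1)!$, the sign $(-1)^m$, the $p^n$ coming from Corollary~\ref{cor:inversePR}, and the conversion $\Xi\leftrightarrow\Sigma$ which contributes $p^{(m+1-r_d)n}$ against $p^{(r_d-m-1)n}$, I expect exactly the claimed identity $\pr_{\T(-m),n}(z) = (-1)^m(r_d-m-1)!\,p^{(m+1-r_d)n}\,\Sigma_{\T(-m),n}\circ\Tw_{-m}(\sum_i c_{i,m}\otimes v_i)$ to fall out.

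The main obstacle, and the step requiring genuine care, is the handling of the error terms: the congruence in Corollary~\ref{cor:inversePR} is only modulo $\vp^n(\pi)S_{n,0}^{-1}(\Brig)^{\psi=0}\otimes\Dcris(\T)$, so I need to argue that this ambiguity does not affect $\pr_{\T(-m),n}$. The point should be that $\pr_{\T(-m),n}\circ\Sigma_{\T(-m),n}\circ\Tw_{-m}$, when precomposed with $\fM_F^{-1}$, factors through evaluation at $\zeta_{p^n}-1$ after applying $\partial^m$ and dividing out the relevant solution $G$ to $(1-\vp)G=g$; since $\vp^n(\pi)$ vanishes at $\pi=\zeta_{p^n}-1$ and the localization denominators in $S_{n,0}$ are units at that point, any element of $\vp^n(\pi)S_{n,0}^{-1}(\Brig)^{\psi=0}\otimes\Dcris(\T)$ contributes zero to $\pr_{\T(-m),n}$ of the corresponding class. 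I would verify this compatibility carefully using Definition~\ref{defn:Sigman}, Remark~\ref{rk:commute} on how $\Tw_m$ interacts with $\vp$, and the fact (as in Theorem~\ref{thm:berger} and Lemma~\ref{lem:evaluation2}) that the relevant value only depends on $G^{\sigma^{-n}}(\zeta_{p^n}-1)$. Once this vanishing of the error term is established, the bookkeeping of powers of $p$ and the sign is routine.
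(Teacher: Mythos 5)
Your overall route matches the paper's proof closely: start from Corollary~\ref{cor:inversePR}, pass through $\Tw_{-m}$ and the commutative diagram \eqref{eq:PRcommute} to compare with $\Omega_{\T(-m),r_d-m}^{-1}(z\cdot e_{-m})$, feed into the interpolation formula \eqref{eq:PRXi} with $j=0$, and then translate $\Xi$ into $\Sigma$. Your explicit treatment of the $\vp^n(\pi)$ error term and why it dies under evaluation at $\zeta_{p^n}-1$ is also exactly the right justification, which the paper leaves tacit.

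However, there is a genuine bookkeeping gap in the $\Xi\leftrightarrow\Sigma$ step. The relation you state, $\Sigma_{\T(-m),n}(g) = p^{(r_d-m-1)n}\Xi_{\T(-m),n}(g)$, is not what Definition~\ref{defn:Sigman} gives: $\Xi_{T,n}$ applies $(p\otimes\vp)^{-n}$ \emph{inside} the argument of $\exp_{V,n}$ while $\Sigma_{T,n}$ does not, so the correct comparison is $p^{(r_d-m-1)n}\,\Xi_{\T(-m),n} = \Sigma_{\T(-m),n}\circ(p\otimes\vp)^{-n}$. This $(p\otimes\vp)^{-n}$ is not cosmetic: Corollary~\ref{cor:inversePR} hands you $p^{n}\sum_i\partial^m c_{i,m}\otimes\vp^n(v_i)$, that is, $(p\otimes\vp)^n$ applied to (the $\Dcris(\T)$-side of) $\Tw_{-m}(\sum_i c_{i,m}\otimes v_i)$, and it is precisely this $(p\otimes\vp)^n$ that is absorbed by the $(p\otimes\vp)^{-n}$ in the correct $\Xi$--$\Sigma$ relation. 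In your version you keep the scalar $p^n$ but silently drop the $\vp^n$ on the $\Dcris$ factor, so, traced literally, your chain of equalities would give an answer off by $(p\otimes\vp)^n$ (in particular by a stray $p^n$) from the stated formula. To close the gap, replace your $\Xi$--$\Sigma$ identity with the correct composed version, and note that the argument handed to $\Xi_{\T(-m),n}$ via Corollary~\ref{cor:inversePR} is exactly $(p\otimes\vp)^n$ of $\Tw_{-m}(\sum_i c_{i,m}\otimes v_i)$, so that the untwist is exact and no extraneous powers of $p$ or $\vp$ remain.
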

\begin{proof}
Let us write 
\[
c=\Tw_{-m}\left(\sum_{i=1}^dc_{i,m}\otimes v_i\right)\in(\AQp)^{\psi=0}\otimes\Dcris(\T(-m))\]
 and let
 \[
 c_n=p^{ n}\sum_{i=1}^d\partial^mc_{i,m}\otimes \vp^n(v_i)=(p\otimes \vp)^n(c)\cdot t^{-m}e_m.
 \]
  By the commutative diagram \eqref{eq:PRcommute}, we have
 \[
\Omega_{\T(-m),r_d-m}^{-1}(z\cdot e_{-m})=(-1)^m \Tw_{-m}\circ\Omega_{\T,r_d}^{-1}(z)=(-1)^m (\partial^m\otimes t^me_{-m})\circ\Omega_{\T,r_d}^{-1}(z).
\] 
Corollary~\ref{cor:inversePR} says that the last term is congruent to $$c_n\cdot t^me_{-m}=(p\otimes\vp)^n(c)\mod \vp^n(\pi).$$
Therefore, on applying \eqref{eq:PRXi}, we have
\[
\pr_{\T(-m),n}(z\cdot e_{-m})=(-1)^m(r_d-m-1)!\Xi_{\T(-m),n}\left((p\otimes\vp)^n(c)\right).
\]

By definitions,
\[
p^{(r_d-m-1)n}\Xi_{\T(-m),n}=\Sigma_{\T(-m),n}\circ (p\otimes \vp)^{-n}
\]
as the largest Hodge-Tate weight of $\T(-m)$ is $r_d-m$. Hence, we deduce that
\begin{align*}
\pr_{\T(-m),n}(z\cdot e_{-m})&=(-1)^m(r_d-m-1)!p^{-(r_d-m-1)n}\Sigma_{\T(-m),n}(c)
\end{align*}
  as required.
\end{proof}

\section{Upper bounds of Tamagawa numbers}

\subsection{Relation between $(\AQp)^{\psi=0}$ and $\calO_{F_n}$}

For $n\ge2$, we let $\calO_{F_n}^{\Tr=0}$ denote the kernel of $\Tr_{F_n/F_{n-1}}$ on $\calO_{F_n}$. By  \cite[Lemma~3.1]{lei10}, we have the decomposition
\begin{equation}
\calO_{F_n}=\bigoplus_{i=2}^n\calO_{F_i}^{\Tr=0}\oplus\calO_{F_1}
\label{eq:decomposition}
\end{equation}
as $\Zp$-modules and that 
\begin{equation}
\rank_{\Zp}\calO_{F_n}^{\Tr=0}=\phi(p^n)-\phi(p^{n-1})=p^{n-2}(p-1)^2.
\label{eq:tracekernelrank}
\end{equation}

\begin{lem}\label{lem:ker}
We have
\[
(\vp^{n-1}(q)\AQp)\cap(\AQp)^{\psi=0}
=
\begin{cases}
\vp^{n-1}(q)(\AQp)^{\psi=0}&\text{if $n\ge2$,}
\\
\vp(\pi)(\AQp)^{\psi=0}&\text{if $n=1$.}
\end{cases}
\]
\end{lem}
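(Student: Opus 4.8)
The plan is to reduce everything to the projection formula $\psi(\vp(a)b)=a\psi(b)$ (immediate from the definition of $\psi$) together with the facts that $\AQp$ is an integral domain and that $\vp$ is injective on it. I write $q=\vp(\pi)/\pi\in\AQp$, so that $\vp(\pi)=\pi q$ and, for $n\ge 2$, $\vp^{n-1}(q)=\vp\bigl(\vp^{n-2}(q)\bigr)$ with $\vp^{n-2}(q)$ a nonzero element of $\AQp$; note that $q$ itself does not lie in $\vp(\AQp)$, which is exactly what forces the case $n=1$ to be treated separately. The inclusion ``$\supseteq$'' is the easy one: in both cases the right-hand side sits inside $\vp^{n-1}(q)\AQp$ (for $n=1$ because $\vp(\pi)x=q\cdot\pi x$), and it sits inside $(\AQp)^{\psi=0}$ since, whenever $\psi(x)=0$, the projection formula gives $\psi(\vp^{n-1}(q)x)=\vp^{n-2}(q)\psi(x)=0$ for $n\ge 2$ and $\psi(\vp(\pi)x)=\pi\psi(x)=0$ for $n=1$.

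For ``$\subseteq$'' when $n\ge 2$, I would take $f=\vp^{n-1}(q)g$ with $g\in\AQp$ and $\psi(f)=0$, and apply the projection formula to get $0=\psi(f)=\vp^{n-2}(q)\psi(g)$; since $\AQp$ is a domain and $\vp^{n-2}(q)\ne 0$ this forces $\psi(g)=0$, i.e.\ $f\in\vp^{n-1}(q)(\AQp)^{\psi=0}$. Thus the case $n\ge 2$ is purely formal.

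For ``$\subseteq$'' when $n=1$, take $f=qg$ with $g\in\AQp$ and $\psi(f)=0$. Here it suffices to prove $\pi\mid g$: writing $g=\pi h$ then gives $f=\vp(\pi)h$, and $0=\psi(f)=\pi\psi(h)$ forces $\psi(h)=0$, so $f\in\vp(\pi)(\AQp)^{\psi=0}$. To prove $\pi\mid g$ I would expand $g=\sum_{i=0}^{p-1}(1+\pi)^i\vp(g_i)$ with $g_i\in\AQp$, so that $\psi(g)=g_0$; multiplying by $q=\sum_{j=0}^{p-1}(1+\pi)^j$ and regrouping the exponents modulo $p$ yields $\psi(qg)=g_0+(1+\pi)\sum_{i=1}^{p-1}g_i$. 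Hence $\psi(f)=0$ forces $g_0(0)=-\sum_{i=1}^{p-1}g_i(0)$, and evaluating $g=\vp(g_0)+\sum_{i=1}^{p-1}(1+\pi)^i\vp(g_i)$ at $\pi=0$, using $\vp(g_i)(0)=\sigma(g_i(0))$, gives $g(0)=\sigma\!\Bigl(\sum_{i=0}^{p-1}g_i(0)\Bigr)=0$. (Alternatively, one can use the averaging identity $\vp(\psi(f))=\tfrac1p\sum_{\zeta\in\mu_p}f(\zeta(1+\pi)-1)$ and $q(\zeta(1+\pi)-1)=\bigl((1+\pi)^p-1\bigr)/\bigl(\zeta(1+\pi)-1\bigr)$ to reduce $\psi(f)=0$ to $\sum_{\zeta\in\mu_p}g(\zeta(1+\pi)-1)/(\zeta(1+\pi)-1)=0$; only the $\zeta=1$ summand, equal to $g(\pi)/\pi$, has a pole at $\pi=0$, so $g(0)=0$.) I expect this divisibility $\pi\mid g$ to be the only genuinely computational point of the argument, everything else being formal consequences of the projection formula and of $\AQp$ being a domain.
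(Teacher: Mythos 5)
Your argument is correct, and for $n\ge 2$ it coincides with the paper's: apply $\psi$, use the projection formula, and cancel the nonzero factor $\vp^{n-2}(q)$. For $n=1$ you also reach the right conclusion (the inner factor is divisible by $\pi$, so the element lies in $\vp(\pi)(\AQp)^{\psi=0}$), but the paper gets there more directly: writing the inner factor as $f=f(0)+\pi g$, one has $\psi(qf)=\sigma^{-1}(f(0))+\pi\psi(g)$ (using $\psi(q)=1$ and $q\pi=\vp(\pi)$), and the constant term of this forces $f(0)=0$, hence also $\psi(g)=0$ — no need for the full expansion of $g$ in the $\vp$-basis or the averaging identity. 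So: same key idea, with the student's $n=1$ case being a somewhat longer route to the divisibility by $\pi$.
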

\begin{proof}
If $n\ge 2$, $\vp^{n-1}(q)f\in(\AQp)^{\psi=0}$ for some $f\in\AQp$, then
\[\psi(\vp^{n-1}(q)f)=\vp^{n-2}(q)\psi(f)=0.\]
This forces $f\in(\AQp)^{\psi=0}$.

For $n=1$, suppose that $\psi(qf)=0$, where $f\in\AQp$. We may write $f=f(0)+\pi g$ for some $g\in\AQp$. Then $\psi(qf(0)+q\pi g)=f(0)+\pi\psi(g)=0$, so $f(0)=\psi(g)=0$. This implies that $qf\in\vp(\pi)(\AQp)^{\psi=0}$ as claimed.
\end{proof}

\begin{lem}\label{lem:eval}
There is an isomorphism of $\Zp$-modules
\begin{align*}
\iota_n:(\AQp)^{\psi=0}/\vp^{n-1}(q)(\AQp)^{\psi=0}\cong \calO_{F_n}^{\Tr=0}.
\end{align*}
for $n\ge2$. When $n=1$, we have instead
\begin{align*}
{\iota}_1:(\AQp)^{\psi=0}/\vp(\pi)(\AQp)^{\psi=0}\cong \calO_{F_1}.
\end{align*}
\end{lem}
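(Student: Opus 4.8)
The plan is to construct the isomorphism $\iota_n$ explicitly by evaluating power series at $\epsilon_n - 1 = \zeta_{p^n} - 1$, mimicking the construction in the Mellin-transform picture. First I would recall that for $n \ge 1$ the map $\vp^{-n}\colon \AQp \to F_n[[t]]$ specializes, via $t \mapsto 0$, to an evaluation map $f \mapsto f(\zeta_{p^n}-1)$ landing in $\calO_{F_n}$ (the Teichmüller construction shows $\epsilon_n - 1 \in \calO_{\overline{\QQ_p}}$ reduces to $\zeta_{p^n}-1$, and since $F$ is unramified the coefficients stay integral). So define $\iota_n$ to be the map induced on $(\AQp)^{\psi=0}$ by $f \mapsto f^{\sigma^{-n}}(\zeta_{p^n}-1)$, or equivalently $f \mapsto (\vp^{-n}f)|_{t=0}$; one could also use $f \mapsto f(\zeta_{p^n}-1)$ directly and absorb the Frobenius twist, but the twisted version is cleaner for compatibility with the rest of the paper. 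The first thing to check is that the $\psi = 0$ condition forces the image into the trace-zero subspace $\calO_{F_n}^{\Tr=0}$ for $n \ge 2$: this is because $\Tr_{F_n/F_{n-1}}$ corresponds on power series to the operator $\vp \circ \psi$ (up to the usual normalization), so $\psi f = 0$ implies $\Tr_{F_n/F_{n-1}}(\iota_n(f)) = 0$. For $n = 1$ there is no trace condition to impose and the target is all of $\calO_{F_1}$.

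Next I would identify the kernel. The claim is that $\iota_n(f) = 0$ if and only if $f \in \vp^{n-1}(q)(\AQp)^{\psi=0}$ for $n \ge 2$, and $f \in \vp(\pi)(\AQp)^{\psi=0}$ for $n = 1$. The point is that $f^{\sigma^{-n}}(\zeta_{p^n}-1) = 0$ means $\vp^{n-1}(q) \mid f$ over $\AQp$, since $\vp^{n-1}(q) = \vp^{n-1}(\Phi_p(1+\pi))$ is (up to a unit) the minimal polynomial of $\zeta_{p^n}-1$ over $F$, i.e. the element of $\AQp$ whose only zero in the open disk at $\zeta_{p^n}-1$ is simple there. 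Actually more care is needed: $\vp^{n-1}(q)$ vanishes at all the $\zeta_{p^n}$-conjugates, and one must argue that divisibility by $\vp^{n-1}(q)$ in $\AQp$ is equivalent to vanishing at one (hence all) primitive $p^n$-th roots of unity; this is where Lemma~\ref{lem:ker} enters, intersecting with $(\AQp)^{\psi=0}$ to get the stated normal form of the kernel (with the $n=1$ exception $\vp(\pi)$ instead of $q$ coming from the $\psi$-structure as in that lemma's proof). For $n = 1$ the divisor of $\zeta_p - 1$ is $q/\pi$... but within $\psi=0$ series one gets $\vp(\pi)$, matching Lemma~\ref{lem:ker}.

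Finally I would check surjectivity, which I expect to be the main obstacle, and is best handled by a rank count rather than an explicit inverse. The source $(\AQp)^{\psi=0}/\vp^{n-1}(q)(\AQp)^{\psi=0}$ is a finitely generated free $\Zp$-module; using that $(\AQp)^{\psi=0}$ is free over $\Lambda_F$ of rank $1$ (basis $1+\pi$) and that $\Lambda_F/\vp^{n-1}(q)\Lambda_F$ — after identifying $\vp^{n-1}(q)$ with $\Phi_{n}(1+\pi)$ up to units, hence with $\Phi_{p^n}(\gamma)$-type data under the Mellin transform — has $\Zp$-rank $\phi(p^n) - \phi(p^{n-1}) = p^{n-2}(p-1)^2$, one matches \eqref{eq:tracekernelrank} exactly. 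Since $\iota_n$ is an injective $\Zp$-module map between free $\Zp$-modules of the same finite rank, it is surjective provided it is surjective modulo $p$; for that last point I would exhibit enough elements in the image, or invoke that the evaluation map $\AQp \to \calO_{F_n}$ is surjective (clear, as $\calO_{F_n} = \calO_F[\zeta_{p^n}]$ and $f \mapsto f(\zeta_{p^n}-1)$ already hits $\calO_F[\zeta_{p^n}-1] = \calO_{F_n}$) and that the decomposition \eqref{eq:decomposition} together with the $\psi$-decomposition of $\AQp$ lets one cut down to the $\Tr=0$ piece compatibly. For $n=1$ the same rank argument gives $\rank_{\Zp} = \phi(p) = p-1 = \rank_{\Zp}\calO_{F_1}$, and surjectivity of $f \mapsto f(\zeta_p - 1)$ onto $\calO_{F_1}$ closes the case.
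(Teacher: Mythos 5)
Your analysis of the kernel and of the trace-zero condition matches the paper's argument: the ring isomorphism $\AQp/\vp^{n-1}(q)\AQp\cong\calO_{F_n}$ via evaluation, combined with Lemma~\ref{lem:ker} to pin down the intersection with $(\AQp)^{\psi=0}$, gives injectivity, and the fact that $\psi f=0$ forces the image into $\calO_{F_n}^{\Tr=0}$ for $n\ge 2$ (the paper writes this out concretely as $\sum_{\eta^p=1}f(\eta(1+\pi)-1)=0$ evaluated at $\pi=\zeta_{p^n}-1$, whereas you invoke the $\vp\circ\psi\leftrightarrow\Tr_{F_n/F_{n-1}}$ dictionary; same content). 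The $\sigma^{-n}$-twist in your definition of $\iota_n$ is harmless but unnecessary --- the paper simply evaluates $f\mapsto f(\zeta_{p^n}-1)$.

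Where you genuinely diverge is surjectivity, and this is also where your argument is left incomplete. You propose a rank count: identify $(\AQp)^{\psi=0}$ with $\Lambda_F$ via Mellin, translate multiplication by $\vp^{n-1}(q)=\Phi_{p^n}(1+\pi)$ into multiplication by $\Phi_{p^{n-1}}(\gamma)$ (note: you wrote ``$\Phi_{p^n}(\gamma)$-type data,'' which is off by one in the index --- it is $\Phi_{p^{n-1}}(\gamma)$ that corresponds to $\Phi_{p^n}(1+\pi)$ per the first theorem of \S 2.1; the rank count $\phi(p^n)-\phi(p^{n-1})$ still comes out right), and then argue that an injection of finite free $\Zp$-modules of equal rank is an isomorphism once you check surjectivity mod $p$. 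That reduction is correct, but the crucial final check is only gestured at (``exhibit enough elements in the image, or invoke \dots''). The route you sketch --- $\AQp=\vp(\AQp)\oplus(\AQp)^{\psi=0}$ maps to $\calO_{F_{n-1}}\oplus\calO_{F_n}^{\Tr=0}$ component-by-component, so surjectivity of $\AQp\twoheadrightarrow\calO_{F_n}$ propagates to the $\psi=0$ summand --- would indeed close it, but you never carry it out. The paper sidesteps all of this with an explicit preimage: any $x\in\calO_{F_n}^{\Tr=0}$ can be written as $x=\sum_{1\le i\le p^n-1,\ (i,p)=1}a_i\zeta_{p^n}^i$ with $a_i\in\calO_F$, and then $f=\sum a_i(1+\pi)^i$ lies in $(\AQp)^{\psi=0}$ (since $\psi$ annihilates $(1+\pi)^i$ for $p\nmid i$) and satisfies $\iota_n(f)=x$. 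This is shorter, avoids any rank bookkeeping, and makes the Mellin/$\Lambda_F$-module discussion unnecessary. You should either complete the mod-$p$/component-surjectivity step in your approach, or switch to the paper's direct construction of preimages.
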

\begin{proof}
Since $\Phi_n$ is the minimal polynomial of $\zeta_{p^n}-1$ and $\vp^{n-1}(q)=\Phi_n(\pi+1)$ for $n\ge1$, we have an isomorphism of rings
\begin{equation}
\AQp/\vp^{n-1}(q)\AQp\cong\calO_{F_n}
\label{eq:isorings}
\end{equation}
given by the evaluation map $\iota_n:\pi\mapsto\zeta_{p^n}-1$. By Lemma~\ref{lem:ker}, the map $\iota_n$ restricted to $(\AQp)^{\psi=0}$ has kernel 
$\vp^{n-1}(q)(\AQp)^{\psi=0}$ when $n\ge2$, whereas for  $n=1$, the kernel is $\vp(\pi)(\AQp)^{\psi=0}$. Therefore, we have injections
\begin{align*}
\iota_n:(\AQp)^{\psi=0}/\vp^{n-1}(q)(\AQp)^{\psi=0}\hookrightarrow& \calO_{F_n},\\
{\iota}_1:(\AQp)^{\psi=0}/\vp(\pi)(\AQp)^{\psi=0}\hookrightarrow& \calO_{F_1}.
\end{align*}

Given an element $f\in(\AQp)^{\psi=0}$, we have
\[
\sum_{\eta^p=1}f(\eta(1+\pi)-1)=0.
\] 
Therefore, for $n\ge2$,
\[
\Tr_{F_n/F_{n-1}}f(\zeta_{p^n}-1)=0.
\]
Whereas,
\[
\Tr_{F_1/F_{0}}f(\zeta_{p}-1)=-f(0).
\]
This implies that $\iota_n(f)\in\calO_{F_n}^{\Tr=0}$ if $n\ge2$.  If $x\in \calO_{F_n}^{\Tr=0}$, then 
\[
x=\sum_{1\le i\le p^n-1,(i,p)=1}a_i\zeta_{p^n}^i
\]
for some $a_i\in\calO_F$. So, $x$ has a pre-image
\[
\sum_{1\le i\le p^n-1,(i,p)=1}a_i(1+\pi)^i\in(\AQp)^{\psi=0}
\]
under the map $\iota_n$. This gives our result for $n\ge2$.

Now consider $n=1$. Given $x\in\calO_{F_1}$, we have
\[
x=\sum_{i=1}^{p-1}a_i\zeta_{p}^i
\]
for some $a_i\in\calO_F$. So, if
\[
f=\sum_{i=1}^{p-1}a_i(1+\pi)^i,
\]
then $\iota_1(f)=x$. Hence we are done.
\end{proof}

\subsection{Bounding Tamagawa numbers}\label{S:tamagawa}

%\subsection{Tamagawa numbers}
%We now recall the definition of the local Tamagawa numbers. Let $K$ be a finite extension of $F$. We define
%\[
%L_f(T)=\det{}_{\Zp} H^0(K,T)\otimes\det{}_{\Zp}^{-1}H^1_f(K,T).
%\]
%Let $\omega$ be a basis of $\det_{\Qp}t_V(K)$ and $\omega^*$ its dual basis. The Tamagawa number with respect to $\omega$ is the unique power of $p$ defined by
%\[
%\iota_{K,V}(L_f(T))=\Zp\Tam_{K,\omega}(T)\omega^*
%\]
%where $\iota_{K,V}$ is the isomorphism $L_f(V)\rightarrow\det_{\Qp}^{-1}t_V(K)$. Our hypothesis (H.eigen) implies that the
%\[
%\Tam_{K,\omega}(T)=\frac{(H^1_f(K,T):\exp_{V,K}(M))}{\det(1-\vp^{[F:\Qp]}|F_n\otimes\Dcris(T))_{(p)}},
%\]
%where $M$ is a lattice of $K\otimes\Dcris(V)/\Fil^0\Dcris(V)$ such that $\omega$ generates $\det_{\Zp}(M)$ and $x_{(p)}$ denotes $p^{v_p(x)}$ if $x\in\Zp$.

Now, we assume that $T$ is a crystalline representation satisfying (H.eigen) and (H.FL) with Hodge-Tate weights in $[a,b]$, where $a\le 0$ and $b\ge1$ as before. Let $1\le r\le b$ be an integer such that the slopes of $\vp$ on $\Dcris(T)$ are all $\ge -r$. We fix two integers $s_1,s_2\ge0$ such that
\begin{align}
(p^{r}\vp)^k\Dcris(T)\subset\ & p^{-s_1}\Dcris(T)\ \forall k\ge0,\label{eq:neweigen}\\
(1-\vp)^{-1}\Dcris(T)\subset\ & p^{-s_2}\Dcris(T).\label{eq:newnewlattice}
\end{align}

 Let us first recall the definition of the Tamagawa number of $T$ over $F_n$ with respect to a basis $\omega$ of $\det_{\Qp}t_n(V)$. Let $\omega^*$ be the dual basis to $\omega$. Then, the Tamagawa number  is the unique power of $p$ defined by
\[
\iota(\det{}_{\Zp}^{-1}H^1_f(K,T))=\Zp\Tam_{K,\omega}(T)\omega^*,
\]
where $\iota$ is the isomorphism $\det{}_{\Qp}^{-1}H^1_f(F_n,V)\rightarrow\det_{\Qp}^{-1}t_n(K)$, which arises from the exact sequence
\[
0\rightarrow \Dcris(V)\rightarrow \Dcris(V)\oplus t_n(V)\rightarrow H^1_f(F_n,V)\rightarrow0
\]
which is a consequence of \eqref{eq:exact} and (H.eigen). Hence, if we fix a lattice $L$ in $t_n(V)$ such that $\omega$ generates $\det_{\Zp}L$, then 
\begin{equation}\label{eq:Tam}
\Tam_{F_n,\omega}(T)=\frac{\left(H^1_f(F_n,T):\exp_{V,n}(L)\right)}{\det_{\Zp}(1-\vp|\Dcris(V))_{(p)}}.
\end{equation}

From now on, we take $L_n$ to be the lattice $\calO_{F_n}\otimes \Dcris(T)/\Fil^0\Dcris(T)$ and fix a basis $\omega_n$ of $\det_{\Zp}L_n$. We write $d'$ for the $\Zp$-rank of $\Dcris(T)/\Fil^0\Dcris(T)$ and $d'_n:=d'\times[F_n:F]$, which is the $\Zp$-rank of $\calO_{F_n}\otimes\Dcris(T)/\Fil^0\Dcris(T)$. 

\begin{lem}\label{lem:scalarmultiplication}
For $m\in\ZZ$, let $L_{n,m}=p^mL_n$ and $\omega_{n,m}$ a basis of $\det_{\Zp}L_{n,m}$. We have the equality 
\[
\Tam_{F_n,\omega_{n,m}}(T)=p^{md'_n}\Tam_{F_n,\omega_n}(T).
\]
\end{lem}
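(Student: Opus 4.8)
The plan is to unwind everything from the explicit formula \eqref{eq:Tam}. Applying it with the lattice $L_{n,m}=p^mL_n$ (and basis $\omega_{n,m}$) in place of $L_n$ (and $\omega_n$), the denominator $\det_{\Zp}(1-\vp|\Dcris(V))_{(p)}$ is unchanged, so the statement reduces to showing
\[
\bigl(H^1_f(F_n,T):\exp_{V,n}(p^mL_n)\bigr)=p^{md'_n}\bigl(H^1_f(F_n,T):\exp_{V,n}(L_n)\bigr).
\]
Since $\exp_{V,n}$ is the connecting homomorphism of $\Qp$-vector spaces attached to \eqref{eq:exact}, it is $\Qp$-linear, whence $\exp_{V,n}(p^mL_n)=p^m\exp_{V,n}(L_n)$; so it suffices to compare the index of $\exp_{V,n}(L_n)$ with that of $p^m\exp_{V,n}(L_n)$ in $H^1_f(F_n,T)$.

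Next I would check that $\exp_{V,n}(L_n)$ is a $\Zp$-lattice of full rank $d'_n=\rank_{\Zp}L_n$ in $H^1_f(F_n,V)$. This is already implicit in \eqref{eq:Tam}, but it can be seen directly: in the exact sequence $0\to\Dcris(V)\to\Dcris(V)\oplus t_n(V)\to H^1_f(F_n,V)\to0$ used to define \eqref{eq:Tam}, the restriction of the quotient map to the summand $t_n(V)$ is $\exp_{V,n}|_{t_n(V)}$ (up to sign), and it is an isomorphism: it is injective because $(1-\vp)x=0$ forces $x=0$ by (H.eigen), and surjective because, modulo the image of $\Dcris(V)$, every class has a representative in $0\oplus t_n(V)$. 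Hence $\exp_{V,n}(L_n)$ spans $H^1_f(F_n,V)$, a space of dimension $d'_n=\dim_{\Qp}t_n(V)$.

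Finally it is pure $\Zp$-linear algebra: for full-rank $\Zp$-lattices $\Lambda'$ in a finite-dimensional $\Qp$-vector space containing $\Lambda$, the (multiplicatively extended) index satisfies $(\Lambda:p^m\Lambda')=p^{m\cdot\rank_{\Zp}\Lambda'}\,(\Lambda:\Lambda')$ for every $m\in\ZZ$; for $m\ge0$ this follows from $(\Lambda':p^m\Lambda')=p^{m\rank_{\Zp}\Lambda'}$ together with multiplicativity of indices, and for $m<0$ from multiplicativity again. Taking $\Lambda=H^1_f(F_n,T)$, $\Lambda'=\exp_{V,n}(L_n)$ and $\rank_{\Zp}\Lambda'=d'_n$ gives the displayed identity, and substituting it back into \eqref{eq:Tam} proves the lemma. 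The only point needing care is the convention for the index $(\,\cdot:\cdot\,)$ when $m<0$, so that $p^mL_n$ need not sit inside the integral structure: one must read it as a possibly-negative power of $p$ defined multiplicatively, after which there is no further difficulty.
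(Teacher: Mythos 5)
Your proof is correct and follows essentially the same route as the paper: the paper's own proof is a one-liner citing \eqref{eq:Tam} and the fact that $H^1_f(F_n,T)$ has $\Zp$-rank $d'_n$, and your argument simply unpacks those two facts (via $\Qp$-linearity of $\exp_{V,n}$ and the multiplicative index identity $(\Lambda:p^m\Lambda')=p^{m\,\rank_{\Zp}\Lambda'}(\Lambda:\Lambda')$) in more detail.
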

\begin{proof}
It follows from \eqref{eq:Tam} and the fact that the $\Zp$-rank of $H^1_f(F_n,T)$ is $d'_n$.
\end{proof}

From now on, we shall assume that the representation $T$ satisfies the following additional hypothesis.

\vs

\noindent
\underline{\textbf{(H.tor)}} $(T^\vee)^{G_{F_\infty}}=0$, where $T^\vee$ denotes the Pontryagin dual of $T$.

\vs

Under (H.tor), the inflation-restriction exact sequence implies that the corestriction map $H^1(F_m,T)\rightarrow H^1(F_n,T)$ is surjective for all $m\ge n$ (see for example \cite[Corollary~4.5]{lei10}). In particular, the projection $\HIw(F,T)\rightarrow H^1(F_n,T)$ is surjective.

\begin{lem}\label{lem:inclusion-expo}
Let $x\in H^1_f(F_n,T)$, then there exists $g\in(\AQp)^{\psi=0}\otimes \Dcris(T)$ such that
\[
\exp_{V,n}\circ\Theta_{T,n}(g)=x.
\]
\end{lem}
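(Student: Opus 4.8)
The plan is to reduce everything to the map $\Sigma_{T,n}$ of Definition~\ref{defn:Sigman}. Unwinding that definition together with the definition of $\Theta_{T,n}$, one sees that for every $g\in(\AQp)^{\psi=0}\otimes\Dcris(T)$,
\[
\exp_{V,n}\circ\Theta_{T,n}(g)=\exp_{V,n}\bigl(G^{\sigma^{-n}}(\zeta_{p^n}-1)\bigr)=p^{-(b-1)n}\,\Sigma_{T,n}(g),
\]
where $G\in(\Brig\otimes\Dcris(T))^{\psi=1}$ is the unique solution of $(1-\vp)G=g$ and $\exp_{V,n}$ is applied to the class of $G^{\sigma^{-n}}(\zeta_{p^n}-1)$ modulo $\Fil^0\Dcris(T)$; this uses only that the Bloch--Kato exponential depends on its argument solely through that class. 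Thus it suffices to produce $g\in(\AQp)^{\psi=0}\otimes\Dcris(T)$ with $\Sigma_{T,n}(g)=p^{(b-1)n}x$.

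To find such a $g$ I would lift $x$ to an Iwasawa class and apply Proposition~\ref{prop:inversePR}. By (H.tor) the projection $\HIw(F,T)\to H^1(F_n,T)$ is surjective, so pick $w\in\HIw(F,T)$ with $\pr_{T,n}(w)=x$. With $\T=T(-a)$ as in \S\ref{S:wach}, whose Hodge--Tate weights are $r_1=0,\dots,r_d=b-a$, set $z=w\cdot e_{-a}\in\HIw(F,\T)$; then $z\cdot e_{a}=w$, so $\pr_{\T(a),n}(z\cdot e_{a})=\pr_{T,n}(w)=x\in H^1_f(F_n,T)$. Since $a\le0$ and $b\ge1$, the integer $m:=-a$ satisfies $0\le m\le r_d-1$, and $\T(-m)=T$ with $r_d-m=b$; hence Proposition~\ref{prop:inversePR} applies to $z$ with this $m$ and produces elements $c_{i,-a}\in(\AQp)^{\psi=0}$ (arising from Corollary~\ref{cor:inversePR}) such that, putting
\[
g':=\Tw_{a}\Bigl(\sum_{i=1}^d c_{i,-a}\otimes v_i\Bigr)\in(\AQp)^{\psi=0}\otimes\Dcris(T),
\]
one has $x=\pr_{\T(a),n}(z\cdot e_{a})=(-1)^{-a}(b-1)!\,p^{(1-b)n}\,\Sigma_{T,n}(g')$.

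Finally, since $\Sigma_{T,n}$ is $\Zp$-linear and $(-1)^{-a}(b-1)!\in\ZZ$, the element $g:=(-1)^{-a}(b-1)!\,g'$ again lies in $(\AQp)^{\psi=0}\otimes\Dcris(T)$ and satisfies $p^{-(b-1)n}\Sigma_{T,n}(g)=x$, so by the first displayed formula $\exp_{V,n}\circ\Theta_{T,n}(g)=x$, as required. I do not expect a genuine obstacle here: all the analytic content is already contained in Proposition~\ref{prop:inversePR}. The only points requiring attention are the bookkeeping of the Tate twist --- checking that $m=-a$ is an admissible index and that $g'$ genuinely lands in the \emph{integral} module $(\AQp)^{\psi=0}\otimes\Dcris(T)$ (because $\partial^{-a}$ is a non-negative power of $\partial$, which preserves $(\AQp)^{\psi=0}$, and $v_i\cdot t^{-a}e_a$ is an $\calO_F$-basis of $\Dcris(\T(a))=\Dcris(T)$) --- together with the observation that the spurious constant $(-1)^{-a}(b-1)!$ does no harm, since we only need to hit every class in $H^1_f(F_n,T)$ and not to invert $\exp_{V,n}\circ\Theta_{T,n}$ exactly.
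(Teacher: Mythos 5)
Your proposal is correct and follows essentially the same route as the paper: lift $x$ to $\HIw$ via (H.tor), apply Proposition~\ref{prop:inversePR} with $m=-a$, and unwind Definition~\ref{defn:Sigman} to rewrite $\Sigma_{T,n}$ in terms of $\exp_{V,n}\circ\Theta_{T,n}$. Your write-up is in fact a bit more scrupulous than the paper's on two small points that the paper leaves implicit: the Tate-twist bookkeeping distinguishing the lift $w\in\HIw(F,T)$ from $z=w\cdot e_{-a}\in\HIw(F,\T)$ (the paper's statement of Proposition~\ref{prop:inversePR} has a small slip, writing $\pr_{\T(-m),n}(z)$ where the proof actually produces $\pr_{\T(-m),n}(z\cdot e_{-m})$), and the harmless integer constant $(-1)^{-a}(b-1)!$, which the paper silently absorbs into $c$ while you multiply through explicitly and observe integrality is preserved.
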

\begin{proof}
We take $m=-a\ge0$ in Proposition~\ref{prop:inversePR}. In particular, $\T(-m)=T$ by the definition of $\T$. Let $z\in \HIw(F,T)$ be any element such that
\[
\pr_{\T(-m),n}(z)=x.
\]
Proposition~\ref{prop:inversePR} says that there exists $c\in(\AQp)^{\psi=0}\otimes\Dcris(\T)$ such that
\[
x=p^{(1-b)n}\Sigma_{T,n}\circ\Tw_{-m}(c).
\]
But we have $\Tw_{-m}(c)\in(\AQp)^{\psi=0}\otimes\Dcris(T)$ and
\[
\Sigma_{T,n}=p^{(b-1)n}\exp_{V,n}\circ\Theta_{T,n}.
\]
Hence, we obtain our result on taking $g=\Tw_{-m}(c)$.
\end{proof}

\begin{corollary}\label{cor:bound}
Let $A_n$ be the image of $(\AQp)^{\psi=0}\otimes \Dcris(T)$ under $\Theta_{T,n}$ in $t_n(V)$. We have the inequality
\[
\left(H^1_f(F_n,T):\exp_{V,n}(L_n)\right)\leq (A_n:L_n)
\]
\end{corollary}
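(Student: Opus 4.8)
The plan is to transport everything into the tangent space $t_n(V)$ by means of the exponential map, and then to apply the cocycle relation for the generalised index of two full lattices. I would first recall that, under (H.eigen), the short exact sequence appearing in the definition of $\Tam_{F_n,\omega}(T)$ has the map $\Dcris(V)\to\Dcris(V)\oplus t_n(V)$ whose first component is $1-\vp$; this is injective because (H.eigen) guarantees that $1$ is not an eigenvalue of $\vp$, and since in addition $H^0(F_n,V)=0$ (again by (H.eigen)), the restriction of $\exp_{V,n}$ to $t_n(V)$ is an isomorphism of $\Qp$-vector spaces onto $H^1_f(F_n,V)$. Writing $B_n=\exp_{V,n}^{-1}\bigl(H^1_f(F_n,T)\bigr)$, a full $\Zp$-lattice in $t_n(V)$, and using that the generalised index is preserved by a linear isomorphism, we obtain
\[
\bigl(H^1_f(F_n,T):\exp_{V,n}(L_n)\bigr)=(B_n:L_n).
\]

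Next I would check the two facts needed about $A_n$. That $A_n$ is a full lattice in $t_n(V)$ — so that $(A_n:L_n)$ is a genuine power of $p$ — follows from $\Sigma_{T,n}=p^{(b-1)n}\exp_{V,n}\circ\Theta_{T,n}$ together with Corollary~\ref{cor:integralexp} (equivalently Proposition~\ref{prop:IntegralPR}): these force $\exp_{V,n}(A_n)\subseteq p^{-N}H^1_f(F_n,T)$ for a suitable integer $N$ depending on $n$, hence $B_n\subseteq A_n\subseteq p^{-N}B_n$, so that $A_n$ is squeezed between two full lattices. That $B_n\subseteq A_n$ is immediate from Lemma~\ref{lem:inclusion-expo}: every $x\in H^1_f(F_n,T)$ can be written $\exp_{V,n}(\Theta_{T,n}(g))$ for some $g\in(\AQp)^{\psi=0}\otimes\Dcris(T)$, so $H^1_f(F_n,T)\subseteq\exp_{V,n}(A_n)$ and applying $\exp_{V,n}^{-1}$ gives $B_n\subseteq A_n$.

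Finally I would invoke the cocycle identity $(B_n:L_n)=(B_n:A_n)\,(A_n:L_n)$ for full lattices in $t_n(V)$. Because $B_n\subseteq A_n$ the factor $(B_n:A_n)=[A_n:B_n]^{-1}$ is at most $1$, while $(A_n:L_n)$ is a positive power of $p$; therefore $(B_n:L_n)\le(A_n:L_n)$, which combined with the displayed equality is exactly the claimed inequality. I expect the only point requiring any care to be the lattice bookkeeping — verifying that $\exp_{V,n}(A_n)$ is genuinely bounded (hence finitely generated) so that $(A_n:L_n)$ is well defined — for which the integrality statement of Corollary~\ref{cor:integralexp} is precisely what is needed; the index manipulation itself is purely formal once that is in place.
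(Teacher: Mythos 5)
Your proof is correct and follows essentially the same route as the paper: the two pillars are Lemma~\ref{lem:inclusion-expo} (which gives $H^1_f(F_n,T)\subseteq\exp_{V,n}(A_n)$, equivalently $B_n\subseteq A_n$ after applying $\exp_{V,n}^{-1}$) together with the fact that $\exp_{V,n}$ restricted to $t_n(V)$ is injective, so that generalised indices can be transported. The paper phrases this by applying $\exp_{V,n}$ to $L_n$ and comparing indices inside $H^1_f$, whereas you pull everything back to $t_n(V)$ and apply the cocycle identity; these are the same computation in two coordinate systems. Your extra paragraph checking that $A_n$ is bounded (hence a genuine full lattice, so $(A_n:L_n)$ is a power of $p$) via Corollary~\ref{cor:integralexp} is a worthwhile piece of bookkeeping that the paper's proof leaves implicit, but it does not constitute a different argument.
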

\begin{proof}
By Lemma~\ref{lem:inclusion-expo}, we have the inclusion
\[
H^{1}_f(F_n,T)\subset \exp_{V,n}\left(A_n\right).
\]
Therefore,
\[
\left(H^1_f(F_n,T):\exp_{V,n}(L_n)\right)\leq\left(\exp_{V,n}(A_n):\exp_{V,n}(L_n)\right).
\]
The result now follows from the fact that $\exp_{V,n}$ is injective on $t_n(V)$.
\end{proof}

Via \eqref{eq:decomposition}, there are decompositions 
\[
\begin{split}
t_n(V)&=\bigoplus_{i=2}^n\left(\calO_{F_i}^{\Tr=0}\otimes t(V)\right)\oplus \left(\calO_{F_1}\otimes t(V)\right),\\
L_n&=\bigoplus_{i=2}^n\left(\calO_{F_i}^{\Tr=0}\otimes \Dcris(T)/\Fil^0\Dcris(T)\right)\oplus \left(\calO_{F_1}\otimes \Dcris(T)/\Fil^0\Dcris(T)\right).
\end{split}
\]
For $i\ge2$ (respectively $i=1$), we write $\pr_i$ for the projection of $t_n(V)$ to
$\calO_{F_i}^{\Tr=0}\otimes t(V)$ (respectively $\calO_{F_1}\otimes t(V)$).

\begin{lem}\label{lem:project}
Let $g\in(\AQp)^{\psi=0}\otimes\Dcris(T)$, then
\[
\pr_i\circ\Theta_{T,n}(g)=\begin{cases}
\left((\sigma^{-i}\otimes \vp^{n-i})g\right)(\zeta_{p^i}-1)\mod \Fil^0\Dcris(T)&\text{if }i\ge2,\\
\left((\sigma^{-1}\otimes \vp^{n-1})g\right)(\zeta_{p}-1)+(1-\vp)^{-1}\vp^n(g(0))\mod\Fil^0\Dcris(T)&\text{if }i=1.\end{cases}
\]
\end{lem}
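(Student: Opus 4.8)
The plan is to work directly from the recursion satisfied by the solution of \eqref{eq:Eg}, without invoking any further functoriality of the exponential map. Fix $g\in(\AQp)^{\psi=0}\otimes\Dcris(T)$ and let $G\in(\Brig\otimes\Dcris(T))^{\psi=1}$ be the unique solution of \eqref{eq:Eg} provided by Proposition~\ref{prop:PR}(i); by Definition~\ref{defn:Sigman} we have $\Theta_{T,n}(g)=G^{\sigma^{-n}}(\zeta_{p^n}-1)\bmod\Fil^0\Dcris(T)$. Iterating the identity $G=g+\vp(G)$ exactly $n$ times gives the finite equality
\[
G=\sum_{k=0}^{n-1}\vp^k(g)+\vp^n(G)\qquad\text{in }\Brig\otimes\Dcris(T).
\]
Applying the $\sigma^{-n}$-twist and evaluating at $\pi=\zeta_{p^n}-1$ then expresses $\Theta_{T,n}(g)$ as a sum of $n$ terms, one from each $\vp^k(g)$, plus one tail term from $\vp^n(G)$; the proof amounts to computing these $n+1$ terms and locating each one in the direct-sum decomposition of $t_n(V)$ coming from \eqref{eq:decomposition}.

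For $0\le k\le n-1$, using that $\vp$ sends $\pi$ to $(1+\pi)^p-1$ and is $\sigma$-semilinear on coefficients, together with $\zeta_{p^n}^{p^k}=\zeta_{p^{n-k}}$, a termwise computation gives
\[
\bigl(\vp^k(g)\bigr)^{\sigma^{-n}}(\zeta_{p^n}-1)=\bigl((\sigma^{-(n-k)}\otimes\vp^k)g\bigr)(\zeta_{p^{n-k}}-1),
\]
which after reindexing by $i=n-k\in\{1,\dots,n\}$ is exactly $\bigl((\sigma^{-i}\otimes\vp^{n-i})g\bigr)(\zeta_{p^i}-1)$. For the tail term, $\zeta_{p^n}^{p^n}=1$, so evaluating $\vp^n(G)$ at $\pi=\zeta_{p^n}-1$ retains only the constant term $G(0):=G|_{\pi=0}$ and produces $\vp^n(G(0))$. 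By the explicit formula \eqref{eq:solution} for $G$, setting $t=0$ annihilates every $t^j/j!$-summand with $j\ge1$ as well as $\sum_{k\ge0}\vp^k(\tilde g)$ (since $\tilde g\in\pi^{r+1}\Brig\otimes\Dcris(T)$), leaving $G(0)=(1-\vp)^{-1}\Delta_0(g)=(1-\vp)^{-1}(g(0))$; since $\vp$ commutes with $(1-\vp)^{-1}$, the tail term equals $(1-\vp)^{-1}\vp^n(g(0))$.

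It remains to match these $n+1$ terms to the summands of $t_n(V)$. Write $g=\sum_{l=1}^d g_l\otimes v_l$ with $g_l\in(\AQp)^{\psi=0}$. For $2\le i\le n$, the computation in the proof of Lemma~\ref{lem:eval} shows $\Tr_{F_i/F_{i-1}}\bigl(g_l(\zeta_{p^i}-1)\bigr)=0$, and since $\sigma$ fixes $\zeta_{p^i}$ and commutes with $\Tr_{F_i/F_{i-1}}$, the term $\bigl((\sigma^{-i}\otimes\vp^{n-i})g\bigr)(\zeta_{p^i}-1)$ lies, modulo $\Fil^0$, in the summand $\calO_{F_i}^{\Tr=0}\otimes t(V)$. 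The $i=1$ term $\bigl((\sigma^{-1}\otimes\vp^{n-1})g\bigr)(\zeta_p-1)$ and the tail term $(1-\vp)^{-1}\vp^n(g(0))$ both lie in $\calO_{F_1}\otimes t(V)$: the former because $g_l(\zeta_p-1)\in\calO_{F_1}$, the latter because $g(0)\in\Dcris(T)$, so that the term lies in $F\otimes t(V)\subseteq F_1\otimes t(V)$. As these $n+1$ terms lie in distinct direct summands of $t_n(V)$ and sum to $\Theta_{T,n}(g)$, uniqueness of the direct-sum decomposition identifies $\pr_i\circ\Theta_{T,n}(g)$ with the asserted expression in each case.

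The one genuinely delicate step is the $\sigma$-semilinearity bookkeeping: one must track carefully how $\vp$ interacts with the $\sigma^{-n}$-twist and with evaluation at the various $\zeta_{p^j}-1$, so that the reindexing $i=n-k$ in the second paragraph and the identification of $G(0)$ in the third are correct. Each individual verification is a routine manipulation of $\vp$, $\sigma$ and evaluation, but the powers of $\sigma$ and the order in which the operations are applied must be matched with care.
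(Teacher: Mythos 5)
Your proof is correct and follows essentially the same route as the paper's: both expand the solution $G$ of \eqref{eq:Eg} and evaluate at $\zeta_{p^n}-1$ after the $\sigma^{-n}$-twist, arriving at the same $n$ interpolation terms plus the tail $(1-\vp)^{-1}\vp^n(g(0))$, and both then invoke Lemma~\ref{lem:eval} to identify the direct summands. The only difference is bookkeeping — you iterate $G=g+\vp(G)$ to isolate the tail $\vp^n(G)$ at the outset and appeal to \eqref{eq:solution} only to compute $G(0)$, whereas the paper works with $\tilde{g}$ throughout and collapses a telescoping sum — but the terms produced and the lemmas used are identical.
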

\begin{proof}
Recall from \eqref{eq:solution} that if $(1-\vp)G=g$, then
\[
G=\sum_{k=0}^{\infty}\vp^k(\tilde{g})+\sum_{j=0}^r\frac{t^j}{j!}\otimes (1-p^j\vp)^{-1}\Delta_j(g),
\]
where $\tilde{g}=g-\sum_{j=0}^r\frac{t^j}{j!}\otimes\Delta_j(g)$. When evaluated at $\zeta_{p^n}-1$, all the terms involving $t^j$ with $j\ge1$ vanish by the definition of $t$. Furthermore, by the definition of $\tilde{g}$, it is divisible by  $\pi$ over $\Brig$. This implies that $\vp^k(\tilde{g})$ vanishes at $\zeta_{p^n}-1$ for all $k\ge n$. Therefore, we have
\begin{align*}
G^{\sigma^{-n}}(\zeta_{p^n}-1)&=\sum_{k=0}^{n-1}\vp^k(\tilde{g})^{\sigma^{-n}}(\zeta_{p^n}-1)+(1-\vp)^{-1}g(0)\\
&=\sum_{k=0}^{n-1}\left((\sigma^{k-n}\otimes \vp^k)g(\zeta_{p^{n-k}}-1)-\vp^k(g(0))\right)+ (1-\vp)^{-1}g(0)\\
&=\sum_{i=1}^n \left((\sigma^{-i}\otimes \vp^{n-i})g\right)(\zeta_{p^{i}}-1)+(1-\vp)^{-1}\vp^n(g(0)).
\end{align*}
Our result now follows from Lemma~\ref{lem:eval}.
\end{proof}

\begin{prop}\label{prop:bound}
We have the inequality
\[
(A_n:L_n)\le p^{(r(p^{n-1}+p-2)+s_2(p-1))d'+s_1d_n'}.
\]
\end{prop}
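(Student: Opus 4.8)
The plan is to bound the index $(A_n:L_n)$ by controlling, factor by factor in the decomposition $L_n=\bigoplus_{i=2}^n(\calO_{F_i}^{\Tr=0}\otimes\Dcris(T)/\Fil^0)\oplus(\calO_{F_1}\otimes\Dcris(T)/\Fil^0)$, how far $\pr_i(A_n)$ can fail to contain $\pr_i(L_n)$. Since $A_n$ is the image of $(\AQp)^{\psi=0}\otimes\Dcris(T)$ under $\Theta_{T,n}$ and $\Theta_{T,n}(g)=G^{\sigma^{-n}}(\zeta_{p^n}-1)\bmod\Fil^0$ where $(1-\vp)G=g$, I would use the explicit formula for $\pr_i\circ\Theta_{T,n}$ from Lemma~\ref{lem:project}. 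For $i\ge2$, $\pr_i\circ\Theta_{T,n}(g)=\big((\sigma^{-i}\otimes\vp^{n-i})g\big)(\zeta_{p^i}-1)\bmod\Fil^0$, and via the isomorphism $\iota_i:(\AQp)^{\psi=0}/\vp^{i-1}(q)(\AQp)^{\psi=0}\cong\calO_{F_i}^{\Tr=0}$ of Lemma~\ref{lem:eval}, running $g$ over all of $(\AQp)^{\psi=0}\otimes\Dcris(T)$ we see that $\pr_i(A_n)$ contains $\calO_{F_i}^{\Tr=0}\otimes\vp^{n-i}(\Dcris(T))$ modulo $\Fil^0$. By \eqref{eq:neweigen}, $\vp^{n-i}(\Dcris(T))\supset p^{r(n-i)}(p^r\vp)^{\,?}\dots$; more precisely, writing $\vp^{n-i}=p^{-r(n-i)}(p^r\vp)^{n-i}$ and using \eqref{eq:neweigen} one gets $p^{s_1}\Dcris(T)\subset p^{r(n-i)}\vp^{n-i}(\Dcris(T))$, hence $p^{r(n-i)+s_1}L_{n}$-component at level $i$ lies in $\pr_i(A_n)$. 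This contributes an index of at most $p^{(r(n-i)+s_1)\cdot\rank_{\Zp}(\calO_{F_i}^{\Tr=0}\otimes\Dcris(T)/\Fil^0)}=p^{(r(n-i)+s_1)d'p^{i-2}(p-1)^2}$ at each level $i\ge2$, using \eqref{eq:tracekernelrank}.

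Next I would handle the level-$1$ term. Here Lemma~\ref{lem:project} gives $\pr_1\circ\Theta_{T,n}(g)=\big((\sigma^{-1}\otimes\vp^{n-1})g\big)(\zeta_p-1)+(1-\vp)^{-1}\vp^n(g(0))\bmod\Fil^0$. Choosing $g$ supported away from the constant term (i.e. with $g(0)=0$) and using $\iota_1:(\AQp)^{\psi=0}/\vp(\pi)(\AQp)^{\psi=0}\cong\calO_{F_1}$, the same slope estimate shows $\pr_1(A_n)\supset\calO_{F_1}\otimes\vp^{n-1}(\Dcris(T))\bmod\Fil^0$, which contains $p^{r(n-1)+s_1}$ times the level-$1$ component of $L_n$; separately, by also exploiting the $(1-\vp)^{-1}\vp^n(g(0))$ term and \eqref{eq:newnewlattice}, one can do better — the $(1-\vp)^{-1}$ lets one gain back most of the $\vp^{n-1}$ loss at the cost of only $p^{s_2}$, so the level-$1$ contribution to the index is bounded by roughly $p^{(r+s_2)d'\cdot[F_1:F]}=p^{(r+s_2)d'(p-1)}$. (The bookkeeping of how the two summands in the $i=1$ formula interact is where care is needed; the cleanest route is to note $\pr_1(A_n)$ contains both $\calO_{F_1}\otimes(1-\vp)^{-1}\vp^n\Dcris(T)$ and the trace-zero-shifted pieces, and combine.)

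Summing the exponents over $i$, the dominant term is the geometric-type sum $\sum_{i=2}^n r(n-i)d'p^{i-2}(p-1)^2$, which is $O(p^{n-1})$; carrying out this sum carefully (it telescopes against the $(p-1)^2$ factor) should produce exactly the $r(p^{n-1}+p-2)$ coefficient, while the $s_1$ terms sum to $s_1d'\sum_{i}p^{i-2}(p-1)^2+s_1d'(p-1)=s_1 d'_n$ up to the level-$1$ adjustment, and the level-$1$ piece supplies the $s_2(p-1)d'$ and the remaining $r(p-2)d'$ correction. I expect the main obstacle to be precisely this arithmetic of matching the summed exponents to the stated closed form $r(p^{n-1}+p-2)+s_2(p-1)$ together with the clean $s_1d'_n$, and in particular getting the level-$1$ term to contribute $(r+s_2)(p-1)d'$ rather than something larger — this requires using \eqref{eq:newnewlattice} (the $(1-\vp)^{-1}$ bound) rather than a naive $\vp^{n-1}$ estimate at $i=1$. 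I would finish by invoking \eqref{eq:tracekernelrank} and \eqref{eq:decomposition} to assemble the total and confirm the exponent is $(r(p^{n-1}+p-2)+s_2(p-1))d'+s_1d_n'$ as claimed.
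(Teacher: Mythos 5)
Your overall strategy — decomposing $L_n$ level by level via \eqref{eq:decomposition}, applying Lemma~\ref{lem:project} to express $\pr_i\circ\Theta_{T,n}$, controlling the lattices with \eqref{eq:neweigen} and \eqref{eq:newnewlattice}, and summing the ranks from \eqref{eq:tracekernelrank} — matches the paper's. But the key inclusions run in the \emph{wrong direction}, and this is not a cosmetic slip: it invalidates the deduction.

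To bound $(A_n:L_n)$ from \emph{above} you must show that $\pr_i(A_n)$ is not too \emph{large}, i.e.\ $\pr_i(A_n)\subset p^{-c_i}\pr_i(L_n)$, which then gives $[\pr_i(A_n):\pr_i(L_n)]\le p^{c_i\cdot\rank}$. You instead claim $p^{s_1}\Dcris(T)\subset p^{r(n-i)}\vp^{n-i}(\Dcris(T))$ and conclude $p^{r(n-i)+s_1}\pr_i(L_n)\subset\pr_i(A_n)$: a containment of a shrunken $L_n$ \emph{inside} $A_n$ only yields a \emph{lower} bound on $(A_n:L_n)$, never an upper bound. Worse, the inclusion you assert is the reverse of what \eqref{eq:neweigen} actually says: writing $p^{r(n-i)}\vp^{n-i}(\Dcris(T))=(p^r\vp)^{n-i}(\Dcris(T))$, \eqref{eq:neweigen} gives $(p^r\vp)^{n-i}(\Dcris(T))\subset p^{-s_1}\Dcris(T)$, hence $\vp^{n-i}(\Dcris(T))\subset p^{-s_1-r(n-i)}\Dcris(T)$, hence $\pr_i(A_n)\subset p^{-s_1-r(n-i)}\pr_i(L_n)$ — this is the containment you need, and it goes the other way.

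The level-$1$ analysis has the same directional problem (``$\pr_1(A_n)$ contains $\ldots$'' again argues in the lower-bound direction), and moreover the stated contribution $(r+s_2)(p-1)d'$ does not match what is needed to close the sum: it drops the $s_1$ at level $1$ and has $r$ where it should have $rn$. The correct level-$1$ bound, obtained by applying \eqref{eq:neweigen} to the term $\vp^{n-1}(\Dcris(T))$ and both \eqref{eq:neweigen} and \eqref{eq:newnewlattice} to $(1-\vp)^{-1}\vp^n(\Dcris(T))$, is $\pr_1(A_n)\subset p^{-s_1-s_2-rn}\pr_1(L_n)$, contributing $p^{(s_1+s_2+rn)(p-1)d'}$. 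Once all inclusions are turned around, the product $\prod_i[\pr_i(A_n):\pr_i(L_n)]$ does evaluate to $p^{(r(p^{n-1}+p-2)+s_2(p-1))d'+s_1d_n'}$ (your final exponent is correct), so I suspect the flips are bookkeeping rather than a deep misconception, but as written the argument establishes only a lower bound on the index, not the claimed upper bound.
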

\begin{proof}
Let $g\in(\AQp)^{\psi=0}\otimes\Dcris(T)$. For $i\ge2$, Lemma~\ref{lem:project} tells us that  $\pr_i\circ \Theta_{T,n}(g)$ lies inside \[
\calO_{F_i}^{\Tr=0}\otimes \vp^{n-i}(\Dcris(T))\mod\Fil^0\Dcris(T).
\]
Note that $\vp^{n-i}(\Dcris(T))\subset p^{-s_1+r(i-n)}\Dcris(T)$ by \eqref{eq:neweigen}, so
\[
\pr_i(A_n)\subset p^{-s_1+r(i-n)}\pr_i(L_n).
\]
Together with \eqref{eq:tracekernelrank} we deduce that
\[
[\pr_i(A_n):\pr_i(L_n)]\le \left(p^{s_1+r(n-i)}\right)^{p^{i-2}(p-1)^2d'}.
\]

For $i=1$, \eqref{eq:neweigen} and \eqref{eq:newnewlattice} tell us that
\[
\pr_1(A_n)\subset\calO_{F_1}\otimes p^{-s_1-s_2-rn}\pr_1(L_n).
\] 
Thus,
\[
[\pr_1(A_n):\pr_1(L_n)]\le \left(p^{s_1+s_2+rn}\right)^{(p-1)d'}.
\]

Since $L_n=\bigoplus_{i=1}^n\pr_i(L_n)$ and $A_n\subset\bigoplus_{i=1}^n\pr_i(A_n)$, we deduce that
\begin{align*}
(A_n:L_n)&\le\prod_{i=1}^n(\pr_i(A_n):\pr_i(L_n))\\
&\le \left(p^{\sum_{i=2}^n(s_1+r(n-i))p^{i-2}(p-1)^2+(s_1+s_2+rn)(p-1)}\right)^{d'}\\
&=p^{(r(p^{n-1}+p-2)+s_2(p-1))d'+s_1d_n'}
\end{align*}
as required.
\end{proof}

\begin{corollary}\label{cor:finalbound}
The Tamagawa number $\Tam_{F_n,\omega_n}(T)$ is bounded above by
\[
\frac{p^{(r(p^{n-1}+p-2)+s_2(p-1))d'+s_1d_n'}}{\det_{\Zp}(1-\vp|\Dcris(V))_{(p)}}.
\]
\end{corollary}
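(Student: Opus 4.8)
The plan is to deduce the bound formally from three ingredients that are already in place: the defining formula \eqref{eq:Tam} for the local Tamagawa number, the comparison of indices in Corollary~\ref{cor:bound}, and the explicit estimate of Proposition~\ref{prop:bound}.

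First I would apply \eqref{eq:Tam} with the specific lattice $L=L_n=\calO_{F_n}\otimes\Dcris(T)/\Fil^0\Dcris(T)$ and the basis $\omega=\omega_n$ of $\det_{\Zp}L_n$ fixed above. Since $\omega_n$ generates $\det_{\Zp}L_n$ by construction, \eqref{eq:Tam} reads
\[
\Tam_{F_n,\omega_n}(T)=\frac{\left(H^1_f(F_n,T):\exp_{V,n}(L_n)\right)}{\det_{\Zp}(1-\vp|\Dcris(V))_{(p)}},
\]
so it suffices to bound the numerator on the right. Second, Corollary~\ref{cor:bound} gives $\left(H^1_f(F_n,T):\exp_{V,n}(L_n)\right)\le(A_n:L_n)$, where $A_n$ is the image of $(\AQp)^{\psi=0}\otimes\Dcris(T)$ under $\Theta_{T,n}$; and third, Proposition~\ref{prop:bound} bounds this last index by $p^{(r(p^{n-1}+p-2)+s_2(p-1))d'+s_1d_n'}$. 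Chaining these inequalities and dividing by $\det_{\Zp}(1-\vp|\Dcris(V))_{(p)}$, which is independent of the chosen lattice and passes through unchanged, yields precisely the asserted upper bound.

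There is no genuine obstacle at this stage: the corollary is a bookkeeping assembly of Corollary~\ref{cor:bound} and Proposition~\ref{prop:bound}, whose proofs rest on the substantive work of \S\ref{S:Eg} (solving \eqref{eq:Eg} and controlling the denominators of $G$), on the Wach-module computations of \S\ref{S:wach} producing an inverse of $\Sigma_{T,n}$, and on Lemma~\ref{lem:project}. The only care required is to keep the exponents straight, i.e. that the two integrality constants $s_1,s_2$ of \eqref{eq:neweigen}--\eqref{eq:newnewlattice} enter through the $\vp^{n-i}$-expansion on the trace-kernel components and through $(1-\vp)^{-1}$ on the $\calO_{F_1}$-component, respectively, exactly as tabulated in the proof of Proposition~\ref{prop:bound}.
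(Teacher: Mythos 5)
Your proof is correct and follows exactly the same route as the paper: substitute $L=L_n$, $\omega=\omega_n$ into \eqref{eq:Tam}, bound the numerator via Corollary~\ref{cor:bound}, and then invoke Proposition~\ref{prop:bound}. The paper's proof is the same one-line combination of these three ingredients.
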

\begin{proof}
This follows from combining \eqref{eq:Tam} with Corollary~\ref{cor:bound} and Proposition~\ref{prop:bound}. 
\end{proof}

\begin{remark}
Note that $p^{n-1}+p-2\le [F_n:F]$ for all $n\ge1$. In particular, under the notation of Lemma~\ref{lem:scalarmultiplication},  we deduce from Corollary~\ref{cor:finalbound}  the inequality
\[
\Tam_{F_n,\omega_{n,-r-s_1}}(T)\le \frac{p^{s_2(p-1)d'}}{\det(1-\vp|\Dcris(V))_{(p)}},
\]
which is independent of $n$.
\end{remark}

\section{Comparison with Laurent's results}\label{S:compare}

We recall from \cite{laurent}  that  $\Tam_{F_n,\omega_n}(T)$ is bounded above by
\begin{equation}
p^{[F_n:\Qp]n\alpha}\times j_n(\omega_n\otimes v_n^{-1}),
\label{eq:laurent}
\end{equation}
where $\alpha=-\sum_i^dr_i+(b-1)d+d'$, with  $r_1,\ldots,r_d$ being the Hodge-Tate weights of $T$ and $j_n(\omega_n\otimes v_n^{-1})$ is a power of $p$ that measures the difference between $\omega_n$ and $\det_{\Zp}A_n$ (see p.97 of \textit{op. cit.}). In general, it is not clear how to calculate $j_n(\omega_n\otimes v_n^{-1})$ explicitly. However, it seems to us that the bounds given in Corollary~\ref{cor:finalbound} would generally grow slower than \eqref{eq:laurent} because the former are $O(p^n)$, instead of $O(np^n)$.  In the following section, we illustrate this with representations arising from modular forms.

\begin{remark}
There are in fact some extra factors in the upper bound of Laurent. However, under (H.eigen) and (H.FL), they all turn out to be $1$.
\end{remark}

\subsection{Representations attached to modular forms}

Let $f=\sum_{n\ge 1}a_nq^n$ be a modular form of even weight $k$  and nebentypus $\varepsilon$ whose level is coprime to $p$. Let $E$ be the smallest extension of $\Qp$ that contains $a_n$ for all $n$. Let $V_f$ be the Deligne representation restricted to $G_{\Qp}$, which is of dimension $2[E:\Qp]$.  Furthermore, we fix a Galois-stable lattice $T_f$ inside $V_f$. It has Hodge-Tate weights $1-k$ and $0$ (with multiplicity $[E:\Qp]$). In particular, if $p>k$, then (H.FL) holds as well. The action of $\vp$ on $\Dcris(V_f)$ satisfies $\vp^2-a_p\vp+\varepsilon(p)p^{k-1}$. We see that the evenness of $k$ ensures that (H.eigen) holds as well. Finally, if (H.FL) holds, then $p+1\nmid k-1$. This implies that (H.tor) holds by \cite[Lemma~4.4]{lei10}.

Take $T=T_f(k/2)$ and assume that $v_p(a_p)\ge k/2$ with $\varepsilon(p)=1$. Under this assumption, the slope of $\vp$ on $\Dcris(T)$ is constant and equal to $-1/2$.

For this particular representation, Laurent has worked out the exact value of $j_n(\omega_n\otimes v_n^{-1})$ in \cite[\S3.4.3]{laurent} and showed that
\begin{equation}
\Tam_{F_n,\omega_n}(T_f(k/2))\le p^{(n+1/2)e[F_n:\Qp](k-2)},
\label{eq:laurentsbound}
\end{equation}
where $e=[E:\Qp]$.
Furthermore, when $k=2$, equality holds. That is, $\Tam_{F_n,\omega_n}(T_f(k/2))=1$ for all $n\ge1$.

For the same representation, Corollary~\ref{cor:finalbound} says that
\[
\Tam_{F_n,\omega_n}(T_f(k/2))\le \frac{p^{\left((p^{n-1}+p-2)+s_2(p-1)+s_1[F_n:\Qp]\right)e}}{\det_{\Zp}(1-\vp|\Dcris(V))_{(p)}}
\]
as $d'=e$ and we may take $r=1$. In the case $k>2$, we see that this bound is smaller than \eqref{eq:laurentsbound} when $n$ is sufficiently large, no matter what $s_1$ and $s_2$ are.

For the rest of this section, we shall analyse the action of $\vp$ in order to find out what value $s$ we can take. Recall that the action of $\vp$ on $\Dcris(T)$ satisfies the equation
\begin{equation}\label{eq:phi}
\vp^2-\frac{a_p}{p^{\frac{k}{2}}}\vp+\frac{1}{p}=0.
\end{equation}
Therefore, 
\[
\det(1-\vp|\Dcris(V))_{(p)}=\left(1-\frac{a_p}{p^{\frac{k}{2}}}+\frac{1}{p}\right)_{(p)}=\frac{1}{p^e}.
\]

\begin{lem}
For all $n\ge0$, 
\[
(p\vp)^n\Dcris(T)\subset p^{-k/2+1}\Dcris(T).
\]
\end{lem}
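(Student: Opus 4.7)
The plan is to use the quadratic relation \eqref{eq:phi} to write every power $(p\vp)^n$ as an $\calO_F$-linear combination of $1$ and $p\vp$, and then to bound $p\vp$ on $\Dcris(T)$ via (H.FL).

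First I would multiply \eqref{eq:phi} through by $p^2$ and collect terms to rewrite it as
\[
(p\vp)^2 = \beta\,(p\vp) - p, \qquad \beta := \frac{a_p}{p^{k/2-1}}.
\]
The slope hypothesis $v_p(a_p)\ge k/2$ gives $v_p(\beta)\ge 1$, so $\beta\in p\calO_F$; in particular this is a genuine relation inside $\calO_F[p\vp]$.

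An immediate induction then yields $c_n,d_n\in\calO_F$ with $(p\vp)^n = c_n(p\vp)+d_n$; the recursion $c_{n+1}=\beta c_n+d_n$, $d_{n+1}=-pc_n$, starting from $(c_0,d_0)=(0,1)$, visibly preserves $\calO_F$-integrality because $\beta,p\in\calO_F$.

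The key geometric input is (H.FL) applied to $T=T_f(k/2)$, whose largest Hodge--Tate weight is $k/2$: this gives $p^{k/2}\vp(\Dcris(T))\subset\Dcris(T)$, equivalently $p\vp(\Dcris(T))\subset p^{1-k/2}\Dcris(T)$. Combining this with the trivial inclusion $\Dcris(T)\subset p^{1-k/2}\Dcris(T)$ (valid since $k\ge 2$), I would conclude
\[
(p\vp)^n\Dcris(T) \subset c_n\cdot p\vp(\Dcris(T)) + d_n\cdot\Dcris(T) \subset p^{1-k/2}\Dcris(T),
\]
as required. The only step requiring a little care is correctly identifying the top Hodge--Tate weight of $T_f(k/2)$ as $b=k/2$ in the paper's sign convention; once that is in hand the proof reduces to the two-line linear-algebra manipulation above.
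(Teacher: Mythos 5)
Your proof is correct, and it takes a genuinely different route from the paper's. The paper diagonalizes the matrix of $\vp$ in the basis $v,\,p^{k/2}\vp(v)$ of $\Dcris(T)$, writing $(p\vp)^n = P\,\mathrm{diag}((p\alpha)^n,(p\beta)^n)\,P^{-1}$ with $\alpha,\beta$ the roots of $X^2-\tfrac{a_p}{p^{k/2}}X+\tfrac1p$ (each of slope $-1/2$), and then reads off the valuations of the entries, obtaining the slightly sharper growing bound $n/2-k/2+1/2$. Your argument instead multiplies the relation $\vp^2-\tfrac{a_p}{p^{k/2}}\vp+\tfrac1p=0$ by $p^2$ to get the \emph{integral} recursion $(p\vp)^2=\tfrac{a_p}{p^{k/2-1}}(p\vp)-p$ in $\calO_E[p\vp]$ (using $v_p(a_p)\ge k/2$ to see that $a_p/p^{k/2-1}$ is integral), so that $(p\vp)^n$ is an $\calO_E$-linear combination of $1$ and $p\vp$; one then only needs the single Fontaine--Laffaille bound $p\vp\Dcris(T)\subset p^{1-k/2}\Dcris(T)$, together with the trivial inclusion $\Dcris(T)\subset p^{1-k/2}\Dcris(T)$. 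This is cleaner in that it stays inside $\calO_E$ (no need to adjoin the eigenvalues, no explicit change-of-basis matrix), and it would generalize verbatim to higher rank via Cayley--Hamilton. The trade-off is that you only get the uniform bound $p^{1-k/2}$ and not the improving estimate with $n$, but the uniform bound is all the lemma asserts. One small bookkeeping remark: the integrality here is over $\calO_E$ rather than $\calO_F=\Zp$, since $a_p\in\calO_E$ and $\Dcris(T)$ is an $\calO_E$-lattice of rank $2$; you should also rename your auxiliary constant to avoid clashing with the paper's use of $\beta$ for an eigenvalue.
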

\begin{proof}
By (H.FL), the fact that the highest Hodge-Tate weight of $\Dcris(T)$ is $\frac{k}{2}$ implies that we may pick $v\in\Fil^{k/2}\Dcris(T)\setminus\Fil^{k/2+1}\Dcris(T)$ such that $v,p^{k/2}\vp(v)$ form a basis of $\Dcris(T)$. The matrix of $\vp$ with respect to this basis is given by
\[
\begin{pmatrix}
0 &  -p^{k/2-1}\\
p^{-k/2}&a_p/p^{k/2}
\end{pmatrix}=\begin{pmatrix}
-p^{k/2}\beta&-p^{k/2}\alpha\\
1&1
\end{pmatrix}
\begin{pmatrix}
\alpha&0\\
0&\beta
\end{pmatrix}
\begin{pmatrix}
\frac{p^{k/2}}{\alpha-\beta}&\frac{\alpha}{\alpha-\beta}\\
\frac{-p^{k/2}}{\alpha-\beta}&\frac{-\beta}{\alpha-\beta}
\end{pmatrix},
\]
where $\alpha$ and $\beta$ are the roots to $X^2-\frac{a_p}{p^{\frac{k}{2}}}+\frac{1}{p}$, both of which have $p$-adic valuation $-1/2$. Therefore, the matrix of $(p\vp)^n$ is given by
\[
\begin{pmatrix}
-p^{k/2}\beta&-p^{k/2}\alpha\\
1&1
\end{pmatrix}
\begin{pmatrix}
(p\alpha)^n&0\\
0&(p\beta)^n
\end{pmatrix}
\begin{pmatrix}
\frac{1}{p^{k/2}(\alpha-\beta)}&\frac{\alpha}{\alpha-\beta}\\
\frac{-1}{p^{k/2}(\alpha-\beta)}&\frac{-\beta}{\alpha-\beta}
\end{pmatrix}.
\]
But $(\alpha-\beta)^2=\left(\frac{a_p}{p^{k/2}}\right)^2-\frac{4}{p}$, which means that $v_p(\alpha-\beta)=-1/2$. This tells us that all the entries of the product of the three matrices above have $p$-adic valuations at least $n/2-k/2+1/2\ge -k/2+1$ for $n\ge1$. When $n=0$, the product is the identity. Hence the result.
\end{proof}

\begin{lem}
We have 
\[
(1-\vp)^{-1}\Dcris(T)\subset p^{-\frac{k}{2}+1}\Dcris(T).
\]
\end{lem}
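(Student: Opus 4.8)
The plan is to mimic the proof of the preceding lemma, working in the explicit $\calO_F$-basis $v,\,p^{k/2}\vp(v)$ of $\Dcris(T)$, where $v\in\Fil^{k/2}\Dcris(T)\setminus\Fil^{k/2+1}\Dcris(T)$ is the vector used there. With respect to this basis the matrix of $\vp$ is
\[
\begin{pmatrix} 0 & -p^{k/2-1}\\ p^{-k/2} & a_p/p^{k/2}\end{pmatrix},
\]
so the matrix of $1-\vp$ is
\[
B:=\begin{pmatrix} 1 & p^{k/2-1}\\ -p^{-k/2} & 1-a_p/p^{k/2}\end{pmatrix}.
\]

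First I would record $\det B = 1-a_p/p^{k/2}+1/p$. Since $v_p(a_p)\ge k/2$ by assumption, the term $1-a_p/p^{k/2}$ has non-negative valuation whereas $1/p$ has valuation $-1$; hence $v_p(\det B)=-1$. In particular $\det B\ne 0$, which reconfirms that $1-\vp$ is invertible on $\Dcris(V)$ (as already forced by (H.eigen), since $1=p^{0}$ is an integral power of $p$ and therefore not a Frobenius eigenvalue).

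Next I would write out
\[
B^{-1}=\frac{1}{\det B}\begin{pmatrix} 1-a_p/p^{k/2} & -p^{k/2-1}\\ p^{-k/2} & 1\end{pmatrix}
\]
and read off the $p$-adic valuations of its four entries: these are $\ge 1$, equal to $k/2$, equal to $-k/2+1$, and equal to $1$ respectively. As $k\ge 2$, the smallest of these is $-k/2+1$, so every entry of $B^{-1}$ lies in $p^{-k/2+1}\calO_F$. Since $\Dcris(T)$ is the free $\calO_F$-module on the chosen basis, applying $B^{-1}$ sends $\Dcris(T)$ into $p^{-k/2+1}\Dcris(T)$, which is exactly the claim.

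There is essentially no obstacle here; the only point requiring a moment's care is that the $1/p$ contribution to $\det B$ is not cancelled by $1-a_p/p^{k/2}$, which is immediate from $v_p(a_p)\ge k/2$. One could equally well avoid matrices: since $\vp$ satisfies \eqref{eq:phi}, the inverse $(1-\vp)^{-1}$ equals a linear polynomial $\alpha+\beta\vp$ in $\vp$ whose coefficients satisfy $v_p(\alpha),v_p(\beta)\ge 1$ (a direct consequence of \eqref{eq:phi} and $v_p(a_p)\ge k/2$), and combining this with the containment $\vp\Dcris(T)\subset p^{-k/2}\Dcris(T)$ from the preceding lemma gives the same bound.
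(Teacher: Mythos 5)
Your proposal is correct. The paper proves the lemma differently, though your final paragraph sketches exactly the paper's route: from \eqref{eq:phi} one has the operator identity
\[
(1-\vp)^{-1}=\frac{\vp+1-\tfrac{a_p}{p^{k/2}}}{1-\tfrac{a_p}{p^{k/2}}+\tfrac{1}{p}},
\]
the denominator has valuation $-1$ (as $v_p(a_p)\ge k/2$), the numerator sends $\Dcris(T)$ into $p^{-k/2}\Dcris(T)$ since $\vp\Dcris(T)\subset p^{-k/2}\Dcris(T)$ by (H.FL), and combining gives $p^{-k/2+1}\Dcris(T)$. Your primary matrix argument is the same computation in coordinates: you invert the matrix $B$ of $1-\vp$ in the basis $v,\,p^{k/2}\vp(v)$ and bound the entries. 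The operator-identity route is cleaner and basis-free (no need to reuse the explicit basis from the previous lemma), while the matrix route makes the extremal entry $p^{-k/2}/\det B$, which pins down where the worst denominator comes from, more visible. Both establish the claim, and in fact the information content is identical: $\det B = 1-a_p/p^{k/2}+1/p$ is precisely the denominator in the operator identity, and the $p^{-k/2}$ cofactor is exactly the $\vp$ contribution. One small caveat applying to both the paper and your proof: the explicit $2\times 2$ matrix only makes literal sense over $\calO_E$ rather than $\Zp=\calO_F$ (since $\Dcris(T)$ has $\Zp$-rank $2[E:\Qp]$), but this imprecision is inherited from the preceding lemma and does not affect the valuation bound.
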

\begin{proof}
Following \cite[Lemma~5.6]{leiloefflerzerbes11}, we deduce from \eqref{eq:phi} the equation
\[
(1-\vp)^{-1}=\frac{\vp+1-\frac{a_p}{p^{\frac{k}{2}}}}{1-\frac{a_p}{p^{\frac{k}{2}}}+\frac{1}{p}},
\]
where the denominator has $p$-adic valuation $-1$. The largest Hodge-Tate weight of $T$ is $\frac{k}{2}$, so (H.FL) implies that $\vp\Dcris(T)\subset p^{-\frac{k}{2}}\Dcris(T)$. As $1+\frac{a_p}{p^{\frac{k}{2}}}$ has non-negative $p$-adic valuation, our result follows.
\end{proof}
In particular, we may take $s_1=s_2=\frac{k}{2}-1$ in Corollary~\ref{cor:finalbound} for this particular representation. Hence, we conclude that 
\[
\Tam_{F_n,\omega_n}(T_f(k/2))\le p^{\left(k/2(p-1)(p^{n-1}+1)-p^{n-1}(p-2)\right)e}.
\]

\section*{Acknowledgment}The author has benefited from invaluable and interesting discussions with Laurent Berger, Henri Darmon, Daniel Disegni, Shinichi Kobayashi and Arthur Laurent during the preparation of this paper. He would also like to thank the anonymous referee for having carefully read an earlier version of the manuscript, which led to a number of improvements in the present article.

\appendix

%+++++++++++++++++++++++++++++++++++++++++++++++

\section{Results on $p$-adic valuations}

\begin{lem}\label{lem:evaluation}
If $\alpha= 1+p\beta$, where $\beta\in\Zp$, $n\ge0$ is an integer, then $v_p(\alpha^{p^n}-1)=n+1+v_p(\beta)$. Furthermore,
\[
\frac{\alpha^{p^n}-1}{p^{n+1}\beta}\equiv \sum_{i=0}^{n-1}\frac{(-p\beta)^i}{i+1}\mod p^n\Zp.
\] 
\end{lem}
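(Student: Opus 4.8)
The plan is to prove both assertions of Lemma~\ref{lem:evaluation} by a direct binomial expansion of $\alpha^{p^n} = (1+p\beta)^{p^n}$, tracking $p$-adic valuations of the binomial coefficients along the way. First I would write
\[
\alpha^{p^n}-1 = \sum_{i=1}^{p^n}\binom{p^n}{i}(p\beta)^i,
\]
and recall the standard estimate $v_p\!\left(\binom{p^n}{i}\right) = n - v_p(i)$ for $1\le i\le p^n$. Hence the $i$-th term has valuation $n - v_p(i) + i + i\,v_p(\beta) \ge n + i + i\,v_p(\beta) - v_p(i)$. The key elementary observation is that for $i\ge 1$ one has $i - v_p(i) \ge 1$, with equality exactly when $i=1$ (indeed $i - v_p(i)$ is strictly increasing in the relevant sense: if $i = p^a m$ with $p\nmid m$, then $i - v_p(i) = p^a m - a \ge 1$, and this equals $1$ only for $a=0,m=1$). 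Therefore the $i=1$ term, which is $p^{n+1}\beta$, is the unique term of minimal valuation $n+1+v_p(\beta)$, and every other term has strictly larger valuation. This gives $v_p(\alpha^{p^n}-1) = n+1+v_p(\beta)$.

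For the congruence, I would divide through by $p^{n+1}\beta$ to get
\[
\frac{\alpha^{p^n}-1}{p^{n+1}\beta} = \sum_{i=1}^{p^n}\binom{p^n}{i}\frac{(p\beta)^{i-1}}{p^n} = \sum_{i=1}^{p^n} \frac{1}{i}\binom{p^n-1}{i-1}\frac{(p\beta)^{i-1}}{p^n}\cdot\frac{p^n}{1},
\]
so it is cleaner to use the identity $\binom{p^n}{i} = \frac{p^n}{i}\binom{p^n-1}{i-1}$, which yields
\[
\frac{\alpha^{p^n}-1}{p^{n+1}\beta} = \sum_{i=1}^{p^n}\frac{1}{i}\binom{p^n-1}{i-1}(p\beta)^{i-1} = \sum_{j=0}^{p^n-1}\frac{1}{j+1}\binom{p^n-1}{j}(p\beta)^{j}.
\]
Now I claim that modulo $p^n$ one may replace $\binom{p^n-1}{j}$ by $(-1)^j$ for $0\le j\le n-1$, and that the terms with $j\ge n$ vanish modulo $p^n$. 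The latter holds because for $j\ge n$ the term $\frac{(p\beta)^j}{j+1}$ has valuation $\ge j - v_p(j+1) \ge j - \log_p(j+1)$, which is $\ge n$ for $j\ge n$ (again using $j - v_p(j+1)\ge n$, which one checks since $v_p(j+1)\le \log_p(j+1) < j$ for the range in question); combined with $v_p\binom{p^n-1}{j}\ge 0$ this gives the claim. For the former, $\binom{p^n-1}{j} = \prod_{t=1}^{j}\frac{p^n-t}{t} \equiv \prod_{t=1}^{j}\frac{-t}{t} = (-1)^j \pmod{p^{n}}$ after clearing the (unit, for $t$ not divisible by too large a power of $p$) denominators — more precisely one argues $\binom{p^n-1}{j} \equiv (-1)^j \pmod{p^{n - v_p(j!)}}$, and since $v_p(j!) < j \le n-1 < n$ for $j\le n-1$, combined with the extra factor $(p\beta)^j$ of valuation $\ge j$ this suffices to conclude the $j$-th term is $\equiv \frac{(-p\beta)^j}{j+1}\pmod{p^n}$.

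Assembling these two reductions gives exactly
\[
\frac{\alpha^{p^n}-1}{p^{n+1}\beta} \equiv \sum_{j=0}^{n-1}\frac{(-p\beta)^{j}}{j+1} \pmod{p^n\Zp},
\]
as desired. The main obstacle I anticipate is bookkeeping: making the valuation estimates for the tail terms ($j\ge n$) and for the denominators $j+1$ precise enough that the congruence holds modulo the full $p^n$ rather than some smaller power. The inequality $j - v_p(j+1) \ge n$ for $j \ge n$ needs a short separate argument (e.g.\ if $j+1 = p^a m$ then $v_p(j+1) = a \le \log_p(j+1) \le \log_p(j) + 1$, and one checks $j - \log_p j - 1 \ge n$ fails only for small $j$, which must then be handled by hand for those finitely many $j$ with $n \le j < $ some explicit bound) — so I would isolate this as a small auxiliary computation rather than let it clutter the main line of the proof. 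Everything else is routine binomial manipulation.
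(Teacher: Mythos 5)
Your overall approach --- expanding $(1+p\beta)^{p^n}$ binomially, isolating the $i=1$ term to get the valuation, and then proving $\binom{p^n-1}{j}(p\beta)^j \equiv (-p\beta)^j \pmod{p^n}$ to handle the congruence --- is the same as the paper's, and those parts of your argument are sound. You also correctly flagged the genuine sticking point: the truncation to $j\le n-1$. But the estimate you propose there is wrong: $j - v_p(j+1)\ge n$ does \emph{not} hold for all $j\ge n$ (take $p=3$, $n=2$, $j=2$: then $j-v_p(j+1)=2-1=1<2$). In fact the truncation is not merely hard to justify --- the congruence as stated in the lemma is false. With $p=3$, $n=2$, $\beta=1$, $\alpha=4$ one has $\frac{\alpha^{p^n}-1}{p^{n+1}\beta}=\frac{262143}{27}=9709\equiv 7\pmod 9$, while the stated right-hand side is $1-\tfrac{3}{2}=-\tfrac12\equiv 4\pmod 9$. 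The discrepancy is exactly the dropped $j=2$ term $\frac{(-3)^2}{3}=3$.

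For what it's worth, the paper's own proof has the same issue: the chain of displays only establishes
\[
\frac{\alpha^{p^n}-1}{p^{n+1}\beta}\equiv \sum_{i=1}^{p^n}(-1)^{i-1}\frac{(p\beta)^{i-1}}{i}\pmod{p^n},
\]
i.e.\ the sum running over the full range $0\le j\le p^n-1$, and the truncation to $j\le n-1$ in the statement is an error in the source. The correct statement has the sum running to $p^n-1$, and your argument (minus the flawed tail estimate) proves exactly that. It is also what the downstream application needs: in the proof of Lemma~\ref{lem:modulo1} one compares $\frac{\alpha^{p^k}-1}{p^{k+1}\beta}$ and $\frac{\alpha^{p^{k-1}}-1}{p^k\beta}$ modulo $p^n$ for $k\ge n+1$; their full-sum expansions agree modulo $p^n$ because the extra terms have index $j\ge p^{k-1}$, whence valuation $\ge p^{k-1}-k\ge p^n-(n+1)\ge n$. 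So the right fix is to correct the upper limit of the sum in the lemma, not to try to prove that the tail vanishes.
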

\begin{proof}
We have the expansion
\[
\alpha^{p^n}-1=(1+p\beta)^{p^n}-1=\sum_{i=1}^{p^n}\binom{p^n}{i}(p\beta)^i.
\]
We can rewrite the summand as
\[
\binom{p^n}{i}(p\beta)^i=(p^{n+1}\beta)\times\frac{(p^n-1)\times (p^n-2)\times\cdots (p^n-(i-1))}{1\times 2\times \cdots (i-1)}\times\frac{(p\beta)^{i-1}}{i}.
\]As in \cite[proof of Lemma~5.4]{lei09}, we have
\[
(i-1)\times v_p(p\beta)>v_p(i),
\]
which implies that $p^{n+1}\beta$ divides $\alpha^{p^n}-1$ over $\Zp$ and that
\begin{align*}
\frac{\alpha^{p^n}-1}{p^{n+1}\beta}&=\sum_{i=1}^{p^n}\frac{(p^n-1)\times (p^n-2)\times\cdots (p^n-(i-1))}{1\times 2\times \cdots (i-1)}\times\frac{(p\beta)^{i-1}}{i}\\
&\equiv\sum_{i=1}^{p^n}(-1)^{i-1}\times\frac{(p\beta)^{i-1}}{i}\mod p^n.
\end{align*}
Hence we are done.
\end{proof}

\begin{lem}\label{lem:modulo1}
Let $i,j\in\ZZ$, $k\ge n+1$, then 
\[
\frac{\Phi_{p^k}(u^{-i}\gamma)}{p}\equiv \delta\mod \omega_{n,j}(\gamma).
\]
for some $p$-adic unit $\delta\in 1+p^n\Zp$.
\end{lem}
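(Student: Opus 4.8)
The plan is to work modulo $\omega_{n,j}(\gamma)$, which is equivalent to evaluating at $\gamma$ such that $u^{-j}\gamma$ is a $p^n$-th root of unity; concretely, we may substitute $u^{-j}\gamma\mapsto\zeta$ for a $p^n$-th root of unity $\zeta$ (ranging over the relevant $\Delta$-isotypic / Galois conjugates), and show the resulting element of $\Zp[\zeta]$ lies in $1+p^n\Zp[\zeta]$ and in fact in $1+p^n\Zp$. First I would rewrite $\Phi_{p^k}(u^{-i}\gamma)=\Phi_{p^k}(u^{i-j}\cdot u^{-i}\gamma\cdot u^{-j}\gamma/u^{-j}\gamma)$... more cleanly: since $u^{-i}\gamma=(u^{j-i})\cdot(u^{-j}\gamma)$, working modulo $\omega_{n,j}(\gamma)$ amounts to evaluating $\Phi_{p^k}$ at $u^{j-i}\xi$ where $\xi$ runs over $p^n$-th roots of unity. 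Because $k\ge n+1$, we have $\Phi_{p^k}(X)=\Phi_p(X^{p^{k-1}})=1+X^{p^{k-1}}+\cdots+X^{p^{k-1}(p-1)}$, and $(u^{j-i}\xi)^{p^{k-1}}=u^{(j-i)p^{k-1}}$ since $\xi^{p^{k-1}}=\xi^{p^{k-1-n}\cdot p^n}=1$ as $k-1\ge n$. Hence the evaluation is simply $\sum_{\ell=0}^{p-1}u^{(j-i)p^{k-1}\ell}=\Phi_p(u^{(j-i)p^{k-1}})$, a \emph{constant} independent of $\xi$, which already shows the right-hand side is a genuine element of $\Zp$, not just of $\Zp[\zeta]$.

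The remaining point is the $p$-adic size: I must show $\dfrac{1}{p}\Phi_p(u^{(j-i)p^{k-1}})\in 1+p^n\Zp$. Write $m=(j-i)p^{k-1}$ and $w=u^m\in 1+p\Zp$ (indeed $u\in 1+p\Zp$, so $w\in 1+p\Zp$; moreover since $k-1\ge n$, writing $u=1+pc$ we get $w=u^{p^{k-1}}\cdot u^{(j-i-1)p^{k-1}}$... more directly $v_p(w-1)\ge 1$, and in fact $v_p(u^{p^{k-1}}-1)=k\ge n+1$ by Lemma~\ref{lem:evaluation}, so $v_p(w-1)\ge\min(k, \text{something})$; one must be a little careful if $j-i$ is divisible by $p$). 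Then $\Phi_p(w)=\dfrac{w^p-1}{w-1}$, and the claim becomes $\dfrac{w^p-1}{p(w-1)}\in 1+p^n\Zp$. This is exactly the kind of estimate handled by Lemma~\ref{lem:evaluation}: applying it with $\alpha=w$ gives $v_p(w^p-1)=1+1+v_p(w-1)$ when $w=1+p\beta$, and the congruence $\dfrac{w^p-1}{p^2\beta}\equiv\sum_{i=0}^{0}\dfrac{(-p\beta)^i}{i+1}=1\mod p\Zp$; combined with $\dfrac{w-1}{p\beta}=1$ this gives $\dfrac{w^p-1}{p(w-1)}=\dfrac{w^p-1}{p^2\beta}\equiv 1\mod p\Zp$, i.e.\ in $1+p\Zp$. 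To upgrade $1+p\Zp$ to $1+p^n\Zp$ I will use that $v_p(w-1)$ is itself large: since $k\ge n+1$, $w=u^{(j-i)p^{k-1}}$ satisfies $v_p(w-1)\ge n$ (write $u=1+pc$ with $c\in\Zp^\times$ if we normalized that way, or just $c\in\Zp$, and apply Lemma~\ref{lem:evaluation} with exponent $p^{k-1}$, noting $k-1\ge n$), so $\beta=(w-1)/p$ has $v_p(\beta)\ge n-1$, whence $\dfrac{w^p-1}{p(w-1)}-1\in p\beta\Zp\subset p^n\Zp$ directly from the $\mod p^n\Zp$ refinement... here I should invoke the sharper congruence in Lemma~\ref{lem:evaluation} (the $\mod p^n\Zp$ statement) rather than just $\mod p\Zp$.

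So the structure is: (1) reduce the congruence modulo $\omega_{n,j}(\gamma)$ to evaluating $\Phi_{p^k}$ at $u^{j-i}\xi$ over $p^n$-th roots of unity $\xi$; (2) use $k\ge n+1$ to kill $\xi$ inside the $p^{k-1}$-th power, collapsing the expression to the constant $\Phi_p(u^{(j-i)p^{k-1}})/p$; (3) set $w=u^{(j-i)p^{k-1}}$, observe via Lemma~\ref{lem:evaluation} that $v_p(w-1)\ge n$ (this is where $k-1\ge n$ enters a second time); (4) apply Lemma~\ref{lem:evaluation} once more to $w^p-1$ and divide, obtaining $\Phi_p(w)/p=\dfrac{w^p-1}{p(w-1)}\in 1+p^n\Zp$, which is the desired $\delta$. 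The main obstacle I anticipate is bookkeeping the valuation of $w-1$ correctly when $p\mid (j-i)$ (so that $(j-i)p^{k-1}$ contributes extra powers of $p$, only helping) versus $p\nmid(j-i)$ (the tight case, where $v_p(w-1)=k$ exactly, still $\ge n+1$), and making sure the two applications of Lemma~\ref{lem:evaluation} are chained with the correct refined-congruence statement so that the final membership is genuinely in $1+p^n\Zp$ and not merely $1+p^{n-c}\Zp$ for some slippage $c$; but since all the slack is on the favourable side ($k\ge n+1$ gives a comfortable margin), this should be routine.
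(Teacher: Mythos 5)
Your proof follows the same outline as the paper's: reduce modulo $\omega_{n,j}(\gamma)$ using $k-1\ge n$ to collapse $\Phi_{p^k}(u^{-i}\gamma)/p$ to the constant $\Phi_p(u^{(j-i)p^{k-1}})/p$, then estimate $p$-adically. Where you diverge is in the final estimate. The paper writes this constant as $\frac{(\alpha^{p^k}-1)/p^{k+1}\beta}{(\alpha^{p^{k-1}}-1)/p^k\beta}$ with $\alpha=u^{j-i}$ and invokes the congruence part of Lemma~\ref{lem:evaluation} for both exponents $p^k$ and $p^{k-1}$, asking the reader to compare the two truncated series. You instead set $w=u^{(j-i)p^{k-1}}$, use only the \emph{valuation} part of Lemma~\ref{lem:evaluation} to see $v_p(w-1)=k+v_p\bigl((u^{j-i}-1)/p\bigr)\ge k\ge n+1$, and then argue that $\Phi_p(w)/p\in 1+p^n\Zp$ directly. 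This is a genuinely cleaner route: it avoids the delicate comparison of two series expansions and makes the role of the hypothesis $k\ge n+1$ transparent. One small caution: your phrase ``directly from the $\bmod\,p^n\Zp$ refinement'' of Lemma~\ref{lem:evaluation} is not quite what's available — applying that lemma with exponent $p^1$ only yields a congruence $\bmod\,p$. The membership you actually need, $\Phi_p(w)/p-1\in(w-1)\Zp$, is easy to prove directly without the lemma: $\Phi_p(w)-p=\sum_{\ell=1}^{p-1}(w^\ell-1)=(w-1)\sum_{\ell=1}^{p-1}(1+w+\cdots+w^{\ell-1})$, and the last sum lies in $\tbinom{p}{2}+(w-1)\Zp\subset p\Zp$, so $\Phi_p(w)-p\in p(w-1)\Zp$ and hence $\Phi_p(w)/p\in 1+(w-1)\Zp\subset 1+p^{n+1}\Zp$. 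With that one step made explicit, your argument is complete and, if anything, tighter than the paper's.
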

\begin{proof}
Since $\omega_{n,j}=(u^{-j}\gamma)^{p^n}-1$, we have 
\[
\frac{\Phi_{p^k}(u^{-i}\gamma)}{p}=\frac{(u^{-i}\gamma)^{p^{k}}-1}{p((u^{-i}\gamma)^{p^{k-1}}-1)}\equiv \frac{(u^{j-i})^{p^{k}}-1}{p((u^{j-i})^{p^{k-1}}-1)}\mod \omega_{n,j}
\]
as $k-1\ge n$. If $u=1+p\tau$, where $\tau\in\Zp$, then $u^{j-i}=1+p(j-i)\tau+O(p^2)=1+p\beta$ for some $\beta\in\Zp$. Therefore, if we write $\alpha=u^{j-i}$, Lemma~\ref{lem:evaluation} implies that the last expression satisfies
\[
\frac{(1+p\tau)^{(j-i)p^{k}}-1}{p(((1+p\tau)^{(j-i)p^{k-1}}-1)}=\frac{\alpha^{p^{k}}-1}{p(\alpha^{p^{k-1}}-1)}=\frac{(\alpha^{p^{k}}-1)/p^{k+1}\beta}{(\alpha^{p^{k-1}}-1)/p^k\beta}\in 1+p^n\Zp,
\]
as required.
\end{proof}

\begin{lem}\label{lem:modulounit}
Let $i,j\in\ZZ$ and $n\ge1$. Then, $\ell_i$ is congruent to the constant $j-i$ modulo $­\omega_{n,j}$.
\end{lem}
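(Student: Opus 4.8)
The plan is to unwind the definition $\ell_i = \frac{\log(\gamma)}{\log(u)} - i = \frac{\log(u^{-i}\gamma)}{\log(u)}$ and reduce everything modulo $\omega_{n,j}(\gamma) = (u^{-j}\gamma)^{p^n}-1$, exploiting that working modulo $\omega_{n,j}$ amounts to evaluating at $u^{-j}\gamma = \zeta$ for a $p^n$-th root of unity $\zeta$ (or more precisely imposing $(u^{-j}\gamma)^{p^n}=1$). First I would write $u^{-i}\gamma = u^{j-i}\cdot(u^{-j}\gamma)$, so that $\log(u^{-i}\gamma) = (j-i)\log(u) + \log(u^{-j}\gamma)$, whence
\[
\ell_i = (j-i) + \frac{\log(u^{-j}\gamma)}{\log(u)}.
\]
Thus it suffices to show that $\log(u^{-j}\gamma)$, equivalently $\log\bigl(1 + ((u^{-j}\gamma)-1)\bigr)$, is divisible by $\omega_{n,j}(\gamma) = (u^{-j}\gamma)^{p^n}-1$ in $\calH_F$, since then $\frac{\log(u^{-j}\gamma)}{\log(u)} \equiv 0 \bmod \omega_{n,j}(\gamma)\calH_F$.

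The key step is therefore the divisibility $\bigl((u^{-j}\gamma)^{p^n}-1\bigr) \mid \log(u^{-j}\gamma)$ over $\calH_F$. Setting $\eta = u^{-j}\gamma$ (a group-like element, so $\log\eta$ makes sense in $\calH_F$), this is the statement that the power series $\log(1+(\eta-1))$ vanishes at every $p^n$-th root of unity $\zeta$, i.e.\ whenever $\eta^{p^n}=1$. But if $\eta^{p^n}=1$ then $\eta$ is a root of unity of order dividing $p^n$, and the convergent $p$-adic logarithm satisfies $\log(\zeta)=0$ for any $p$-power root of unity $\zeta$ (this is the standard fact that $\log$ kills torsion in $1+\mathfrak{m}$; one can see it from $p^n\log(\zeta)=\log(\zeta^{p^n})=\log(1)=0$ together with the torsion-freeness of the target, or directly from the $p$-adic analytic continuation of $\log$). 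Since $(u^{-j}\gamma)^{p^n}-1$ is a monic polynomial in $\gamma-1$ with distinct simple roots (the shifted $p^n$-th roots of unity), vanishing at all of them forces divisibility in $\calH_F$ — here I would invoke the Weierstrass preparation/division available in $\calH_F$ (each $\Delta$-isotypic component is a power series over $\Qp$ convergent on the open unit disk, and $\omega_{n,j}(\gamma)$ has all its roots in the open disk).

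I expect the main obstacle to be purely bookkeeping: making sure the convergence of $\log(u^{-j}\gamma)$ on the open unit disk is legitimate (which it is, since $u^{-j}\gamma - 1 \in p\Zp[[\gamma-1]]$ up to the $\Delta$-action, so $\log$ lands in $\calH_F$), and that the division by a monic polynomial with simple zeros all lying in the open disk is valid inside $\calH_F$ rather than merely in a completed local ring. Once those two points are in place, the congruence $\ell_i \equiv j - i \bmod \omega_{n,j}(\gamma)$ follows immediately from the displayed decomposition of $\ell_i$ above. (As a sanity check: specializing $j = i$ recovers $\omega_{n,i}(\gamma) \mid \ell_i$, i.e.\ $\ell_i$ vanishes at the $p^n$-th roots of unity twisted by $u^i$, which is the well-known fact that $\ell_i = \frac{\log(u^{-i}\gamma)}{\log u}$ has a simple zero at each such point — consistent with the factorization $\ell_i = \frac{1}{\log u}\sum_{m\ge 0}\frac{(-1)^m}{m+1}\bigl((u^{-i}\gamma)-1\bigr)^{m+1}$ being divisible by $(u^{-i}\gamma)-1$, hence by $\omega_{n,i}$ after noting the higher roots come from the $p^k$-factors.)
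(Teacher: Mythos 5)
Your proof is correct and matches the paper's one-line argument: the paper also splits $\ell_i = (j-i) + \ell_j$ (writing it as $\ell_i - \ell_j = j-i$) and invokes $\omega_{n,j}\mid\ell_j$, which it treats as a known fact; you simply fill in the justification of that divisibility (the $p$-adic logarithm vanishes at the twisted $p^n$-th roots of unity, and $\omega_{n,j}$ is monic with simple roots in the open disk).
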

\begin{proof}
By definition,
\[
\ell_i-\ell_j=j-i
\]
and $\omega_{n,j}|\ell_j$. Hence the result.
\end{proof}

\section{$Q$-systems for crystalline representations}\label{S:congruence}

In \cite{kobayashi13,kobayashi14}, Kobayashi proved the $p$-adic Gross-Zagier formula for elliptic curves and weight 2 modular forms with non-ordinary reduction at $p$. One of the key ingredients is the interpolation of Heegner points. More specifically, let $E$ be an elliptic curve with good supersingular reduction at $p$ with $a_p$ the trace of the Frobenius at $p$ on the $p$-adic Tate module.  Write $\fM_n$ for the maximal ideal of $\Qp(\mu_{p^n})$. If $c_n\in \hat{E}(\fM_n)$, $n\ge1$ is a  family of elements such that
\begin{equation}
\Tr_{n+2/n}(c_{n+2})-a_p\Tr_{n+1/n}(c_{n+1})+pc_n=0
\label{eq:tracerelation}
\end{equation}
for all $n\ge1$, Kobayashi showed that there exists a power series $f\in\Zp[[X]]$ such that $f(\zeta_{p^n}-1)=c_n$ for all $n\ge1$ if and only if $c_{n+1}^p\equiv c_n\mod p\calO_{\Qp(\mu_{p^{n+1}})}$. A family of points satisfying \eqref{eq:tracerelation} is called a $Q$-system, where $Q$ is the polynomial $X^2-a_pX+p$. They have been extensively studied by Knospe in \cite{knospe}. The result of Kobayashi mentioned above has been generalized to general formal groups by Ota in \cite{ota14}. In this appendix, we explain how the results of Perrin-Riou on the solutions to ($E_g$) discussed in \S\ref{S:Eg} allows us to construct $Q$-systems for a general crystalline representation.

Let $T$ be a representation as defined in \S\ref{S:crystalline} with Hodge-Tate weights in $[a;b]$ with $b\ge1$. As before, we assume that both (H.eigen) and (H.FL) hold.
\begin{defn}
Let $k\ge1$ be an integer. We say that $Q(X)\in\calO_F[X]$ is a $k$-polynomial for $T$ if it satisfies
\[
Q(p^k\vp)\Dcris(T)=0.
\]
\end{defn}

\begin{defn}
Let $Q=\sum_{i=0}^Na_iX^i\in\calO_F[X]$ be a $k$-polynomial for $T$ for some integer $k$. We define the $Q$-systems of $T$ to be the set of elements
\[
\left\{(c_n)_{n\ge1}:c_n\in H^1_f(F_n,T),\sum_{i=0}^da_i\cor_{n+i/n}\left(c_{n+i}\right)=0\ \forall n\ge1\right\}.
\]
\end{defn}

\begin{lem}\label{lem:bpoly}
A non-trivial $b$-polynomial for $T$ exists.
\end{lem}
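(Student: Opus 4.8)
The plan is to produce a polynomial $Q(X)\in\calO_F[X]$, non-trivial, with $Q(p^b\vp)\Dcris(T)=0$, by exhibiting a $\vp$-stable $\calO_F$-lattice on which $p^b\vp$ acts integrally and then invoking the Cayley--Hamilton theorem. The key observation, already recorded in \S\ref{S:Eg}, is that (H.FL) gives $p^b\vp\,\Dcris(T)\subset\Dcris(T)$; so $p^b\vp$ is an $\calO_F$-linear (more precisely $\sigma$-semilinear, see below) endomorphism of the finite free $\calO_F$-module $\Dcris(T)$. The characteristic polynomial of this endomorphism, computed with respect to the fixed basis $v_1,\dots,v_d$, will be the desired $Q$, once we deal with the semilinearity of $\vp$.

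First I would pin down the semilinearity issue. The Frobenius $\vp$ on $\Dcris(T)$ is $\sigma$-semilinear over $\calO_F$, where $\sigma$ is the absolute Frobenius of the unramified extension $F/\Qp$; writing $\Psi=p^b\vp$, the matrix $B\in M_d(\calO_F)$ of $\Psi$ in the basis $v_1,\dots,v_d$ satisfies $\Psi(\sum x_i v_i)=\sum(\sigma(x_i))\,(\text{$i$-th row of }B)\cdot v$, and (H.FL) guarantees $B\in M_d(\calO_F)$. To obtain a genuine polynomial identity I would pass to the $\sigma$-linearized iterate: set $\Psi^{[f]}:=\Psi^f$ where $f=[F:\Qp]$, which is $\sigma^f=\mathrm{id}$-semilinear, hence $\calO_F$-linear; its matrix is the ``$\sigma$-twisted product'' $B\,\sigma(B)\cdots\sigma^{f-1}(B)\in M_d(\calO_F)$. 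Applying Cayley--Hamilton to this matrix gives a monic $R(Y)\in\calO_F[Y]$ with $R(\Psi^f)\Dcris(T)=0$. Then $Q(X):=R(X^f)\in\calO_F[X]$ is monic, non-zero, and satisfies $Q(p^b\vp)\Dcris(T)=Q(\Psi)\Dcris(T)=R(\Psi^f)\Dcris(T)=0$, which is exactly a non-trivial $b$-polynomial for $T$. (If one prefers, one can avoid the linearization by working with the minimal polynomial over $\Qp$ of $\vp$ acting $\Qp$-linearly on $\Dcris(V)$, clearing denominators; but the lattice argument above keeps everything over $\calO_F$ with integral coefficients, which is what the definition asks for.)

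The main (and essentially only) obstacle is the semilinearity bookkeeping just described: making sure that ``Cayley--Hamilton'' is applied to an honestly $\calO_F$-linear operator, so that the resulting relation is a polynomial identity in $p^b\vp$ rather than merely an $F$-vector-space statement. Once that is handled, non-triviality is immediate (the polynomial $R$, hence $Q$, is monic of positive degree, so not the zero polynomial), and integrality of the coefficients is automatic from $B\in M_d(\calO_F)$ and (H.FL). I would close by remarking that the minimal such $b$-polynomial divides $Q$ and is the relevant object for the definition of $Q$-systems in the rest of the appendix.
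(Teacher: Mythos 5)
Your proof is correct and takes essentially the same route as the paper: both arguments linearize by passing to the $\nu$-th power of the semilinear Frobenius (with $\nu=[F:\Qp]$, so $\sigma^\nu=\mathrm{id}$) and then invoke Cayley--Hamilton to obtain a monic polynomial in $\calO_F[X]$ annihilating $p^b\vp$. The only cosmetic difference is that the paper phrases the integrality step by twisting to $\Dcris(T(-b))$, on which $\vp$ itself is $\calO_F$-integral, rather than working directly with $p^b\vp$ on $\Dcris(T)$ as you do --- the two are identical under the identification $\Dcris(T(-b))=\Dcris(T)\cdot t^b e_{-b}$.
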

\begin{proof}
Since $\Fil^{-b}\Dcris(T)=\Dcris(T)$, we have $\Fil^0\Dcris(T(-b))=\Dcris(T(-b))$. In particular, by Fontaine-Laffaille theory, $\vp(\Dcris(T(-b)))\subset \Dcris(T(-b))$.  We have the polynomial
\[
Q_T(X)=\det(X^\nu-\vp^\nu|\Dcris(T(-b)))\in\calO_F[X],
\]
where $p^\nu$ is the size of the residue field of $F$. Since $\Dcris(T)=\Dcris(T(-b))\otimes t^{-b}e_b$ and $\vp(t^{-b})=p^{-b}t^{-b}$, we have
\[
Q_T(p^b\vp)\Dcris(T)=Q_T(\vp)\Dcris(T(-b))=0.
\]
So, we see that $Q_T$ is a $b$-polynomial for $T$.
\end{proof}

We fix a $b$-polynomial $Q(X)$ of $T$. Note that $Q(X)$ is then automatically a $b-m$ polynomial for $T(-m)$ for all $m\in\{0,1,\ldots, b-1\}$ (this follows from the fact that $\vp(t^m)=p^mt^m$). In addition, we take $s$ to be the integer satisfying \eqref{eq:eigen} and \eqref{eq:newlattice} (for any fixed $r$). 
 We explain how to construct families of $Q$-system of $T(-m)$ using the results of Perrin-Riou that we have reviewed in \S\ref{S:PR}.

\begin{prop}\label{prop:PRuniversal}
Let $Q$ be a $b$-polynomial for $T$, $g\in(\AQp)^{\psi=0}\otimes\Dcris(T)$ and $m$ an integer such that $0\le m\le b-1$, then $\left(p^{r+s-1}\Sigma_{T(-m),n}\circ\Tw_{-m}(g)\right)_{n\ge1}$ form a $Q$-system of $T(-m)$ for all $m\in\{0,1,\ldots, b-1\}$.
\end{prop}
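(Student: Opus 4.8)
The assertion is that the sequence $\left(p^{r+s-1}\Sigma_{T(-m),n}\circ\Tw_{-m}(g)\right)_{n\ge1}$ lands in $H^1_f(F_n,T(-m))$ and satisfies the $Q$-system recursion. The proof splits into two independent verifications: integrality (so that each term actually lies in $H^1_f(F_n,T(-m))$, not just in $H^1_f(F_n,V(-m))$), and the corestriction relation.

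First I would settle integrality. Since $Q$ is a $b$-polynomial for $T$, it is a $(b-m)$-polynomial for $T(-m)$, and $b-m\ge 1$; write $T'=T(-m)$, whose Hodge-Tate weights lie in $[a-m;b-m]$ with top weight $b-m\ge 1$. One checks that $\Tw_{-m}(g)\in(\AQp)^{\psi=0}\otimes\Dcris(T')$ (this is immediate from the definition of $\Tw_{-m}$ together with \eqref{eq:twistDieudonne}). Now pick an integer $r'$ with $1\le r'\le b-m$ bounding the slopes of $\vp$ on $\Dcris(T')$ from below, and note that one may take $r'\le r$ and that the constant $s$ of \eqref{eq:eigen}--\eqref{eq:newlattice} for $T$ also works for $T'$ (the lattice $\Dcris(T')=\Dcris(T)\cdot t^m e_{-m}$ differs from $\Dcris(T)$ only by the unit scaling coming from $t^m$, on which $\vp$ acts by $p^m$, and $p^m\vp$ has the same or larger slopes). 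Then Corollary~\ref{cor:integralexp} applied to $T'$ gives
\[
p^{(r'-(b-m))n}\Sigma_{T',n}\big(\Tw_{-m}(g)\big)\in p^{1-r'-s}H^1_f(F_n,T').
\]
Since $r'\le r$, $(b-m)\ge 1$, and $n\ge 1$, the exponent $(r'-(b-m))n\le (r-1)n$, and multiplying by $p^{r+s-1}$ absorbs both the factor $p^{1-r'-s}$ (as $r'\le r$, $s\ge s$) and the negative power $p^{(r'-(b-m))n}$ (as $r'-(b-m)\le 0$). Hence $p^{r+s-1}\Sigma_{T',n}(\Tw_{-m}(g))\in H^1_f(F_n,T')$. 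I should double-check the bookkeeping of which $n\ge 1$ range is needed and that $r+s-1\ge 0$ (true since $r\ge 1$, $s\ge 0$).

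Next, the $Q$-system recursion. The key input is the relation in Proposition~\ref{prop:IntegralPR}: $\cor_{n+1/n}\Sigma_{T',n+1}(h)=p^{b-m}\Sigma_{T',n}(1\otimes\vp(h))$ for $h\in(\AQp)^{\psi=0}\otimes\Dcris(T')$ (applied with $b$ replaced by the top weight $b-m$ of $T'$). Iterating, $\cor_{n+i/n}\Sigma_{T',n+i}(h)=p^{(b-m)i}\Sigma_{T',n}(1\otimes\vp^i(h))$. Writing $Q=\sum_{i=0}^N a_i X^i$, linearity of $\Sigma_{T',n}$ gives
\[
\sum_{i=0}^N a_i\,\cor_{n+i/n}\Sigma_{T',n+i}(h)
=\Sigma_{T',n}\!\Big(1\otimes\sum_{i=0}^N a_i p^{(b-m)i}\vp^i(h)\Big)
=\Sigma_{T',n}\big(1\otimes Q(p^{b-m}\vp)(h)\big).
\]
But $Q(p^{b-m}\vp)$ annihilates $\Dcris(T')$, hence annihilates $(\AQp)^{\psi=0}\otimes\Dcris(T')$ (the operator acts only on the $\Dcris$-factor), so the right-hand side is zero. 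Multiplying through by the fixed constant $p^{r+s-1}$ and taking $h=\Tw_{-m}(g)$ yields exactly the defining relation of a $Q$-system.

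**Main obstacle.** The recursion step is essentially formal once Proposition~\ref{prop:IntegralPR} is in hand; the genuine subtlety is the integrality bookkeeping — verifying that the single constant $p^{r+s-1}$, independent of $n$, simultaneously clears the denominators of $\Sigma_{T(-m),n}$ for every $n\ge 1$ and every admissible $m$. This hinges on Corollary~\ref{cor:integralexp} providing a denominator bound of the shape $p^{(r-b)n}$ that grows (in the favorable direction) linearly in $n$ with the \emph{same} slope constant $r$ as the prefactor, so that the two cancel uniformly; I would want to be careful that the version of Corollary~\ref{cor:integralexp} for the twist $T(-m)$ really does go through with the same $r$ and $s$, since its proof invokes Proposition~\ref{prop:PR}(ii) which was stated for a representation with weights in $[a;b]$, and one must confirm the argument is insensitive to replacing $T$ by $T(-m)$ (it is, because the twist only rescales $\Dcris$ by the unit $t^m$ on which $\vp$ scales by $p^m$, preserving the relevant lattice inclusions).
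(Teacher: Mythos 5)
Your proof is correct and follows essentially the same route as the paper: integrality is obtained from Corollary~\ref{cor:integralexp} applied to the twist $T(-m)$ (whose top Hodge--Tate weight is $b-m$), and the $Q$-system relation follows from iterating the corestriction identity of Proposition~\ref{prop:IntegralPR} together with the observation that $Q(p^{b-m}\vp)$ annihilates $\Dcris(T(-m))$. You are, if anything, more careful than the paper about tracking how the slope and denominator constants $r'$, $s'$ for the twisted representation compare with $r$, $s$; the paper simply asserts the membership $c_n\in H^1_f(F_n,T(-m))$ as a direct consequence of Corollary~\ref{cor:integralexp} without spelling out that bookkeeping.
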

\begin{proof}
Let $c_n=p^{r+s-1}\Sigma_{T(-m),n}\circ\Tw_{-m}(g)$ for $n\ge1$ and write $Q(X)=\sum_{i=0}^{N}a_iX^i$. Note that \[\Tw_{-m}(g)\in (\AQp)^{\psi=0}\otimes\Dcris(T(-m))\] and that $T(-m)$ has Hodge-Tate weights in $[-m;b-m]$. So, by Corollary~\ref{cor:integralexp}, we have
\[
c_n\in H^1_f(F_n,T(-m))
\]
and
\begin{equation} \label{eq:expression}
\sum_{i=0}^{N}a_i\cor_{n+i/n}(c_{n+i})
=p^{r+s-1}\Sigma_{T(-m),n}\circ\left(\sum_{i=0}^{N} a_i (p^{b-m}\otimes \vp)^i\right) \circ \Tw_{-m}(g).
\end{equation}

Since $Q$ is a $(b-m)$-polynomial for $T(-m)$, we have
\[
\sum_{i=0}^Na_i(p^{b-m}\vp)^i=0
\]
on $\Dcris(T(-m))$.
Hence the right-hand side of \eqref{eq:expression} is $0$.
\end{proof}

%\begin{lem}
%Given any element $x\in (\Brig\otimes\Dcris(T))^{\psi=1}$, we have
%\[
%Q_T(\psi\otimes p^b)x=0.
%\]
%\end{lem}
%\begin{proof}
%If $\psi(x)=x$, then
%\[
%(\psi^n\otimes p^{bn})x=(1\otimes(p^b\vp)^n)\psi\circ\psi^n(x)=(1\otimes(p^b\vp)^n)(x).
%\]
%\end{proof}

We now show that it is possible to refine the construction above by taking into account the slope of $\vp$. In particular, we fix an integer $1\le r\le b$ such that the slopes of $\vp$ on $\Dcris(T)$ are $\ge -r$.

\begin{lem}
A non-trivial $r$-polynomial for $T$ exists. 
\end{lem}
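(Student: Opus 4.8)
The plan is to mimic the proof of Lemma~\ref{lem:bpoly}, but work with a Tate twist that isolates the slope bound $-r$ rather than the Hodge--Tate weight bound $b$. Since the slopes of $\vp$ on $\Dcris(T)$ are all $\ge -r$, the operator $p^r\vp$ has all slopes $\ge 0$, i.e.\ $p^r\vp$ is ``integral'' in the sense that its characteristic polynomial over $F$ has coefficients with non-negative valuation. More precisely, I would consider the $\sigma^\nu$-linear operator $(p^r\vp)^\nu$ acting $F$-linearly on $\Dcris(T)$ (where $p^\nu$ is the size of the residue field of $F$, so that $\sigma^\nu = \mathrm{id}$ on $F$), and form its characteristic polynomial. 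The key point to check is that this characteristic polynomial has coefficients in $\calO_F$, not merely in $F$.

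The main steps, in order: First, recall from \eqref{eq:eigen} that $(p^r\vp)^k\Dcris(T)\subset p^{-s}\Dcris(T)$ for all $k\ge0$ with a \emph{fixed} $s$ independent of $k$; this says precisely that the $\calO_F$-lattice $\sum_{k\ge 0}(p^r\vp)^k(\calO_F v_1 + \cdots + \calO_F v_d)$ is finitely generated, hence is a $(p^r\vp)$-stable $\calO_F$-lattice $\Lambda$ in $\Dcris(T)$ (it is contained in $p^{-s}\Dcris(T)$ and contains $\Dcris(T)$, so it is a lattice). Second, the $\sigma^\nu$-linear — now $\calO_F$-linear — endomorphism $(p^r\vp)^\nu$ preserves $\Lambda$, so its characteristic polynomial
\[
Q_T^{(r)}(X) := \det\!\big(X^\nu - \vp^\nu \mid \Lambda[1/p]\big) = \det\!\big(X - p^{r\nu}\vp^\nu \mid \Lambda\big)\big|_{X \mapsto \text{(suitably normalized)}}
\]
has coefficients in $\calO_F$, being the characteristic polynomial of an $\calO_F$-linear operator on a free $\calO_F$-module. (I would phrase this cleanly as: let $Q_T^{(r)}(X) = \det(X^\nu - \psi \mid \Lambda)$ where $\psi = (p^r\vp)^\nu$ is $\calO_F$-linear on $\Lambda$; then $Q_T^{(r)} \in \calO_F[X]$ and, by Cayley--Hamilton, $Q_T^{(r)}(p^r\vp)$ annihilates $\Lambda$, hence annihilates $\Dcris(T)$.) Third, $Q_T^{(r)}$ is non-trivial (monic of degree $\nu d$, say), so it is the required $r$-polynomial.

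The only genuine subtlety — and the step I would expect to need the most care — is the $\sigma$-semilinearity of $\vp$: since $\vp$ is $\sigma$-linear rather than $\calO_F$-linear, one cannot directly take ``$\det(X - p^r\vp)$'' over $\calO_F$. The standard fix, exactly as in the proof of Lemma~\ref{lem:bpoly} where the polynomial is $\det(X^\nu - \vp^\nu \mid \Dcris(T(-b)))$, is to pass to the $\nu$-th iterate $(p^r\vp)^\nu$, which \emph{is} $\calO_F$-linear because $\sigma^\nu$ acts trivially on $F$ (recall $F/\Qp$ is unramified of some degree, and $\sigma$ generates its Galois group — here $\nu$ is the residue degree, equivalently $[F:\Qp]$). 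Then Cayley--Hamilton applies over $\calO_F$ on the stable lattice $\Lambda$, and since $X^\nu - p^{r\nu}\vp^\nu = \prod (X - \zeta p^r\vp)$-type factorizations relate $Q_T^{(r)}(p^r\vp)$ to $\det((p^r\vp)^\nu - (p^r\vp)^\nu) = 0$ on $\Dcris(T)$, one concludes $Q_T^{(r)}(p^r\vp)\Dcris(T) = 0$. Everything else is routine bookkeeping, parallel to Lemma~\ref{lem:bpoly}.
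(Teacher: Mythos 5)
Your proof is correct, but takes a different route from the paper's. The paper twists: since $\Dcris(T(-r))=\Dcris(T)\cdot t^re_{-r}$ and $\vp(t^re_{-r})=p^rt^re_{-r}$, the action of $\vp$ on $\Dcris(T(-r))$ matches that of $p^r\vp$ on $\Dcris(T)$; the slope hypothesis then makes the eigenvalues of $\vp^\nu$ on $\Dcris(T(-r))$ $p$-integral, and the paper concludes at once that $\QTr(X)=\det(X^\nu-\vp^\nu\,|\,\Dcris(T(-r)))$ lies in $\calO_F[X]$ because a monic polynomial over $F$ with $p$-integral roots has $p$-integral (hence $\calO_F$-) coefficients. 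You instead build an explicit $(p^r\vp)$-stable lattice $\Lambda=\sum_{k\ge0}(p^r\vp)^k\Dcris(T)\subset p^{-s}\Dcris(T)$ from \eqref{eq:eigen} and run Cayley--Hamilton on the $\calO_F$-linear operator $(p^r\vp)^\nu$ acting on $\Lambda$. Both arguments produce the \emph{same} polynomial --- the characteristic polynomial of $(p^r\vp)^\nu$ on $\Dcris(V)$ --- and differ only in how they certify $\calO_F$-integrality of the coefficients: the paper via the symmetric-function observation, you via a stable lattice. Your route is slightly more constructive and reuses \eqref{eq:eigen} directly; the paper's is shorter but leaves the ``integral eigenvalues $\Rightarrow$ integral coefficients'' step implicit. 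Two small glitches in your write-up worth fixing: $\Lambda$ is a lattice in $\Dcris(V)$, not in $\Dcris(T)$ (it strictly contains $\Dcris(T)$ in general), and the intermediate display with the placeholder ``(suitably normalized)'' should simply be deleted in favour of your clean reformulation $Q_T^{(r)}(X)=\det(X^\nu-(p^r\vp)^\nu\,|\,\Lambda)$, which is the right statement.
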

\begin{proof}
Since the eigenvalues of $\vp$ on $\Dcris(T(-r))$ are all $p$-integral, the polynomial
\[
\QTr(X)=\det(X^\nu-\vp^\nu|\Dcris(T(-r))),
\]
where $p^\nu$ is the size of the residue field of $F$, is a polynomial defined over $\calO_F$ and hence it is a $r$-polynomial for $T$. 
\end{proof}

Note that if $Q_T(X)=\sum_{i=0}^{\nu d}a_iX^i$ is the polynomial defined in the proof of Lemma~\ref{lem:bpoly}, then it is related to the polynomial $\QTr$ via the relation\[\QTr(X)=\sum_{i=0}^{\nu d}\frac{a_i}{p^{(b-r)(d-i)}}X^i.\]

We remark that as before, if $Q$ is a $r$-polynomial for $T$, then $Q$ is a $(r-m)$-polynomial for $T(-m)$ for all $m\in\{0,1,\ldots, r-1\}$.

\begin{prop}\label{prop:Qsystem}
Let $Q$ be a $r$-polynomial for $T$. If $g\in(\AQp)^{\psi=0}\otimes\Dcris(T)$, then the elements 
\[
p^{r+s-1+(r-b)n}\Sigma_{T(-m),n}\circ\Tw_{-m}(g)
\]
form a $Q$-system for $T(-m)$ for all $0\le m\le r-1$.
\end{prop}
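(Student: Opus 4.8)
The plan is to transcribe the argument for Proposition~\ref{prop:PRuniversal}, replacing the largest Hodge-Tate weight $b$ by the slope bound $r$ throughout; the factor $p^{(r-b)n}$ built into the definition of the classes is exactly what makes the power-of-$p$ bookkeeping close up. Fix $0\le m\le r-1$, write $Q(X)=\sum_{i=0}^N a_iX^i$, and set $c_n=p^{r+s-1+(r-b)n}\Sigma_{T(-m),n}\circ\Tw_{-m}(g)$ for $n\ge1$, where $s$ is the chosen integer satisfying \eqref{eq:eigen} and \eqref{eq:newlattice}.

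The first step is to record the relevant properties of the twist $T(-m)$. Since $\vp$ acts on $\Dcris(T(-m))=\Dcris(T)\cdot t^me_{-m}$ as $p^m\vp$, the slopes of $\vp$ on $\Dcris(T(-m))$ are all $\ge -(r-m)$, the largest Hodge-Tate weight of $T(-m)$ is $b-m$, and $T(-m)$ inherits (H.FL) and (H.eigen) from $T$; note $1\le r-m\le b-m$, so $r-m$ is an admissible slope bound for $T(-m)$. Moreover the integer $s$ still satisfies \eqref{eq:eigen} and \eqref{eq:newlattice} for the pair $(T(-m),r-m)$: indeed $(p^{r-m}\vp)^k$ on $\Dcris(T(-m))$ is identified with $(p^r\vp)^k$ on $\Dcris(T)$, while for $0\le j\le r-m$ the operator $(1-p^j\vp)^{-1}$ on $\Dcris(T(-m))$ is identified with $(1-p^{j+m}\vp)^{-1}$ on $\Dcris(T)$ with $m\le j+m\le r$. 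Applying Corollary~\ref{cor:integralexp} to $T(-m)$ with these data, and recalling that $\Tw_{-m}(g)\in(\AQp)^{\psi=0}\otimes\Dcris(T(-m))$, gives
\[
p^{(r-b)n}\Sigma_{T(-m),n}\circ\Tw_{-m}(g)\in p^{1-(r-m)-s}H^1_f(F_n,T(-m)),
\]
whence $c_n\in p^mH^1_f(F_n,T(-m))\subseteq H^1_f(F_n,T(-m))$, which is the first condition in the definition of a $Q$-system.

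For the recursion relation I would iterate the corestriction formula of Proposition~\ref{prop:IntegralPR}, applied to $T(-m)$ (largest Hodge-Tate weight $b-m$), namely $\cor_{k+1/k}\Sigma_{T(-m),k+1}(h)=p^{b-m}\Sigma_{T(-m),k}(1\otimes\vp(h))$. Combined with the identity $(r-b)(n+i)+(b-m)i=(r-b)n+(r-m)i$, this gives, for $n\ge1$ and $0\le i\le N$,
\[
\cor_{n+i/n}(c_{n+i})=p^{\,r+s-1+(r-b)n}\,\Sigma_{T(-m),n}\bigl((p^{r-m}\otimes\vp)^i\Tw_{-m}(g)\bigr).
\]
Summing over $i$ against the $a_i$, using additivity of $\Sigma_{T(-m),n}$ to pull out the common scalar $p^{r+s-1+(r-b)n}$, and using that $Q$ is an $(r-m)$-polynomial for $T(-m)$ — so that $\sum_{i=0}^N a_i(p^{r-m}\otimes\vp)^i$ acts on the $\Dcris(T(-m))$-factor as $Q(p^{r-m}\vp)=0$ — we obtain $\sum_{i=0}^N a_i\cor_{n+i/n}(c_{n+i})=0$, as required. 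The argument has no genuine obstacle beyond the previous section: the two steps needing care are the twisting computation of the second paragraph (that the same $s$ works for $(T(-m),r-m)$) and the exponent identity of the third, which together account for the term $(r-b)n$ in the normalization of $c_n$.
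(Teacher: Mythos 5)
Your proof is correct and is essentially the same argument as the paper's: both rest on Corollary~\ref{cor:integralexp} (applied to $T(-m)$ with the refined slope bound $r-m$) for the membership $c_n\in H^1_f(F_n,T(-m))$, and on the corestriction relation of Proposition~\ref{prop:IntegralPR} for the $Q$-recursion. The only cosmetic difference is that the paper reduces to Proposition~\ref{prop:PRuniversal} by introducing the auxiliary $b$-polynomial $R(X)=p^{(b-r)N}Q(p^{r-b}X)$ and then rescaling the resulting $R$-system by $p^{(r-b)n}$, whereas you unroll that reduction, iterating the corestriction formula directly and using the exponent identity $(r-b)(n+i)+(b-m)i=(r-b)n+(r-m)i$ so that $Q(p^{r-m}\vp)=0$ on $\Dcris(T(-m))$ kills the sum.
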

\begin{proof}Corollary~\ref{cor:integralexp} tells us that
\[
p^{r+s-1+(r-b)n}\Sigma_{T(-m),n}\circ\Tw_{-m}(g)\in H^1_f(F_n,T)
\]
for all $n\ge1$. Let $Q(X)=\sum_{i=0}^Na_iX^i$ and let $$R(X)=p^{(b-r)N}Q(p^{r-b}X)=p^{(b-r)N}\sum_{i=0}^Na_ip^{i(r-b)}X^i\in \calO_F[X].$$ Since $Q$ is a $r$-polynomial for $T$, $R$ is a $b$-polynomial for $T$. Let $c_n=p^{r+s-1}\Sigma_{T(-m),n}\circ\Tw_{-m}(g)$ as in the proof of Proposition~\ref{prop:PRuniversal}. Then, they form a $R$-system for $T(-m)$. This gives us the relation 
\[
p^{(b-r)N}\sum_{i=1}^Na_ip^{i(r-b)}\cor_{n+i/n}c_{n+i}=0.
\]
Therefore,
\[
\sum_{i=1}^Na_i\cor_{n+i/n}\left(p^{(n+i)(r-b)}c_{n+i}\right)=0
\]
as required.
\end{proof}

\bibliographystyle{amsalpha}
\bibliography{../references}

\end{document}